\documentclass{amsart}
\usepackage{latexsym,amsxtra,amscd,ifthen}
\usepackage{amsfonts}
\usepackage{verbatim}
\usepackage{amsmath}
\usepackage{amsthm}
\usepackage{amssymb}

\input xy \xyoption{matrix} \xyoption{arrow}\xyoption{frame}
\def\dropdown#1{\save+<0ex,-6ex>\drop{#1}\restore}
\def\dropdownmore#1{\save+<0ex,-10ex>\drop{#1}\restore}
\def\dropdownright#1{\save+<8ex,-6ex>\drop{#1}\restore}
\def\dropdownrightmore#1{\save+<8ex,-10ex>\drop{#1}\restore}
\newcommand{\edge}{\ar@{-}}
\newcommand{\place}{*{}+0}
\newcommand{\plb}{*{\bullet}+0}
\newcommand{\plc}{*{\circ}+0}

\numberwithin{equation}{section}

\theoremstyle{plain}
\newtheorem{theorem}{Theorem}[section]
\newtheorem{lemma}[theorem]{Lemma}
\newtheorem{proposition}[theorem]{Proposition}

\newtheorem{conjecture}[theorem]{Conjecture}

\theoremstyle{definition}
\newtheorem{definition}[theorem]{Definition}
\newtheorem{example}[theorem]{Example}

\newtheorem{remark}[theorem]{Remark}
\newtheorem*{remark*}{Remark}

\newtheorem{obs}[theorem]{Observation}

\newtheorem{assumps}[theorem]{Assumptions}
\newtheorem{subsec}[theorem]{} 

\newcommand{\calC}{{\mathcal C}}
\newcommand{\calE}{{\mathcal E}}
\newcommand{\calO}{{\mathcal O}}
\newcommand{\calS}{{\mathcal S}}

\renewcommand{\AA}{{\mathbb A}}

\newcommand{\Itil}{\widetilde{I}}
\newcommand{\Atil}{\widetilde{A}}
\newcommand{\Jtil}{\widetilde{J}}
\newcommand{\calEtil}{\widetilde{\calE}}

\newcommand{\Sbar}{\overline{S}}
\newcommand{\Xbar}{\overline{X}}
\newcommand{\Ybar}{\overline{Y}}

\newcommand{\Oq}{\calO_q}

\newcommand{\OqMn}{\Oq(M_n(k))}
\newcommand{\OqGLth}{\Oq(GL_3(k))}

\newcommand{\OqSLth}{\Oq(SL_3(k))}
\newcommand{\OqSLn}{\Oq(SL_n(k))}
\newcommand{\kx}{k^\times}
\newcommand\kbar{\overline{k}}

\newcommand\hightop{\overline{\,|\,}}

\DeclareMathOperator\ident{\operatorname{id}}

\newcommand{\gnoc}{\mathrel{{\lower.2ex\hbox{$\backsim$}}\llap{\raise.45ex\hbox{=}}}}

\newcommand{\Hspec}{H\text{-}\operatorname{spec}}

\DeclareMathOperator\spec{\operatorname{spec}}
\DeclareMathOperator\prim{\operatorname{prim}}
\DeclareMathOperator\Fract{\operatorname{Fract}}

\DeclareMathOperator\hgt{\operatorname{ht}}
\DeclareMathOperator\CL{\operatorname{CL}}
\DeclareMathOperator\rank{\operatorname{rank}}

\newcommand{\ccc}{\plc&\plc&\plc}
\newcommand{\bcc}{\plb&\plc&\plc}

\newcommand{\ccb}{\plc&\plc&\plb}
\newcommand{\bbc}{\plb&\plb&\plc}
\newcommand{\bcb}{\plb&\plc&\plb}
\newcommand{\cbb}{\plc&\plb&\plb}

\newcommand{\hrz}{\place \edge[r] &\place}

\newcommand{\hrzvrt}{\place \edge[r] \edge[d] &\place \edge[d]}

\newcommand{\dropvarup}[2]{\save+<0ex,#1ex>\drop{#2}\restore}
\newcommand{\dropup}[1]{\save+<0ex,4ex>\drop{#1}\restore}

\newcommand{\dropgen}[3]{\save+<#1ex,#2ex>\drop{#3}\restore}

\begin{document}

\title[Zariski topologies on stratified spectra of quantum algebras] {Zariski topologies on stratified spectra of
quantum algebras}

\author[K.A. Brown]{K.A. Brown} \address{Department of Mathematics\\ University of Glasgow\\ Glasgow G12 8QW\\
Scotland.} \email{Ken.Brown@glasgow.ac.uk}

\author[K.R. Goodearl]{K.R. Goodearl} \address{Department of Mathematics\\ University of California\\ Santa Barbara,
CA 93106 \\ USA.} \email{goodearl@math.ucsb.edu}

\thanks{The research of the second-named author was supported by grant DMS-0800948 from the National Science Foundation (USA). Parts of this project were completed while the authors were in residence at the Mathematical Sciences Research Institute during its 2013 program on Noncommutative Algebraic Geometry and Representation 
Theory.}
\keywords{}
\subjclass[2010]{Primary 16T20; Secondary 20G42, 16U30}

\begin{abstract}
A framework is developed to describe the Zariski topologies on the prime and primitive spectra of a quantum algebra $A$ in terms of the (known) topologies on strata of these spaces and maps between the collections of closed sets of different strata. A conjecture is formulated, under which the desired maps would arise from homomorphisms between certain central subalgebras of localized factor algebras of $A$. When the conjecture holds, $\spec A$ and $\prim A$ are then determined, as topological spaces, by a finite collection of (classical) affine algebraic varieties and morphisms between them. The conjecture is verified for $\Oq(GL_2(k))$, $\OqSLth$, and $\Oq(M_2(k))$ when $q$ is a non-root of unity and the base field $k$ is algebraically closed.
\end{abstract}

\maketitle


\section{Introduction} \label{intro}

For many quantum algebras $A$, by which we mean quantized coordinate rings, quantized Weyl algebras, and related algebras, good piecewise pictures of the prime and primitive spectra are known. More precisely, in generic cases there are finite stratifications of these spectra, based on a rational action of an algebraic torus, such that each stratum is homeomorphic to the prime or primitive spectrum of a commutative Laurent polynomial ring. What is lacking is an understanding of how these strata are combined topologically, i.e., of the Zariski topologies on the full spaces $\spec A$ and $\prim A$. We develop a framework for the needed additional data, in terms of maps between the collections of closed sets of different strata, together with a conjecture stating how these maps should arise from homomorphisms between certain central subalgebras of localizations of factor algebras of $A$.

In the stratification picture just mentioned (see Theorem \ref{strat.thm} for details), each stratum is ``classical" in that it is homeomorphic to either a classical affine algebraic variety or the scheme of irreducible closed subvarieties of an affine variety. One would like $\spec A$ and $\prim A$ themselves to be fully describable in terms of classical data. This is a key aspect of our main goal: to formulate a conjectural picture which describes the topological spaces $\spec A$ and $\prim A$ in terms of completely classical data, namely a finite collection of affine varieties together with suitable morphisms between them. We verify this picture in three basic cases -- the generic quantized coordinate rings of the groups $GL_2(k)$ and $SL_3(k)$, and of the matrix variety $M_2(k)$.

Our analysis  of the described picture brings with it new structural information about the algebras $\OqSLth$ and $\Oq(M_2(k))$. All prime factor rings of these algebras are Auslander-Gorenstein and Cohen-Macaulay with respect to GK-dimension, and all but one of the factor rings modulo prime ideals invariant under the natural acting tori are noncommutative unique factorization domains in the sense of Chatters \cite{Cha}. The exceptional case gives an example of a noetherian domain (and maximal order) with infinitely many height $1$ prime ideals, all but exactly four of which are principal. This is a previously unobserved phenomenon, which does not occur in the commutative case \cite{Bouv}.
\medskip

Throughout, we work over an algebraically closed base field $k$, of arbitrary characteristic.


\section{Stratified topological data}
\label{strat.topo}

Determining the global Zariski topology on the prime or primitive spectrum of a quantum algebra, given knowledge of the subspace topologies on all strata, requires some relations between the topologies of different strata. We give such relations in terms of maps between collections of closed sets. An abstract framework for this data is developed in the present section.

We denote the closure of a set $S$ in a topological space by $\Sbar$.

\begin{definition} \label{finstrat} 
A \emph{finite stratification} of a topological space $T$ is a finite partition $T
= \bigsqcup\, \{ S \in \calS \}$ such that 
\begin{enumerate} 
\item Each set in $\calS$ is a nonempty locally closed
subset of $T$. 
\item The closure of each set in $\calS$ is a union of sets from $\calS$. 
\end{enumerate} 
In this
setting, we define a relation $\le$ on $\calS$ by the rule 
\begin{equation}  \label{S2S1bar} 
S \le S' \quad\iff\quad
S' \subseteq \Sbar\,, 
\end{equation} 
and we observe as follows that $\le$ is a partial order. Reflexivity and
transitivity are clear. If $S_1, S_2 \in \calS$ satisfy $S_1 \le S_2$ and $S_2 \le S_1$, then $\Sbar_1 = \Sbar_2$.
Inside this closed set, $S_1$ and $S_2$ are both dense and open (by condition (2)), so $S_1 \cap S_2 \ne \varnothing$,
and consequently $S_1 = S_2$.

In view of the above observation, it is convenient to present finite stratifications as partitions indexed by finite
posets. Consequently, we rewrite the definition in the following terms.

A \emph{finite stratification} of a topological space $T$ is a partition $T = \bigsqcup_{i\in \Pi} S_i$ such that
\begin{enumerate} 
\item[(3)] $\Pi$ is a finite poset. 
\item[(4)] Each $S_i$ (for $i\in \Pi$) is a nonempty locally
closed subset of $T$. 
\item[(5)] For each $i \in \Pi$, the closure of $S_i$ in $T$ is given by $\Sbar_i =
\bigsqcup_{\, j\in\Pi,\; j\ge i}
    S_j$.
\end{enumerate} 
Observe that the ordering on $\Pi$ matches that of \eqref{S2S1bar}. Namely, for $i,j \in \Pi$, we
have 
\begin{equation}  \label{SjSibar} 
i \le j \quad\iff\quad S_j \subseteq \Sbar_i\,. 
\end{equation}
\end{definition}

\begin{definition} \label{CLetc} 
We shall write $\CL(T)$ to denote the collection of all closed subsets of a
topological space $T$.

Suppose that $T = \bigsqcup_{\, i\in \Pi} S_i$ is a finite stratification of $T$. For $i<j$ in $\Pi$, define a map
$\phi_{ij} : \CL(S_i) \rightarrow \CL(S_j)$ by the rule 
$$\phi_{ij}(Y) = \Ybar \cap S_j\,.$$ 
(These maps can be
defined for any pair of elements $i,j \in \Pi$, but the cases in which $i \not< j$ will not be needed.) The family
$(\phi_{ij})_{i,j\in\Pi,\, i< j}$ will be referred to as the \emph{associated family of maps} for the given
stratification.
 \end{definition}

\begin{lemma} \label{topostrat} 
Let $T$ be a topological space with a finite stratification $T = \bigsqcup_{\, i\in \Pi}
S_i$, and let $(\phi_{ij})_{i,j\in\Pi,\, i< j}$ be the associated family of  maps.

{\rm(a)} Each $\phi_{ij}$ maps $\varnothing \mapsto \varnothing$ and $S_i \mapsto S_j$.

{\rm(b)} Each $\phi_{ij}$ preserves finite unions.

{\rm(c)} A subset $X\subseteq T$ is closed in $T$ if and only if 
\begin{enumerate} 
\item $X\cap S_i \in \CL(S_i)$ for
all $i \in \Pi$; and 
\item $\phi_{ij}(X\cap S_i) \subseteq X\cap S_j$ for all $i< j$ in $\Pi$.
\end{enumerate}
\end{lemma}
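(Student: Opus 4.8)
The plan is to prove the three parts essentially directly from the defining properties (4), (5) of a finite stratification together with elementary facts about closures in subspaces. For part (a), I would note first that $\phi_{ij}(\varnothing) = \overline{\varnothing}\cap S_j = \varnothing$. For $\phi_{ij}(S_i) = S_j$, observe that the closure of $S_i$ in $T$ is $\Sbar_i = \bigsqcup_{k\ge i}S_k$ by (5); since $i < j$ we have $j \ge i$, hence $S_j \subseteq \Sbar_i$, and therefore $\Sbar_i \cap S_j = S_j$. (A small point to be careful about: the closure $Y$ of a subset $Y\subseteq S_i$ inside the subspace $S_i$ satisfies $\overline{Y}^{S_i} = \overline{Y}^T \cap S_i$, so the closures in \ref{CLetc} are unambiguous; I would mention this once.)

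For part (b), finite unions are handled by the two cases: $\phi_{ij}(\varnothing) = \varnothing$ (the empty union), and $\phi_{ij}(Y\cup Z) = \overline{Y\cup Z}\cap S_j = (\overline{Y}\cup\overline{Z})\cap S_j = (\overline{Y}\cap S_j)\cup(\overline{Z}\cap S_j) = \phi_{ij}(Y)\cup\phi_{ij}(Z)$, using that closure in any topological space commutes with finite unions and that intersection distributes over union. Induction then gives arbitrary finite unions.

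Part (c) is the substantive one and is where I expect the only real work. For the forward direction, suppose $X$ is closed in $T$. Then $X\cap S_i$ is closed in the subspace $S_i$ for every $i$, giving condition (1). For condition (2), fix $i<j$ and set $Y = X\cap S_i$. Since $Y\subseteq X$ and $X$ is closed in $T$, we get $\overline{Y}^T \subseteq X$, hence $\phi_{ij}(Y) = \overline{Y}^T\cap S_j \subseteq X\cap S_j$. For the converse, suppose $X$ satisfies (1) and (2); I must show $X = \overline{X}^T$, i.e. $\overline{X}^T\subseteq X$. The key idea is to decompose along the stratification: $X = \bigsqcup_{i\in\Pi}(X\cap S_i)$, so $\overline{X}^T = \overline{\bigcup_i (X\cap S_i)}^T = \bigcup_i \overline{X\cap S_i}^T$. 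It therefore suffices to show $\overline{X\cap S_i}^T \subseteq X$ for each $i$. Write $Y_i = X\cap S_i$, which is closed in $S_i$ by (1), so $\overline{Y_i}^{S_i} = Y_i$. Now intersect $\overline{Y_i}^T$ with each stratum: $\overline{Y_i}^T = \bigsqcup_{k\in\Pi}\bigl(\overline{Y_i}^T\cap S_k\bigr)$. For $k = i$ this piece is $\overline{Y_i}^T\cap S_i = \overline{Y_i}^{S_i} = Y_i \subseteq X$. For $k\ne i$, note $\overline{Y_i}^T$ is contained in the $T$-closure of $S_i$, which is $\bigsqcup_{k\ge i}S_k$, so the only nonzero pieces occur for $k > i$, and there $\overline{Y_i}^T\cap S_k = \overline{Y_i}^{S_i}\cap S_k$... more precisely $\overline{Y_i}^T\cap S_k \subseteq \overline{X\cap S_i}^T\cap S_k$; but I want to compare this with $\phi_{ik}(Y_i)$. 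Indeed $\phi_{ik}(Y_i) = \overline{Y_i}^T\cap S_k$ by definition (here using that $\overline{Y_i}^T = \overline{\overline{Y_i}^{S_i}}^T = \overline{Y_i}^{S_i}$ closed-in-$S_i$ has the same $T$-closure), so by hypothesis (2), $\overline{Y_i}^T\cap S_k = \phi_{ik}(Y_i)\subseteq X\cap S_k\subseteq X$. Summing over $k$ gives $\overline{Y_i}^T\subseteq X$, completing the argument.

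The main obstacle, and the step to state with care, is the bookkeeping in the converse: one must be scrupulous about which closure is being taken (in $T$ versus in the subspace $S_i$), invoke the subspace-closure identity $\overline{Y}^{S_i} = \overline{Y}^T\cap S_i$, and use property (5) to control which strata $\overline{Y_i}^T$ can meet. Everything else is routine point-set topology.
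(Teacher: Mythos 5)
Your proof is correct and follows essentially the same route as the paper: closures of the pieces $X\cap S_i$ are intersected with each stratum, the $k=i$ piece is handled by condition (1), the $k>i$ pieces by condition (2), and property (5) (via \eqref{SjSibar}) rules out all other strata. The parenthetical about re-closing $\overline{Y_i}^{S_i}$ is unnecessary, since $\phi_{ik}(Y_i)$ is by definition $\overline{Y_i}\cap S_k$ with the closure taken in $T$, but this does not affect the argument.
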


\begin{proof} Statements (a) and (b) are clear.

(c) If $X$ is a closed subset of $T$, then (1) is obvious. As for (2): Given $i < j$ in $\Pi$, we see that
$$\phi_{ij}(X\cap S_i) = \overline{X\cap S_i} \cap S_j \subseteq \Xbar \cap \Sbar_i \cap S_j = X\cap S_j\,,$$ 
taking
account of \eqref{SjSibar}.

Conversely, let $X$ be a subset of $T$ for which (1) and (2) hold. Write $X= \bigsqcup_{\, i\in\Pi} X_i$, where $X_i :=
X\cap S_i$. By our assumptions, $X_i \in \CL(S_i)$ for all $i$ and $\phi_{ij}(X_i) \subseteq X_j$ for all $i< j$. Set
$Y := \bigcup_{\, i\in\Pi} \Xbar_i$, which is closed in $T$ because $\Pi$ is finite. Obviously, $X\subseteq Y$ and $Y =
\bigcup_{\, i,j\in\Pi} \Xbar_i \cap S_j$. Consider $i,j\in \Pi$ such that $\Xbar_i \cap S_j \ne \varnothing$. If $i=j$,
then $\Xbar_i \cap S_j = X_i \subseteq X$. Now assume that $i \ne j$. Then $\Sbar_i \cap S_j \ne \varnothing$, whence
$S_j \subseteq \Sbar_i$ and $i< j$ (by condition (5) of Definition \ref{finstrat}). Consequently, $\Xbar_i \cap S_j
= \phi_{ij}(X_i) \subseteq X_j \subseteq X$. We have now shown that $\Xbar_i \cap S_j \subseteq X$ for all $i,j \in
\Pi$, and thus $Y=X$. This shows that $X$ is closed in $T$, and completes the proof. 
\end{proof}

\begin{remark}  \label{topodata} 
We mention that data of the above kind can be used to construct topologies, as
follows. Suppose that $\Pi$ is a finite poset, $(S_i)_{i\in\Pi}$ is a family of topological spaces indexed by $\Pi$,
and maps $\phi_{ij}: \CL(S_i) \rightarrow \CL(S_j)$ are given for all $i< j$ in $\Pi$. Arrange for the spaces $S_i$
(or suitable copies of them) to be pairwise disjoint, and set $T := \bigsqcup_{\, i\in\Pi} S_i$. Assume that conditions
(a) and (b) of Lemma \ref{topostrat} hold, and let $\calC$ be the collection of those subsets $X$ of $T$ satisfying
conditions (c)(1), (c)(2) of the lemma. Then $\calC$ is the collection of closed sets for a topology on $T$, and the
partition $T = \bigsqcup_{\, i\in\Pi} S_i$ is a finite stratification. We leave the easy proof to the reader.
\end{remark}


\section{$H$-strata} \label{Hstrata} 

In this section, we review the toric stratifications of the spectra of quantum algebras and develop maps that, conjecturally, provide the data needed to invoke the framework of Section \ref{strat.topo}.

\begin{assumps}  \label{AHetc} 
In general, we will work with algebras $A$ and tori $H$ satisfying the following
conditions: 
\begin{enumerate} 
\item $A$ is a noetherian $k$-algebra, satisfying the noncommutative Nullstellensatz
over $k$. 
\item $H$ is a $k$-torus, acting rationally on $A$ by $k$-algebra automorphisms. \item $A$ has only finitely
many $H$-prime ideals. 
\end{enumerate}
See, e.g., \cite[\S9.1.4]{McRob} for the definition of the noncommutative Nullstellensatz
over $k$, and \cite[\S II.2]{BG} for a discussion of rational actions. 

It is standard to denote the set of all $H$-prime ideals (= $H$-stable prime ideals) of $A$ by $\Hspec A$. By
assumption (3), this set is finite, and we view it as a poset with respect to $\subseteq$. Thus, we will often take
$\Pi = \Hspec A$.

Recall that for $J \in \Hspec A$, the \emph{$J$-stratum} of $\spec A$ is the set 
$$\spec_J A := \{ P \in \spec A \mid
\bigcap_{h\in H} h.P = J \},$$ 
and the corresponding $J$-stratum in $\prim A$ is 
$$\prim_J A := \bigl( \spec_J A
\bigr) \cap \prim A.$$ 
These sets give finite stratifications of $\spec A$ and $\prim A$ (see Observation
\ref{phiJK}).

We shall express the closed subsets of $\spec A$ and $\prim A$ in the forms 
$$V(I) := \{ P\in \spec A \mid P \supseteq
I \}  \qquad\text{and}\qquad V_p(I) := \{ P\in \prim A \mid P \supseteq I \},$$ 
for ideals $I$ of $A$.

The rational action of $H$ on $A$ makes $A$ a graded algebra over the character group $X(H)$ (cf.~\cite[Lemma
II.2.11]{BG}). The nonzero homogeneous elements for this grading are precisely the $H$-eigenvectors. It will be
convenient to express many statements in terms of homogeneous elements rather than $H$-eigenvectors, in $A$ as well as
in factors of $A$ modulo $H$-primes and localizations thereof. This also allows us to refer to homogeneous components
of elements. (Since the mentioned $X(H)$-gradings are the only gradings used in this paper, we may use the term
``homogeneous'' without ambiguity.) Now $X(H)$ is a free abelian group of finite rank, so it can be made into a
totally ordered group in various ways. Fix such a totally ordered abelian group structure on $X(H)$. This allows us to
refer to \emph{leading terms} and \emph{lowest degree terms} of nonhomogeneous elements when needed. 
\end{assumps}

For reference, we quote the parts of the Stratification and Dixmier-Moeglin Equivalence Theorems (\cite[Theorems
II.2.13, II.8.4, Proposition II.8.3]{BG}) relevant to our present work.

\begin{theorem}  \label{strat.thm} 
Impose Assumptions {\rm\ref{AHetc}}, and let $J \in \Hspec A$.

{\rm(a)} The set $\calE_J$ of all regular homogeneous elements in $A/J$ is a denominator set, and the localization
$A_J := (A/J)[\calE_J^{-1}]$ is an $H$-simple ring {\rm(}with respect to the induced $H$-action{\rm)}.

{\rm(b)} $\spec_J A \approx \spec A_J \approx \spec Z(A_J)$ via localization, contraction, and extension.

{\rm(c)} $Z(A_J)$ is a Laurent polynomial ring over $k$ in at most $\rank H$ indeterminates.

{\rm(d)} $\prim_J A$ equals the set of maximal elements of $\spec_J A$, and the maps in {\rm(b)} restrict to a
homeomorphism $\prim_J A \approx \max Z(A_J)$. 
\end{theorem}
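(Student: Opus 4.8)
Since the statement collects the Goodearl--Letzter stratification theory in the form set up in \cite[Chapter II]{BG}, the plan is to follow that development, and I sketch the main line here. The one preliminary reduction is that, because $H$ is a torus and hence connected, every $H$-prime ideal $J$ of $A$ is genuinely prime; thus $A/J$ is a prime noetherian $k$-algebra carrying an induced rational $H$-action for which the zero ideal is $H$-prime, and it suffices to treat each assertion about $A$ at $J$ as the corresponding assertion about $A/J$ at $0$.

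The technical core is part~(a): the set $\calE_J$ of regular homogeneous elements of $A/J$ is a right and left Ore set, hence a denominator set since its elements are regular. This is where the hypotheses pull their weight --- noetherianity, rationality of the action, finiteness of $\Hspec A$, and the existence of a total order on $X(H)$, with lowest-degree-term arguments as the main tool --- and it is carried out in \cite[\S II.2]{BG}. Granting it, $H$-simplicity of $A_J$ is equivalent, via the contraction/extension correspondence for ideals under localization, to the statement that the only $H$-stable ideal of $A/J$ disjoint from $\calE_J$ is $0$; that every nonzero $H$-stable ideal of $A/J$ contains a regular homogeneous element is likewise part of the technical package of \cite[\S II.2]{BG}.

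Parts~(b)--(d) are then comparatively structural. For~(b): localization at $\calE_J$ induces an inclusion-preserving homeomorphism between the set of primes of $A/J$ disjoint from $\calE_J$ and $\spec A_J$, and ``disjoint from $\calE_J$'' coincides with ``lies in $\spec_J A$'' --- one inclusion is immediate, since a regular homogeneous element is an $H$-eigenvector and hence lies in a prime $P$ exactly when it lies in $\bigcap_{h\in H} h.P$, while the reverse uses the package above. Since $A_J$ is $H$-simple noetherian, it is tightly controlled by its centre, and contraction/extension give a homeomorphism $\spec A_J \approx \spec Z(A_J)$ \cite[Theorem II.2.13]{BG}. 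For~(c): $Z(A_J)$ is $X(H)$-graded, and any nonzero homogeneous element of it generates a nonzero $H$-stable ideal, hence is a unit by $H$-simplicity; it follows that $Z(A_J)$ is the group algebra over its degree-zero part $Z(A_J)_0$ of the subgroup $\Gamma\le X(H)$ of occurring degrees, that $Z(A_J)_0$ is a field extension of $k$ algebraic over $k$ (by the Nullstellensatz hypothesis) and hence equal to $k$, and therefore $Z(A_J)\cong k[\Gamma]$ is a Laurent polynomial ring in $\rank\Gamma\le\rank H$ indeterminates. For~(d): $Z(A_J)$ is a commutative affine $k$-algebra and $k$ is algebraically closed, so its primitive ideals are exactly its maximal ideals; transporting this through~(b) and using that the noncommutative Nullstellensatz hypothesis forces the Dixmier--Moeglin equivalence for the primes in $\spec_J A$ --- so that such a prime is primitive iff its central contraction is maximal iff it is a maximal element of $\spec_J A$ --- yields the homeomorphism $\prim_J A \approx \max Z(A_J)$ and the description of $\prim_J A$ as the set of maximal elements of $\spec_J A$.

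The main obstacle is squarely part~(a): proving that the regular homogeneous elements form a denominator set and that nonzero $H$-stable ideals of $A/J$ contain regular homogeneous elements. This is delicate precisely because a leading term of a regular element need not itself be regular, so the lowest-degree-term arguments must be combined carefully with the noetherian and finiteness hypotheses; everything downstream is formal once $A_J$ and its centre are in hand. As all of this is proved in detail in \cite[Theorems II.2.13 and II.8.4, Proposition II.8.3]{BG}, I would ultimately refer the reader there for the complete argument.
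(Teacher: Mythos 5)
Your proposal is correct and follows the same route as the paper, which offers no independent proof of this theorem but simply quotes it from \cite[Theorems II.2.13, II.8.4, Proposition II.8.3]{BG}; your sketch of the Goodearl--Letzter arguments (denominator set and $H$-simplicity as the technical core, then the graded-field description of $Z(A_J)$ with the Nullstellensatz and $k=\kbar$ pinning the coefficient field to $k$, and the Dixmier--Moeglin equivalence for part (d)) is an accurate outline of exactly those cited results. Nothing further is needed.
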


When working with specific algebras such as $\OqSLn$ or $\OqMn$, it may be convenient to shrink the denominator sets
$\calE_J$. This can be done without loss of the above properties in the following circumstances.

\begin{lemma}  \label{shrinkEJ} 
Impose Assumptions {\rm\ref{AHetc}}, and let $J \in \Hspec A$. Suppose that $\calE
\subseteq \calE_J$ is a denominator set such that all nonzero $H$-primes of $A/J$ have nonempty intersection with
$\calE$. Then:

{\rm(a)} The localization $A_\calE := (A/J)[\calE^{-1}]$ is $H$-simple.

{\rm(b)} $\spec_J A \approx \spec A_\calE \approx \spec Z(A_\calE)$ and $\prim_J A \approx \max Z(A_\calE)$ via
localization, contraction, and extension.

{\rm(c)} $Z(A_J) = Z(A_\calE)$. 
\end{lemma}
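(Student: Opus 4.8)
The plan is to deduce everything from Theorem \ref{strat.thm} by comparing the localization $A_\calE = (A/J)[\calE^{-1}]$ with the larger localization $A_J = (A/J)[\calE_J^{-1}]$. Since $\calE \subseteq \calE_J$ and $\calE_J$ is already known to be a denominator set in $A/J$ consisting of regular elements, $A_\calE$ embeds into $A_J$ (the universal map $A_\calE \to A_J$ is injective because $\calE$ consists of regular elements of $A/J$ and hence of $A_\calE$), and $A_J = A_\calE[\calE_J^{-1}]$, i.e.\ $A_J$ is a further (Ore) localization of $A_\calE$. Both localizations carry induced rational $H$-actions, and the embedding $A_\calE \hookrightarrow A_J$ is $H$-equivariant.

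For part (a), I would show directly that $A_\calE$ is $H$-simple. Let $0 \ne I$ be an $H$-stable ideal of $A_\calE$; then $I \cap (A/J)$ is a nonzero $H$-stable ideal of $A/J$, so it contains a nonzero $H$-prime ideal of $A/J$ (the intersection of the finitely many minimal primes over it, which are permuted by $H$, is $H$-prime and nonzero). By hypothesis this $H$-prime meets $\calE$, so $I$ contains a unit of $A_\calE$, forcing $I = A_\calE$. (This step is really the content of the lemma; the rest is bookkeeping.)

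For part (c), note that the inclusion $Z(A_J) \supseteq Z(A_\calE)$ is not a priori obvious, but the reverse-type comparison is: a central element of $A_\calE$ stays central in the localization $A_J$, so $Z(A_\calE) \subseteq Z(A_J)$. For the opposite inclusion, take $z \in Z(A_J)$. By Theorem \ref{strat.thm}(c), $Z(A_J)$ is a Laurent polynomial ring, but more usefully $z$ can be written as $z = a e^{-1}$ with $a \in A/J$ homogeneous and $e \in \calE_J$ homogeneous; multiplying through, $z e = e z$ shows $ea = ae$ after clearing, and one checks that $z$ in fact lies in the subring $(A/J)[\calE^{-1}] = A_\calE$ by using $H$-simplicity of $A_\calE$ to control denominators — concretely, the set of $e \in \calE_J$ with $ez \in A_\calE$ is an $H$-stable multiplicative set whose ``denominator ideal'' $\{ a \in A_\calE : az \in A_\calE\}$ is a nonzero $H$-stable ideal of $A_\calE$, hence all of $A_\calE$ by part (a), so $z \in A_\calE$; being central in $A_J$ it is a fortiori central in the subring $A_\calE$.

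Given (a) and (c), part (b) follows from Theorem \ref{strat.thm}(b),(d) applied twice: the same general mechanism (localization, contraction, extension) that yields $\spec_J A \approx \spec A_J \approx \spec Z(A_J)$ applies verbatim to the $H$-simple ring $A_\calE$, giving $\spec_J A \approx \spec A_\calE \approx \spec Z(A_\calE)$, and $\prim_J A \approx \max Z(A_\calE)$ likewise; the identifications are compatible because $Z(A_J) = Z(A_\calE)$ and the localization map $A/J \to A_\calE$ factors the map $A/J \to A_J$. The main obstacle I anticipate is part (c): verifying that a central element of the bigger localization already has its denominator in $\calE$ rather than only in $\calE_J$, which is where the full strength of the hypothesis (every nonzero $H$-prime of $A/J$ meets $\calE$) together with the $H$-simplicity from part (a) must be brought to bear.
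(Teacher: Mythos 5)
Your part (a) has a genuine gap, and it sits precisely at the step you call ``really the content of the lemma.'' You assert that a nonzero $H$-stable ideal of $A/J$ must \emph{contain} a nonzero $H$-prime, justified by the intersection of the finitely many minimal primes over it. That justification is backwards: the intersection of the minimal primes over an ideal \emph{contains} the ideal (it is its semiprime radical), it is not contained in it, and in general it is only $H$-stable semiprime, not $H$-prime. Worse, the asserted statement itself is false under Assumptions \ref{AHetc}: take $A=k[x]$ with $H=\kx$ acting by scaling $x$ (so the $H$-primes are $0$ and $\langle x\rangle$, and with $J=0$, $\calE=\{\lambda x^n \mid \lambda\in\kx,\ n\ge 0\}$ the hypotheses of the lemma hold); the nonzero $H$-ideal $\langle x^2\rangle$ contains no nonzero prime ideal at all, although it does meet $\calE$. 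The fact you actually need --- every nonzero $H$-ideal of $A/J$ meets $\calE$ --- is true, but in the paper it is a \emph{consequence} of (a), not an ingredient of its proof. The correct direction runs through $H$-primes of the localization: any $H$-prime of $A_\calE$ contracts to an $H$-prime of $A/J$ disjoint from $\calE$, which is zero by hypothesis, so $A_\calE$ has no nonzero $H$-primes; and since a proper nonzero $H$-ideal of $A_\calE$ would be contained in an $H$-ideal maximal among proper $H$-ideals, which is $H$-prime, $A_\calE$ is $H$-simple. (Equivalently, in your setup: if $I\ne A_\calE$, enlarge $I$ to an $H$-prime of $A_\calE$ and contract, rather than contracting $I$ itself and hunting for an $H$-prime inside the contraction.)

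Two smaller points. In (c) your denominator-ideal argument is essentially the paper's, but the ideal $I=\{a\in A_\calE \mid az\in A_\calE\}$ is only evidently $H$-stable when $z$ is homogeneous; you should first reduce to homogeneous $z$ by noting that the homogeneous components of a central element of $A_J$ are again central, as the paper does. In (b), Theorem \ref{strat.thm} is stated for $A_J$ and does not apply ``verbatim'' to $A_\calE$: the point requiring proof is that a prime of $A/J$ disjoint from $\calE$ actually lies in $\spec_J A$. This follows either from the observation that its $H$-core is an $H$-prime of $A/J$ disjoint from $\calE$, hence zero by hypothesis, or (as in the paper) from the fact that all nonzero $H$-ideals of $A/J$ meet $\calE$, which is deduced from the $H$-simplicity in (a); after that, $\spec A_\calE \approx \spec Z(A_\calE)$ comes from \cite[Corollary II.3.9]{BG} applied to the $H$-simple ring $A_\calE$, and the statement for $\prim_J A$ from maximality. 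With (a) repaired and these details supplied, your overall architecture (comparing $A_\calE$ with $A_J$ and using $Z(A_J)=Z(A_\calE)$) matches the paper's.
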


\begin{proof} Similar observations have been made in a number of instances, such as \cite[\S3.2]{primOqSL3}. We repeat
the arguments for the reader's convenience.

(a) Any $H$-prime of $A_\calE$ contracts to an $H$-prime of $A/J$ disjoint from $\calE$, and is thus zero by virtue of
our hypothesis on $\calE$. Consequently, $A_\calE$ has no nonzero $H$-primes, and therefore it is $H$-simple.

(b) Note that all nonzero $H$-ideals of $A/J$ have nonempty intersection with $\calE$, because $A_\calE$ is
$H$-simple.

The $J$-stratum $\spec_J A$ may be rewritten in the form 
$$\spec_J A = \{ P \in \spec A \mid P \supseteq J
\text{\;and\;} P/J \text{\;contains no nonzero\;} H\text{-ideals of\;} A/J \},$$ 
from which we see that 
$$\spec_J A =
\{ P \in \spec A \mid P \supseteq J \text{\;and\;} (P/J)\cap \calE = \varnothing \}.$$ 
Consequently, localization
provides a homeomorphism $\spec_J A \approx \spec A_\calE$. The homeomorphism $\spec A_\calE \approx \spec Z(A_\calE)$
follows from \cite[Corollary II.3.9]{BG} because $A_\calE$ is $H$-simple. Finally, because $\prim_J A$ is the
collection of maximal elements in $\spec_J A$, the composite homeomorphism $\spec_J A \rightarrow \spec Z(A_\calE)$
restricts to a homeomorphism $\prim_J A \rightarrow \max Z(A_\calE)$.

(c) Since $Z(A_\calE)$ is central in $\Fract A/J$, we must have $Z(A_\calE) \subseteq Z(A_J)$. Conversely, consider an
element $c \in Z(A_J)$. As is easily checked, the homogeneous components of $c$ are all central (e.g., \cite[Exercise
II.3.B]{BG}), and so to prove that $c \in Z(A_\calE)$, there is no loss of generality in assuming that $c$ itself is
homogeneous. Set $I := \{ a \in A_\calE \mid ac \in A_\calE \}$, and observe that $I$ is a nonzero $H$-stable ideal of
$A_\calE$ (it is nonzero because $A_J$ is a localization of $A_\calE$). Since $A_\calE$ is $H$-simple, we have $I =
A_\calE$, whence $c \in A_\calE$ and thus $c \in Z(A_\calE)$. 
\end{proof}

\begin{obs}  \label{phiJK} 
Under Assumptions \ref{AHetc}, we have partitions 
\begin{equation}
\label{specprimpartitions} 
\spec A = \bigsqcup_{J\in\Pi} \spec_J A  \qquad\text{and}\qquad  \prim A =
\bigsqcup_{J\in\Pi} \prim_J A \,, 
\end{equation} 
where $\Pi := \Hspec A$. These partitions are finite stratifications,
because 
\begin{align*} 
\spec_J A &= V(J) \setminus \biggl(\, \bigsqcup_{K\in\Pi,\,K \supsetneq J} V(K) \biggr)  \\
\overline{\spec_J A} &= V(J) = \bigsqcup_{K\in\Pi,\,K\supseteq J} \spec_K A 
\end{align*} 
for $J \in \Pi$, and
similarly for $\prim_J A$ and its closure. The last step requires the fact that $\overline{\prim_J A} = V_p(J)$. We
shall later need a slight generalization: 
\begin{equation}  \label{VpPprimclosure} 
\overline{V_p(P) \cap \prim_J A} =
V_p(P) \text{\;for all\;} P \in \spec_J A. 
\end{equation} 
This follows from the assumption that $A$ is a Jacobson
ring, as in \cite[Proposition 1.3(a)]{BGtams}; we include the short argument. Any primitive ideal of $A$ that contains
$P$ also contains $J$, so it belongs to $\prim_L A$ for some $H$-prime $L \supseteq J$. Hence, $$P = \bigcap\, \{ Q
\in \prim A \mid Q \supseteq P \} = \bigcap_{L \in \Pi,\, L\supseteq J} V_p(P)\cap \prim_L A\,.$$ Since $\Hspec A$ is
finite and $\bigcap \bigl( V_p(P) \cap \prim_L A \bigr) \supseteq L \supsetneq J$ for all $H$-primes $L$ that properly
contain $J$, we conclude that 
\begin{equation}  
\label{PcapVpPprimJ} P = \bigcap\, \bigl( V_p(P) \cap \prim_J A \bigr)
\text{\;for all\;} P \in \spec_J A. 
\end{equation} 
This implies \eqref{VpPprimclosure}.

We shall use the following notation for the maps described in Definition \ref{CLetc} relative to the above
stratifications: 
\begin{equation}  \label{phiJKdef} 
\begin{aligned} 
\phi^s_{JK} : \CL(\spec_J A) &\longrightarrow
\CL(\spec_K A), &\phi^s_{JK}(Y) &= \Ybar \cap \spec_K A  \\ \phi^p_{JK} : \CL(\prim_J A) &\longrightarrow \CL(\prim_K
A), &\phi^p_{JK}(Y) &= \Ybar \cap \prim_K A 
\end{aligned} 
\end{equation} 
for $J \subset K$ in $\Pi$.

In view of Lemma \ref{topostrat}, the Zariski topologies on $\spec A$ and $\prim A$ are determined by the topologies
on the strata $\spec_J A$ and $\prim_J A$ together with the maps $\phi^\bullet_{JK}$. Since the spaces $\spec_J A$ and
$\prim_J A$ are given (and computible) by Theorem \ref{strat.thm}, what remains is to determine the maps
$\phi^\bullet_{JK}$. 
\end{obs}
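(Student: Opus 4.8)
The assertions to be proved are that the two partitions in \eqref{specprimpartitions} are finite stratifications in the sense of Definition~\ref{finstrat} and that \eqref{VpPprimclosure} holds; the closing remark then follows at once from Lemma~\ref{topostrat}. The plan is to treat $\spec A$ first, where the argument is purely formal, and then $\prim A$, where one extra input --- the Jacobson property of $A$ --- is needed.

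For $\spec A$ I would first invoke the standard fact, valid under Assumptions~\ref{AHetc} (see \cite[\S II.2]{BG}), that for every $P \in \spec A$ the $H$-core $\bigcap_{h\in H} h.P$ is an $H$-prime ideal; since $P = 1.P \supseteq \bigcap_h h.P$, this places $P$ in exactly one stratum, so \eqref{specprimpartitions} is indeed a partition, finite by Assumption~\ref{AHetc}(3). Next, for $P \supseteq J$ the $H$-core of $P$ is an $H$-prime $K \supseteq J$, and $K \supsetneq J$ precisely when $P \supseteq K'$ for some $H$-prime $K' \supsetneq J$; this yields $\spec_J A = V(J) \setminus \bigcup_{K\in\Pi,\,K\supsetneq J} V(K)$, which is locally closed because $\Pi$ is finite, together with $V(J) = \bigsqcup_{K\in\Pi,\,K\supseteq J}\spec_K A$. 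Finally, using that the closure of a set $S$ in $\spec A$ is $V\bigl(\bigcap_{P\in S}P\bigr)$, and that $J$ itself lies in $\spec_J A$ while every member of $\spec_J A$ contains $J$, so that $\bigcap_{P\in\spec_J A}P = J$, I get $\overline{\spec_J A} = V(J) = \bigsqcup_{K\supseteq J}\spec_K A$. This is condition~(5) of Definition~\ref{finstrat}, with the ordering matching \eqref{SjSibar}, and finishes the $\spec A$ case.

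The $\prim A$ case runs in parallel once the analogue $\overline{\prim_J A} = V_p(J)$, equivalently $\bigcap_{Q\in\prim_J A}Q = J$, is in hand; establishing this --- in the sharper form \eqref{PcapVpPprimJ}, that $P = \bigcap\bigl(V_p(P)\cap\prim_J A\bigr)$ for all $P\in\spec_J A$ --- is the heart of the matter and where I expect the real work to be. The argument I would use: since the noncommutative Nullstellensatz built into Assumptions~\ref{AHetc} makes $A$ a Jacobson ring, $P = \bigcap\{Q\in\prim A \mid Q\supseteq P\}$; sorting these primitives by their $H$-cores $L\supseteq J$ writes $P$ as the intersection of the \emph{finitely many} ideals $I_L := \bigcap\bigl(V_p(P)\cap\prim_L A\bigr)$, $L\in\Pi$, $L\supseteq J$. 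As $\prod_L I_L \subseteq \bigcap_L I_L = P$ and $P$ is prime, some $I_{L_0}\subseteq P$; but for $L\supsetneq J$ one has $I_L\supseteq L$ and $L\not\subseteq P$ (because the $H$-core of $P$ is exactly $J$), so $I_L\not\subseteq P$. Hence $L_0=J$ and $I_J\subseteq P$, while trivially $I_J\supseteq P$, giving \eqref{PcapVpPprimJ}; here finiteness of $\Hspec A$ is used precisely to keep the product finite. Specializing $P=J$ gives $\overline{\prim_J A}=V_p(J)$, whence $\prim_J A = V_p(J)\setminus\bigcup_{K\supsetneq J}V_p(K)$ and $\overline{\prim_J A}=V_p(J)=\bigsqcup_{K\supseteq J}\prim_K A$ exactly as before, and applying $V_p(-)$ to \eqref{PcapVpPprimJ} yields \eqref{VpPprimclosure}. (Alternatively, $\bigcap_{Q\in\prim_J A}Q=J$ can be read off from Theorem~\ref{strat.thm}(d), transporting the vanishing of the Jacobson radical of the Laurent polynomial ring $Z(A_J)$ through the $H$-simple localization $A_J$ back to $A/J$.)
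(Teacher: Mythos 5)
Your proposal is correct and follows essentially the same route as the paper: the stratification conditions are checked directly from the definition of the strata, and \eqref{PcapVpPprimJ} is obtained from the Jacobson property together with the finiteness of $\Hspec A$ and the fact that $\bigcap\bigl(V_p(P)\cap\prim_L A\bigr)\supseteq L\supsetneq J$ for $L\supsetneq J$. Your explicit use of primality of $P$ (some $I_{L_0}\subseteq P$, and $L\nsubseteq P$ for $L\supsetneq J$ since the $H$-core of $P$ is $J$) just fills in the step the paper leaves implicit.
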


\begin{example}  \label{eg.oqk2} 
Let $A = \Oq(k^2)$ with $q$ not a root of unity, standard generators $x$, $y$, and
the standard action of $H = (\kx)^2$. (E.g., see \cite[Examples II.1.6(a), II.2.3(a), II.8.1]{BG}.) Consider the
$H$-primes $J := \langle x\rangle$ and $K := \langle x,y\rangle$, and recall that 
\begin{align*} 
\prim_J A &= \{
\langle x, y-\beta\rangle \mid \beta \in \kx\}  &\spec_J A &= \{ J \} \sqcup \prim_J A  \\ \prim_K A &= \spec_K A = \{
K \}\,. \end{align*} 
The maps $\phi^\bullet_{JK}$ can be described as follows: \begin{align*} \phi^s_{JK}(Y) &=
\begin{cases} \varnothing &(Y \text{\;finite,\;} J \notin Y)\\ \{  K \} &(Y \text{\;infinite or\;} J \in Y)
\end{cases}  &&(Y \in \CL(\spec_J A)  \\ \phi^p_{JK}(Y) &= \begin{cases} \varnothing &(Y \text{\;finite})\\ \{  K \}
&(Y \text{\;infinite}) \end{cases}  &&(Y \in \CL(\prim_J A) . 
\end{align*}

Observe that the two ``natural'' possibilities for maps between collections of closed sets are ruled out by the fact
that for primitive ideal strata, $\phi^p_{JK}$ maps all singletons to the empty set. Namely, there is no continuous
map $f : \prim_K A \rightarrow \prim_J A$ such that $\phi^p_{JK}(Y) = f^{-1}(Y)$ for $Y \in \CL(\prim_J A)$, and there
is no map $g : \prim_J A \rightarrow \prim_K A$ such that $\phi^p_{JK}(Y) = \overline{g(Y)}$ for $Y \in \CL(\prim_J
A)$. Nor can $\phi^s_{JK} : \spec_J A \rightarrow \spec_K A$ be described in either of these ways.

On the other hand, $\phi^p_{JK}$ can easily be obtained from a combination of two such maps. For instance, we can
define continuous maps $f: \prim_K A \rightarrow \AA^1_k$ and $g: \prim_J A \rightarrow \AA^1_k$ by the rules
$$f(\langle x,y\rangle) = 0  \qquad\text{and}\qquad g(\langle x,\, y-\beta\rangle) = \beta,$$ 
with the help of which
$\phi^p_{JK}$ can be expressed in the form 
$$\phi^p_{JK}(Y) = f^{-1} \bigl( \overline{g(Y)} \bigr)$$ 
for $Y \in
\CL(\prim_J A)$.

It will be convenient to introduce the following notation for maps of this type. 
\end{example}

\begin{definition}  \label{ftopg} 
Suppose that $f: S' \rightarrow W$ and $g: S \rightarrow W$ are continuous maps
between topological spaces. We define a map 
$$f\hightop g : \CL(S) \longrightarrow \CL(S')$$ 
according to the rule
$$(f\hightop g)(Y) = f^{-1} \bigl( \overline{g(Y)} \bigr).$$ 
(The notation $f\hightop g$ is meant to abbreviate
$f^{-1} \circ \overline{(-)} \circ g$.) 
\end{definition}

\begin{remark}  \label{topwanted} 
Under Assumptions \ref{AHetc}, we would like good descriptions of the  maps
$\phi^\bullet_{JK}$ (for $J \subset K$ in $\Hspec A$) in the form $f\hightop g$. There is always a trivial way to do
this. For instance, if we let $f: \spec_K A \rightarrow \spec A$ and $g: \spec_J A \rightarrow \spec A$ be the
inclusion maps, then $\phi^s_{JK} = f\hightop g$ by definition of $\phi^s_{JK}$. However, this is no help towards our
goal of describing the topological space $\spec A$.

By the Stratification Theorem \ref{strat.thm}, each $\prim_J A$ is the topological space underlying an affine variety
$\max Z(A_J)$ over $k$, and $\spec_J A$ is the space underlying the corresponding scheme $\spec Z(A_J)$. In the first
case, it is natural to ask for $\phi^p_{JK} = f\hightop g$ where $f$ and $g$ are morphisms of varieties, and in the
second case, to ask for $\phi^s_{JK} = f\hightop g$ where $f$ and $g$ are morphisms of schemes. In both cases, $f$ and
$g$ would be comorphisms of $k$-algebra maps $R \rightarrow Z(A_K)$ and $R\rightarrow Z(A_J)$, for some affine
commutative $k$-algebra $R$. Given the forms of $A_J$ and $A_K$, it is natural to conjecture that an appropriate $R$
would be the center of some localization of $A/J$, specifically, the localization of $A/J$ with respect to the set
$\calE_{JK}$ of those homogeneous elements of $A/J$ which are regular modulo $K/J$. However, such a localization does
not always exist, even in case $H$ is trivial and $A$ has only finitely many prime ideals. On the other hand, if
$(A/J)[\calE_{JK}^{-1}]$ did exist, its center could be described in the form 
$$Z((A/J)[\calE_{JK}^{-1}]) = \{ z \in
Z(A_J) \mid zc \in A/J \text{\;\;for some\;} c \in \calE_{JK} \},$$ 
which does not require the existence of
$(A/J)[\calE_{JK}]^{-1}]$. Thus, we propose to work with algebras of the latter type. 
\end{remark}

\begin{definition}  \label{ZJK} 
Impose Assumptions \ref{AHetc}. For $J \subset K$ in $\Hspec A$, set 
\begin{align}
\calE_{JK} &:= \{ \text{homogeneous elements\;} c \in A/J \mid c \text{\;is regular modulo\;} K/J \}    \label{defZJK}
\\ Z_{JK} &:= \{ z \in Z(A_J) \mid zc \in A/J \text{\;for\;} \text{some\;} c \in \calE_{JK} \}.  \label{defEJK}
\end{align} 
It is easily checked that $Z_{JK}$ is a $k$-subalgebra of $Z(A_J)$. For, given any $z_1,z_2 \in Z(A_J)$,
there exist $c_1,c_2 \in \calE_{JK}$ such that $z_ic_i \in A/J$ for $i=1,2$, whence $c_1c_2 \in \calE_{JK}$ and
\begin{equation}  \label{ZJKsubalg} 
\begin{aligned} 
(z_1z_2)(c_1c_2) &= z_1c_1z_2c_2 \in A/J  \\ (z_1\pm z_2)(c_1c_2)
&= z_1c_1c_2 \pm c_1z_2c_2 \in A/J. 
\end{aligned} 
\end{equation}

Note also that $Z_{JK} \supseteq Z(A/J)$.

In general, it appears that we must allow the possibility that $Z_{JK}$ might not be affine, although that will be the
case in all the examples we analyze. This is not a problem, however, since we are only concerned with $\max Z_{JK}$
and $\spec Z_{JK}$ as topological spaces. 
\end{definition}

In examples, $Z_{JK}$ can often be computed as the center of a localization of $A/J$, as the following analog of Lemma
\ref{shrinkEJ} shows.

\begin{lemma}  \label{EAtils} 
Impose Assumptions {\rm \ref{AHetc}}, and let $J \subset K$ in $\Hspec A$. Suppose there
exists a denominator set $\calEtil_{JK} \subseteq \calE_{JK}$ such that 
\begin{enumerate} 
\item $(L/J) \cap
\calEtil_{JK} \ne \varnothing$ for all $H$-primes $L \supseteq J$ such that $L \nsubseteq K$. 
\end{enumerate} 
Then
\begin{equation}  \label{Zunchanged} 
Z_{JK} = Z \bigl( (A/J)[\calEtil_{JK}^{-1} ] \bigr). 
\end{equation} 
\end{lemma}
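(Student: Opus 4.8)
The plan is to prove the two inclusions $Z(\Atil)\subseteq Z_{JK}$ and $Z_{JK}\subseteq Z(\Atil)$ separately, writing $\Atil:=(A/J)[\calEtil_{JK}^{-1}]$. First I would record the routine preliminaries. Since $\calEtil_{JK}\subseteq\calE_{JK}\subseteq\calE_J$ consists of regular homogeneous elements, $\Atil$ is an $X(H)$-graded ring with $A/J\subseteq\Atil\subseteq A_J$, and any element central in $\Atil$ is central in $A_J$ (it centralizes $A/J$, hence $\calE_J$, hence each $s^{-1}$ with $s\in\calE_J$, hence all of $A_J$), so $Z(\Atil)\subseteq Z(A_J)$. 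Both $Z_{JK}$ and $Z(\Atil)$ are then graded $k$-subalgebras of $Z(A_J)$ — for $Z(\Atil)$ because the center of a ring graded by an abelian group is graded, and for $Z_{JK}$ because if $z\in Z(A_J)$ and $zc\in A/J$ for some homogeneous $c$, then each homogeneous component of $z$ lies in $Z(A_J)$ and, multiplied by $c$, is a homogeneous component of $zc\in A/J$. Hence it is enough to compare homogeneous elements. The inclusion $Z(\Atil)\subseteq Z_{JK}$ is then immediate: a homogeneous $z\in Z(\Atil)$ lies in $Z(A_J)$ by the above, and clearing its denominator over $\calEtil_{JK}$ (using centrality to pass between left and right fractions if need be) produces $d\in\calEtil_{JK}\subseteq\calE_{JK}$ with $zd\in A/J$, so $z\in Z_{JK}$.

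The substantive direction is $Z_{JK}\subseteq Z(\Atil)$, and here it suffices to show that a homogeneous $z\in Z_{JK}$ already lies in $\Atil$, since then $z\in Z(A_J)\cap\Atil\subseteq Z(\Atil)$. Fix $c\in\calE_{JK}$ with $zc\in A/J$ and set $\mathfrak a:=\{\,b\in A/J\mid zb\in A/J\,\}$; this is a two-sided ideal of $A/J$, graded because $z$ is homogeneous, and nonzero because $c\in\mathfrak a$. The key claim is that $\mathfrak a\cap\calEtil_{JK}\neq\varnothing$: granting it, any $d$ in the intersection satisfies $zd\in A/J$, whence $z=(zd)d^{-1}\in\Atil$. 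To prove the claim: since $c$ is regular modulo $K/J$ we have $c\notin K/J$, so $\mathfrak a\not\subseteq K/J$, and therefore $\mathfrak a\not\subseteq L/J$ for every $H$-prime $L$ of $A$ with $J\subseteq L\subseteq K$. Now consider $\mathfrak a\Atil$, an $H$-stable ideal of the noetherian ring $\Atil$. If it were proper it would be contained in some $H$-prime $\Itil$ of $\Atil$ (a maximal proper $H$-stable ideal is $H$-prime), and the contraction of $\Itil$ to $A/J$ would be $L/J$ for an $H$-prime $L\supseteq J$ of $A$ with $(L/J)\cap\calEtil_{JK}=\varnothing$; by hypothesis (1) this forces $L\subseteq K$, contradicting $\mathfrak a\subseteq L/J$. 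Hence $\mathfrak a\Atil=\Atil$, and the standard fact that a two-sided ideal of a noetherian ring extends to the unit ideal of an Ore localization exactly when it meets the denominator set yields $\mathfrak a\cap\calEtil_{JK}\neq\varnothing$.

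The hard part is precisely this last claim: the definition of $Z_{JK}$ only hands us \emph{some} $c\in\calE_{JK}$ that clears $z$ into $A/J$, whereas membership in $\Atil$ demands that this be done by an element of the prescribed set $\calEtil_{JK}$; bridging the gap is exactly where hypothesis (1) — equivalently, the fact that the $H$-primes of $\Atil$ are precisely the $H$-primes of $A$ lying in the interval $[J,K]$ — enters. The other ingredients (the correspondence of $H$-primes under Ore localization, maximality of a proper $H$-stable ideal forcing $H$-primeness, and the ideal-extension fact) are all standard. The one further point that should be checked explicitly is the containment $\calE_{JK}\subseteq\calE_J$ invoked at the outset, so that $\Atil$ genuinely sits inside $A_J$ and the two centers live in the same ambient ring.
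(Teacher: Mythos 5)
Your proof is correct, and while the easy inclusion $Z\bigl((A/J)[\calEtil_{JK}^{-1}]\bigr)\subseteq Z_{JK}$ is handled exactly as in the paper (write a central element as a fraction over $\calEtil_{JK}$, note it lies in $Z(A_J)$, and clear the denominator), your argument for the substantive inclusion $Z_{JK}\subseteq Z\bigl((A/J)[\calEtil_{JK}^{-1}]\bigr)$ is genuinely different. The paper (after reducing to $J=0$) argues constructively: given $z\in Z_{JK}$ with $zb\in A$ for $b\in\calE_{JK}$, it chooses primes $L_1,\dots,L_n$ minimal over $AbA$ with $L_1\cdots L_n\subseteq AbA$, observes these are $H$-primes not contained in $K$, picks $c_i\in L_i\cap\calEtil_{JK}$ by hypothesis (1), and sets $c=c_1\cdots c_n\in\calEtil_{JK}\cap AbA$, so that $zc\in zAbA=AzbA\subseteq A$. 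You instead introduce the conductor ideal $\mathfrak a=\{\,b\in A/J\mid zb\in A/J\,\}$ and show $\mathfrak a\cap\calEtil_{JK}\ne\varnothing$ by an avoidance argument through the localization: a maximal proper $H$-stable ideal of $\Atil_{JK}:=(A/J)[\calEtil_{JK}^{-1}]$ containing $\mathfrak a\Atil_{JK}$ would be $H$-prime and contract to some $L/J$ disjoint from $\calEtil_{JK}$, whence $L\subseteq K$ by (1), contradicting $c\in\mathfrak a\setminus K/J$. The ingredients you cite are indeed standard in the noetherian graded setting (extension of a two-sided ideal along a denominator set in a noetherian ring is two-sided, maximal proper $H$-stable ideals are $H$-prime, contractions of $H$-primes are $H$-primes disjoint from the denominators, and a two-sided ideal extends to the unit ideal exactly when it meets the denominator set); in fact you could avoid the localization altogether by taking an $H$-stable ideal containing $\mathfrak a$ maximal with respect to being disjoint from the multiplicative set $\calEtil_{JK}$, which is automatically $H$-prime. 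The trade-off: the paper's route is more elementary and explicit (it produces the clearing element $c$ directly), while yours isolates the structural content of hypothesis (1), namely that every $H$-prime of $\Atil_{JK}$ contracts to an $H$-prime lying between $J$ and $K$. As for the point you flag at the end, that $\calEtil_{JK}\subseteq\calE_J$ so that $\Atil_{JK}$ embeds in $A_J$: the paper's proof makes the same silent step (``Since then $c\in\calE_J$''), and in the intended applications $A/J$ is a domain so it is automatic; your proof is on the same footing as the paper's there.
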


\begin{proof} We may assume that $J=0$.

Consider an element $z \in Z (A[\calEtil_{JK}^{-1} ])$. Then $z \in Z(\Fract A)$ and $z= ac^{-1}$ for some $a \in A$
and $c \in \calEtil_{JK}$. Since then $c \in \calE_J$, we have $z \in A_J$ and hence $z \in Z(A_J)$. Moreover, $c \in
\calE_{JK}$ and $zc \in A$, whence $z \in Z_{JK}$.

Conversely, given $z \in Z_{JK}$, we have $z \in Z(A_J)$ and $zb \in A$ for some $b \in \calE_{JK}$. Choose primes
$L_1,\dots,L_n$ minimal over $AbA$ such that $L_1L_2 \cdots L_n \subseteq AbA$. Since $b$ is homogeneous, the $L_i$
are $H$-primes, and since $b \notin K$, no $L_i$ is contained in $K$. By hypothesis (1), there exist elements $c_i \in
L_i \cap \calEtil_{JK}$ for $i=1,\dots,n$. Now $c := c_1c_2 \cdots c_n \in \calEtil_{JK}$ and $c \in AbA$. Moreover,
$zc \in zAbA = AzbA \subseteq A$, so we can write $z = ac^{-1}$ with $a := zc \in A$. This shows that $z \in
A[\calEtil_{JK}^{-1} ]$. Since also $z \in Z(\Fract A)$, we conclude that $z \in Z (A[\calEtil_{JK}^{-1} ])$. This
establishes the last equality of \eqref{Zunchanged}. 
\end{proof}

\begin{lemma}  \label{ZJK+map} 
Impose Assumptions {\rm\ref{AHetc}}, let $J \subset K$ in $\Hspec A$, and let
$\pi_{JK}$ denote the quotient map $A/J \rightarrow A/K$.

There is a unique $k$-algebra map $f_{JK} : Z_{JK} \rightarrow Z(A_K)$ such that 
\begin{equation}  \label{fdef}
f_{JK}(z) = \pi_{JK}(zc)\pi_{JK}(c)^{-1} \text{\;for\;} z \in Z(A_J) \text{\;and\;} c\in \calE_{JK} \text{\;with\;} zc
\in A/J. 
\end{equation} 
\end{lemma}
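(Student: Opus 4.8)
The plan is to show that the formula \eqref{fdef} is well-defined as a function on $Z_{JK}$ (independence of the choice of $c$), that its image lands in $Z(A_K)$, that it is a $k$-algebra homomorphism, and that these requirements force uniqueness. As usual we may reduce to the case $J=0$, so that $A/J=A$, $K$ is an $H$-prime of $A$, $\pi_{JK}=\pi_K\colon A\to A/K$, $\calE_{JK}$ is the set of homogeneous elements of $A$ regular modulo $K$, and $Z_{0K}=\{z\in Z(A_0)\mid zc\in A$ for some $c\in\calE_{0K}\}$. Note first that for $c\in\calE_{0K}$ the image $\pi_K(c)$ is a nonzero homogeneous element of $A/K$; since $A/K$ is a noetherian domain (being a factor of $A$ by a prime), $\pi_K(c)$ is regular, so it becomes invertible in $\Fract(A/K)$ and the expression $\pi_K(zc)\pi_K(c)^{-1}$ at least makes sense there.

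The key step is well-definedness. Given $z\in Z_{0K}$ and two elements $c_1,c_2\in\calE_{0K}$ with $zc_1,zc_2\in A$, I want $\pi_K(zc_1)\pi_K(c_1)^{-1}=\pi_K(zc_2)\pi_K(c_2)^{-1}$ in $\Fract(A/K)$, equivalently $\pi_K(zc_1)\pi_K(c_2)=\pi_K(zc_2)\pi_K(c_1)$. Working inside $A_0$, where $z$ is central, we have $(zc_1)c_2=zc_1c_2=c_1(zc_2)$ as an identity of elements of $A$ (all three are in $A$ since $zc_1,zc_2\in A$ and $c_1,c_2\in A$, and $z$ central in $\Fract A$ lets us move it past $c_1$ or $c_2$). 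Applying the homomorphism $\pi_K$ gives $\pi_K(zc_1)\pi_K(c_2)=\pi_K(c_1)\pi_K(zc_2)$; but $\pi_K(c_1)$ and $\pi_K(zc_2)$ — wait, $zc_2$ need not be central mod $K$ a priori, so I must be careful: the point is only that $\pi_K(zc_1)\pi_K(c_2)=\pi_K(c_1)\pi_K(zc_2)$, and then I multiply on the left by $\pi_K(c_1)^{-1}$ and on the right by $\pi_K(c_2)^{-1}$ (legitimate in the division ring $\Fract(A/K)$) to get $\pi_K(c_1)^{-1}\pi_K(zc_1)=\pi_K(zc_2)\pi_K(c_2)^{-1}$. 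To conclude I also need that the left-hand quotient $\pi_K(zc_1)\pi_K(c_1)^{-1}$ equals the left quotient $\pi_K(c_1)^{-1}\pi_K(zc_1)$, which will follow once I know $f_{JK}(z)$ is central in $A_K$ — so well-definedness and the image claim are best proved together.

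So the cleaner route is: first show that $w:=\pi_K(zc)\pi_K(c)^{-1}\in\Fract(A/K)$ is central in $A/K$ (hence in $A_K=\Fract(A/K)$ localized appropriately, equivalently in $Z(A_K)=Z(\Fract(A/K))$). Take any $a\in A$; using centrality of $z$ in $A_0$ one has $zca=z(ca)$ and also, writing $ac'=c'' $-type identities — more directly, from $z\in Z(A_0)$ we get $a(zc)=z(ac)=(za)c$, but I want to compare $\pi_K(zc)\pi_K(a)$ with $\pi_K(a)\pi_K(zc)$. Here I should use that $z$ commutes with everything in $A_0\supseteq A$: pick $c'\in\calE_{0K}$ with $zc'\in A$ too (take $c'=c$); then $ (zc) a \cdot \text{something}$... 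The honest argument: for $a\in A$, since $z$ is central, $z(cac^{-1}\cdot c) = zca = (cac^{-1})zc$ inside $A_0$ provided $cac^{-1}\in A$, which fails in general — so instead clear denominators symmetrically. Because this is where the real content sits, let me record it as: given $a\in A$ choose (using that $AcA$ contains a product of $H$-primes none contained in $K$, exactly as in the proof of Lemma~\ref{EAtils}) an element $c'\in\calE_{0K}$ with $c'a\in Ac$, say $c'a=bc$ with $b\in A$; then modulo $K$, $\pi_K(c')\pi_K(a)=\pi_K(b)\pi_K(c)$, and combining with the central identity $zc'a = z b c = b(zc)$ in $A$ (valid as $zc'\in A$, hence $zc'a\in A$, and $zc\in A$) and $c'za = c' a z$-type manipulations, one extracts $\pi_K(a)w = w\pi_K(a)$. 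I expect this interplay between the Ore/localization condition and centrality to be the main obstacle; everything else is formal.

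Finally, granting that $f_{JK}(z):=w$ is a well-defined element of $Z(A_K)$, the $k$-algebra homomorphism property follows from the computation already in \eqref{ZJKsubalg}: for $z_1,z_2\in Z_{0K}$ with $z_ic_i\in A$ and $c_i\in\calE_{0K}$, the element $c_1c_2$ lies in $\calE_{0K}$ (a product of regular homogeneous elements, regular mod $K$ since $A/K$ is a domain), $(z_1z_2)(c_1c_2)\in A$ and $(z_1+z_2)(c_1c_2)\in A$ by \eqref{ZJKsubalg}, and applying $\pi_K$ and dividing by $\pi_K(c_1c_2)=\pi_K(c_1)\pi_K(c_2)$ yields $f_{JK}(z_1z_2)=f_{JK}(z_1)f_{JK}(z_2)$ and $f_{JK}(z_1+z_2)=f_{JK}(z_1)+f_{JK}(z_2)$, using centrality to rearrange factors; $f_{JK}(1)=1$ is immediate by taking $c=1$, and $k$-linearity is clear. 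Uniqueness is forced because \eqref{fdef} prescribes the value of $f_{JK}$ on every element of $Z_{JK}$.
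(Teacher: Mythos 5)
Your reduction to $J=0$, the algebra-map computations via \eqref{ZJKsubalg}, and the uniqueness remark all match the paper. But the step you yourself flag as ``where the real content sits'' --- that $w=\pi(zc)\pi(c)^{-1}$ commutes with $\pi(a)$ for every $a\in A$, which you also need in order to finish well-definedness --- is left as an unproved sketch, and the mechanism you propose for it does not work as stated. You want to choose $c'\in\calE_{0K}$ with $c'a\in Ac$; nothing provides such an element: $\calE_{JK}$ is not assumed to be a denominator set (Remark \ref{topwanted} stresses that the localization $(A/J)[\calE_{JK}^{-1}]$ may fail to exist), and the argument you cite from Lemma \ref{EAtils} only produces, under an extra hypothesis, elements of $\calEtil_{JK}\cap AcA$ --- not elements satisfying a one-sided Ore condition with respect to $c$. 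Your sketch moreover assumes $zc'\in A$ for this new $c'$, which is unjustified ($z\in Z_{JK}$ only guarantees $zc\in A$ for some particular $c$), and the extraction of $\pi(a)w=w\pi(a)$ from the listed identities is never actually carried out.

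In fact no Ore-type input is needed: centrality of $z$ in $A_J$ already gives identities among elements of $A$ itself, namely $(zc)c=c(zc)$ and $c(azc)=(zca)c$, where $azc,\,zca,\,zc\in A$. Applying $\pi$ and using invertibility of $\pi(c)$ in $A_K$ yields $\pi(c)^{-1}\pi(zc)=\pi(zc)\pi(c)^{-1}=w$ and then $\pi(a)w=\pi(azc)\pi(c)^{-1}=\pi(c)^{-1}\pi(zca)=w\pi(a)$; this is the paper's argument, and it also closes your well-definedness step, since the equality you postponed, $\pi(zc_1)\pi(c_1)^{-1}=\pi(c_1)^{-1}\pi(zc_1)$, is immediate from $(zc_1)c_1=c_1(zc_1)$. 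Two smaller slips: $A/K$ is only a prime ring, not a domain, so $\Fract(A/K)$ is simple artinian rather than a division ring and a nonzero element need not be regular --- regularity of $\pi(c)$ is part of the definition of $\calE_{JK}$, and the natural place to compute is $A_K=(A/K)[\calE_K^{-1}]$, where $\pi(c)$ is invertible; the identification $Z(A_K)=Z(\Fract(A/K))$ you invoke is neither needed nor justified.
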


\begin{proof} Assuming existence, uniqueness of $f_{JK}$ is clear.

There is no loss of generality in assuming that $J=0$. Write $\pi := \pi_{JK}$ and $f:= f_{JK}$. Set $\calE :=
\calE_{JK}$, and note that $\pi(c)$ is invertible in $A_K$ for all $c \in \calE$. We will also use the fact that, by Theorem \ref{strat.thm}(a), 
$\pi(\calE) = \calE_K$ is a denominator set in $A/K$.

We wish to define $f$ first as a map $Z_{JK} \rightarrow A_K$, via the rule \eqref{fdef}. Suppose that $z\in Z(A_J)$
and $c_1,c_2\in \calE$ such that $zc_1,zc_2 \in A$. Since $c_1,c_1z,zc_i \in A$, we see that 
$$\pi(c_1) \pi(zc_i)
\pi(c_i)^{-1} = \pi(c_1zc_i) \pi(c_i)^{-1} = \pi(c_1z)$$ 
for $i=1,2$, whence $\pi(zc_1) \pi(c_1)^{-1} = \pi(zc_2)
\pi(c_2)^{-1}$. Therefore we have a well defined map $f: Z_{JK} \rightarrow A_K$ defined by \eqref{fdef}.

Next, we show that $f$ maps $Z_{JK}$ to $Z(A_K)$. It suffices to show, for each $z\in Z_{JK}$,  that $f(z)$ commutes
with $\pi(a)$ for all $a\in A$, since $A_K = \pi(A)[\pi(\calE)^{-1}]$. Choose $c\in \calE$ such that $zc\in A$, and
observe that $\pi(zc)\pi(c) = \pi(c)\pi(zc)$, whence 
$$\pi(c)^{-1} \pi(zc) = \pi(zc) \pi(c)^{-1} = f(z).$$ 
Since also
$\pi(c) \pi(azc) = \pi(zca) \pi(c)$, we see that 
$$\pi(a) f(z) = \pi(azc) \pi(c)^{-1} = \pi(c)^{-1} \pi(zca) = f(z)
\pi(a).$$ 
Thus $f(z) \in Z(A_K)$, as desired.

Finally, let $z_1,z_2 \in Z_{JK}$, and choose $c_1,c_2 \in \calE$ such that $z_ic_i \in A$ for $i=1,2$. In view of
\eqref{ZJKsubalg} and the centrality of $f(z_2)$, we find that 
\begin{align*} 
f(z_1z_2) &= \pi(z_1z_2c_1c_2)
\pi(c_1c_2)^{-1} = \pi(z_1c_1) \pi(z_2c_2) \pi(c_2)^{-1} \pi(c_1)^{-1}  \\
 &= \pi(z_1c_1) f(z_2) \pi(c_1)^{-1} = \pi(z_1c_1) \pi(c_1)^{-1} f(z_2) = f(z_1) f(z_2)  \\
f(z_1+z_2) &= \pi((z_1+z_2)c_1c_2) \pi(c_1c_2)^{-1}  \\
 &= \pi(z_1c_1) \pi(c_2) \pi(c_2)^{-1} \pi(c_1)^{-1} + \pi(c_1) \pi(z_2c_2) \pi(c_2)^{-1} \pi(c_1)^{-1}  \\
  &= f(z_1) + \pi(c_1) f(z_2) \pi(c_1)^{-1} = f(z_1) + f(z_2).
\end{align*} 
Since it is clear from \eqref{fdef} that $f(1)=1$, we conclude that $f$ is indeed an algebra
homomorphism. 
\end{proof}

Given a homomorphism $d : R \rightarrow S$ between commutative $k$-algebras, where $S$ is affine but $R$ might not be,
we shall use the same notation $d^\circ$ for both of the comorphisms 
$$\max S \longrightarrow \max R
\qquad\text{and}\qquad \spec S \longrightarrow \spec R$$ 
corresponding to $d$.

\begin{conjecture}  \label{fJKgJKconj} 
Impose Assumptions {\rm\ref{AHetc}}, and let $J \subset K$ in $\Hspec A$.
Identify $\spec_J A$, $\spec_K A$, $\prim_J A$, $\prim_K A$ with $\spec Z(A_J)$, $\spec Z(A_K)$, $\max Z(A_J)$, $\max
Z(A_K)$ via the homeomorphisms of Theorem {\rm\ref{strat.thm}}.

Define the subalgebra $Z_{JK} \subseteq Z(A_J)$ as in Definition {\rm\ref{ZJK}} and the homomorphism $f_{JK} : Z_{JK}
\rightarrow Z(A_K)$ as in Lemma {\rm\ref{ZJK+map}}. Finally, define $g_{JK} : Z_{JK} \rightarrow Z(A_J)$ to be the
inclusion map. We conjecture that the maps $\phi^s_{JK}$ and $\phi^p_{JK}$ defined in {\rm\eqref{phiJKdef}} are both
given by the formula 
\begin{equation}  \label{JKconj} 
\phi^\bullet_{JK} = f_{JK}^\circ \hightop g_{JK}^\circ \,.
\end{equation} 
\end{conjecture}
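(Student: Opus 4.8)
The plan is to reduce \eqref{JKconj} to a concrete comparison of prime ideals, dispatch one inclusion unconditionally, and isolate the other as the genuine conjectural content, to be verified example by example. Via the homeomorphisms of Theorem \ref{strat.thm} we identify $\spec_J A$ with $\spec Z(A_J)$; since $Z(A_J)$ is a Laurent polynomial ring over $k$, hence noetherian, every closed subset of $\spec_J A$ is a finite union of closures $\overline{\{P\}}$ of single points $P\in\spec_J A$. Both $\phi^s_{JK}$ (by Lemma \ref{topostrat}) and $f_{JK}^\circ\hightop g_{JK}^\circ$ send $\varnothing$ to $\varnothing$ and preserve finite unions — for the latter because closure and preimage do — so the identity \eqref{JKconj} for $\phi^s_{JK}$ need only be checked on $Y=\overline{\{P\}}$. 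The statement for $\phi^p_{JK}$ then follows by restricting to closed points, using that $\prim_J A$ is dense in $\spec_J A$ ($A$ being Jacobson) together with the closure identities of Observation \ref{phiJK}; so it suffices to treat $\phi^s_{JK}$.

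Next I would unwind both sides for $Y=\overline{\{P\}}$. Writing $\mathfrak p\in\spec Z(A_J)$ and $\mathfrak q\in\spec Z(A_K)$ for the primes corresponding to $P\in\spec_J A$ and $Q\in\spec_K A$, one has $\phi^s_{JK}(\overline{\{P\}})=V(P)\cap\spec_K A$, while $\overline{g_{JK}^\circ(V(\mathfrak p))}=V(\mathfrak p\cap Z_{JK})$ (as $\mathfrak p\cap Z_{JK}$ is the least member of the set of contractions) and the preimage of $V(\mathfrak p\cap Z_{JK})$ under $f_{JK}^\circ$ consists of the primes of $Z(A_K)$ containing $f_{JK}(\mathfrak p\cap Z_{JK})$; hence $(f_{JK}^\circ\hightop g_{JK}^\circ)(\overline{\{P\}})$ is the set of $Q\in\spec_K A$ with $\mathfrak q\supseteq f_{JK}(\mathfrak p\cap Z_{JK})$. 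Thus \eqref{JKconj} is equivalent to the assertion that
\[
P\subseteq Q \quad\Longleftrightarrow\quad \mathfrak q\supseteq f_{JK}(\mathfrak p\cap Z_{JK})
\]
for all $P\in\spec_J A$ and $Q\in\spec_K A$.

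The forward implication holds in general, and I would prove it by a direct calculation: if $P\subseteq Q$ and $z\in\mathfrak p\cap Z_{JK}$, choose $c\in\calE_{JK}$ with $zc\in A/J$; then $z$ lies in the extension of $P/J$ to $A_J$, so $zc$ lies in that extension and in $A/J$, whence $zc\in P/J$ (the contraction of that extension to $A/J$ being $P/J$ since $P\in\spec_J A$); therefore $\pi_{JK}(zc)\in(P+K)/K\subseteq Q/K$, and $f_{JK}(z)=\pi_{JK}(zc)\pi_{JK}(c)^{-1}$ lies in the extension of $Q/K$ to $A_K$ and, being central, in $\mathfrak q$. This already yields $\phi^s_{JK}(Y)\subseteq(f_{JK}^\circ\hightop g_{JK}^\circ)(Y)$ for every closed $Y$.

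The reverse implication is the crux, and no abstract argument is available for it: it asserts that $Z_{JK}$ carries \emph{enough} central elements, in the sense that for $P\not\subseteq Q$ there must exist $z\in\mathfrak p\cap Z_{JK}$ with $f_{JK}(z)\notin\mathfrak q$ — equivalently, since $P\subseteq Q$ iff $\pi_{JK}(P/J)\subseteq Q/K$, that $\mathfrak p\cap Z_{JK}$ together with $f_{JK}$ pins down the image of $P/J$ in $A/K$. Remark \ref{topwanted} already flags the danger (the localization motivating $Z_{JK}$ need not even exist), so this must be established concretely for each algebra: enumerate the $H$-primes $J\subseteq K$, compute $Z(A_J)$, compute $Z_{JK}$ (typically via Lemma \ref{EAtils}, realizing it as the center of an explicit localization and often finding it to be a polynomial or Laurent polynomial ring generated by cosets of normal elements), compute $f_{JK}$, and check the separation property by inspection. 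The structural results announced in the introduction — Auslander--Gorenstein and Cohen--Macaulay prime factors, and the (almost always) noncommutative UFD property of the $H$-prime factors — are precisely what make the relevant prime ideals, and hence this last verification, manageable in the three worked cases, and I expect essentially all the real work to lie there.
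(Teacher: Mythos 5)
Your outline is correct and, for the reduction, coincides in substance with the paper's Proposition \ref{conjreduc}: both sides of \eqref{JKconj} send $\varnothing$ to $\varnothing$ and preserve finite unions, so one checks them on closures of single points $P\in\spec_J A$, where $\phi^s_{JK}$ yields $V(P)\cap\spec_K A$ and $f_{JK}^\circ\hightop g_{JK}^\circ$ yields $\{Q\in\spec_K A \mid f_{JK}(PA_J\cap Z_{JK})\subseteq QA_K\}$; your containment criterion is exactly the paper's \eqref{inclwish} (which the paper rewrites as $(P/J)\cap Z_{JK}\cdot\calE_{JK}\subseteq Q/J\Rightarrow P\subseteq Q$), your unconditional forward implication is the paper's automatic inclusion $\phi^s_{JK}(Y)\subseteq(f_{JK}^\circ\hightop g_{JK}^\circ)(Y)$, and your passage from the $\phi^s$ to the $\phi^p$ case is the easy half of Proposition \ref{conjreduc}(b),(c), correctly resting on \eqref{VpPprimclosure} and the Jacobson property of $Z(A_J)$ (your decomposition via noetherianity of $Z(A_J)$ is in fact slightly cleaner than the paper's passage through minimal primes of $A$). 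The genuine divergence is in how the remaining, conjectural implication is to be verified: you propose computing $Z_{JK}$ and $f_{JK}$ explicitly and checking separation by inspection, whereas the paper interposes Proposition \ref{normgen} — if $P/J$ is generated by normal elements, pick a normal $p\in P\setminus Q$, take a homogeneous component $c_1\notin K$, note $c_1\in\calE_{JK}$ and $pc_1^{-1}\in Z_{JK}$, so $p\in Z_{JK}\cdot\calE_{JK}\setminus Q$ and \eqref{inclwish} holds — which requires no knowledge of $Z_{JK}$ or $f_{JK}$ at all. This is precisely how the UFD and regular-normal-sequence results you allude to are deployed: the three verifications (Example \ref{oqgl2}, Theorems \ref{P/Jnormgen} and \ref{secondP/Jnormgen}) establish normal generation of each $P/J$ and then quote Propositions \ref{conjreduc} and \ref{normgen}. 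Your computational route would also work for these algebras (and the paper does exhibit the $Z_{J,K}$ for $\Oq(GL_2(k))$), but the normal-generation criterion is the labor-saving bridge missing from your plan, and it is what makes the $\OqSLth$ and $\Oq(M_2(k))$ cases tractable.
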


In all the examples we have computed, the algebras $Z_{JK}$ are affine, so that the homomorphisms $f_{JK}$ and $g_{JK}$ arise from morphisms among the affine varieties $\max Z(A_J)$ and $\max Z_{JK}$. Thus, if Conjecture \ref{fJKgJKconj} and the aforementioned affineness hold, the topological spaces $\spec A$ and $\prim A$ are determined (via the framework of Section \ref{strat.topo}) by a finite amount of classical data.


\section{Reduction to inclusion control}
\label{control}

Here we establish conditions under which Conjecture \ref{fJKgJKconj} holds. These conditions, expressed in terms of inclusions involving certain prime ideals, are shown to hold when suitable prime ideals in factor algebras are generated by normal elements. As a first instance, we verify the latter conditions in the case of $\Oq(GL_2(k))$.

\begin{proposition}  \label{conjreduc} 
Impose Assumptions {\rm\ref{AHetc}}, and let $J \subset K$ in $\Hspec A$. Write
$Z_{JK} \cdot \calE_{JK} = \{ zc \mid z \in Z_{JK},\; c \in \calE_{JK} \}$.

{\rm(a)} Conjecture {\rm\ref{fJKgJKconj}} holds for $\phi^s_{JK}$ if and only if 
\begin{equation}  \label{inclwish}
(P/J) \cap Z_{JK} \cdot \calE_{JK} \subseteq Q/J \implies P \subseteq Q 
\end{equation} 
for all $P \in \spec_J A$ and
$Q \in \spec_K A$.

{\rm(b)} Conjecture {\rm\ref{fJKgJKconj}} holds for $\phi^p_{JK}$ if and only if the implication {\rm\eqref{inclwish}}
holds for all $P \in \spec_J A$ and $Q \in \prim_K A$.

{\rm(c)} Conjecture {\rm\ref{fJKgJKconj}} holds for $\phi^s_{JK}$ if and only if it holds  for $\phi^p_{JK}$.
\end{proposition}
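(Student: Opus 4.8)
The plan is to prove all three parts together, by reducing the validity of \eqref{JKconj} to its validity on ``irreducible'' closed sets and then computing both sides there explicitly. Throughout I identify $\spec_J A$, $\spec_K A$, $\prim_J A$, $\prim_K A$ with $\spec Z(A_J)$, $\spec Z(A_K)$, $\max Z(A_J)$, $\max Z(A_K)$ as in Conjecture \ref{fJKgJKconj}, writing $p(P) := (P/J)A_J\cap Z(A_J)$ and $q(Q) := (Q/K)A_K\cap Z(A_K)$; recall that these identifications are order isomorphisms for inclusion, that $(P/J)A_J\cap(A/J) = P/J$ for $P\in\spec_J A$ and $(Q/K)A_K\cap(A/K) = Q/K$ for $Q\in\spec_K A$, and that $(P/J)A_J$ is prime. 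Since $Z(A_J)$ is noetherian, every closed subset of $\spec_J A$ is a finite union of sets $\overline{\{P\}}$ with $P\in\spec_J A$, and every closed subset of $\prim_J A$ is a finite union of sets $V_p(P)\cap\prim_J A$ with $P\in\spec_J A$. All of $\phi^s_{JK}$, $\phi^p_{JK}$ (by Lemma \ref{topostrat}(b)) and $f_{JK}^\circ\hightop g_{JK}^\circ$ (directly, since $g_{JK}^\circ$ and the inverse-image operation of $f_{JK}^\circ$ preserve finite unions and closure commutes with finite unions) preserve finite unions and $\varnothing$. Hence \eqref{JKconj} holds for $\phi^s_{JK}$ iff it holds at every $Y = \overline{\{P\}}$, and for $\phi^p_{JK}$ iff it holds at every $Y = V_p(P)\cap\prim_J A$.

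Next I would identify the two sides on such a set. Fix $P\in\spec_J A$. For $Y = \overline{\{P\}}$ one has $\overline{Y} = V(P)$ in $\spec A$, so $\phi^s_{JK}(Y) = \{Q\in\spec_K A : P\subseteq Q\}$; for $Y = V_p(P)\cap\prim_J A$, \eqref{VpPprimclosure} gives $\overline{Y} = V_p(P)$ in $\prim A$, so $\phi^p_{JK}(Y) = \{Q\in\prim_K A : P\subseteq Q\}$. On the other side, $Y$ corresponds to $V(p(P))$ (or its trace on the relevant maximal spectrum), so $g_{JK}^\circ(Y)$ has closure $V(p(P)\cap Z_{JK})$ — using, in the primitive case, that $Z(A_J)$ is Jacobson so that $\bigcap\{\mathfrak m\in\max Z(A_J) : \mathfrak m\supseteq p(P)\} = p(P)$ — and therefore
\[
\bigl(f_{JK}^\circ\hightop g_{JK}^\circ\bigr)(Y) = \{Q : f_{JK}(z)\in q(Q)\text{ for all }z\in p(P)\cap Z_{JK}\},
\]
with $Q$ ranging over $\spec_K A$, respectively $\prim_K A$.

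The crux is then the following \emph{core identity}: for $P\in\spec_J A$ and $Q\in\spec_K A$,
\[
\bigl[\,f_{JK}(z)\in q(Q)\text{ for all }z\in p(P)\cap Z_{JK}\,\bigr] \iff (P/J)\cap Z_{JK}\cdot\calE_{JK}\subseteq Q/J.
\]
I would prove it in two moves. First, for one $z\in Z_{JK}$ and a choice of $c\in\calE_{JK}$ with $zc\in A/J$, we have $f_{JK}(z)\in q(Q)\iff zc\in Q/J$: by Lemma \ref{ZJK+map}, $f_{JK}(z) = \pi_{JK}(c)^{-1}\pi_{JK}(zc)$ with $\pi_{JK}(c)$ a unit of $A_K$, whence $f_{JK}(z)\in q(Q) = (Q/K)A_K\cap Z(A_K)$ iff $\pi_{JK}(zc)\in(Q/K)A_K\cap(A/K) = Q/K$ iff $zc\in Q/J$; and this is independent of $c$, because any two choices $c_1,c_2$ satisfy $c_1(zc_2) = zc_1c_2 = c_2(zc_1)$ in $A/J$ while no element of $\calE_{JK}$ lies in $Q/J$ (if $c\in\calE_{JK}\cap(Q/J)$ then $(A/J)c(A/J)$ is a nonzero $H$-ideal contained in $Q/J$ but not in $K/J$, contradicting $Q\in\spec_K A$). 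Given this: for ``$\Rightarrow$'' of the core identity, if $z\in p(P)\cap Z_{JK}$ and $zc\in A/J$ then $zc\in(P/J)A_J\cap(A/J) = P/J$, so $zc\in(P/J)\cap Z_{JK}\cdot\calE_{JK}\subseteq Q/J$; for ``$\Leftarrow$'', take $w\in(P/J)\cap Z_{JK}\cdot\calE_{JK}$, write $w = zc$ with $z\in Z_{JK}$, $c\in\calE_{JK}$, note that $zc = w\in P/J\subseteq(P/J)A_J$ forces $z\in(P/J)A_J$ or $c\in(P/J)A_J = P/J$ by primeness, that the second alternative is excluded by the same $H$-ideal argument (now with $P\in\spec_J A$, whose $H$-core is $J$), and conclude $z\in p(P)\cap Z_{JK}$, so $zc = w\in Q/J$ by hypothesis.

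Combining the last two steps, \eqref{JKconj} holds at $Y = \overline{\{P\}}$ (resp.\ at $V_p(P)\cap\prim_J A$) iff $\{Q : P\subseteq Q\} = \{Q : (P/J)\cap Z_{JK}\cdot\calE_{JK}\subseteq Q/J\}$ as subsets of $\spec_K A$ (resp.\ $\prim_K A$); the inclusion ``$\subseteq$'' is automatic, since $P\subseteq Q$ forces $(P/J)\cap Z_{JK}\cdot\calE_{JK}\subseteq P/J\subseteq Q/J$, so the equality is exactly the implication \eqref{inclwish} for this $P$ and all such $Q$. Ranging over $P$ and invoking the reduction to irreducibles yields (a) and (b). Finally, (c) follows from (a) and (b) once one sees that \eqref{inclwish} for all $Q\in\spec_K A$ and for all $Q\in\prim_K A$ are equivalent: one direction is trivial, and for the other, given $P\in\spec_J A$ and $Q\in\spec_K A$ with $(P/J)\cap Z_{JK}\cdot\calE_{JK}\subseteq Q/J$, write $Q = \bigcap\{L\in\prim_K A : L\supseteq Q\}$ via \eqref{PcapVpPprimJ} applied to $K$; each such $L$ satisfies $(P/J)\cap Z_{JK}\cdot\calE_{JK}\subseteq L/J$, hence $P\subseteq L$ by hypothesis, hence $P\subseteq Q$. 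I expect the main obstacle to be the core identity in the third paragraph — specifically, extracting the well-definedness of ``$z\mapsto zc \bmod Q/J$'' and the non-membership of $\calE_{JK}$ in the stratum primes from the $H$-core description of strata, and handling correctly the passage from a condition on individual elements $z$ to a condition on the whole set $(P/J)\cap Z_{JK}\cdot\calE_{JK}$.
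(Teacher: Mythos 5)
Your argument is correct and essentially the paper's own proof: the same reduction to closed sets determined by a single $P \in \spec_J A$ via preservation of finite unions, the same computation of $\phi^\bullet_{JK}(Y)$ and of $(f_{JK}^\circ \hightop g_{JK}^\circ)(Y)$ (including the use of \eqref{VpPprimclosure} and the Jacobson property of $Z(A_J)$ in the primitive case), and the same intersection-of-primitives argument for (c); your primeness-of-$(P/J)A_J$ step is just a slightly more explicit justification of the paper's identification of $(P/J)A_J \cap Z_{JK}$ with $\{ zc \in P/J \} $ data. One cosmetic caveat: your auxiliary independence-of-$c$ check is redundant (well-definedness of $f_{JK}$ already gives it) and its middle equality $zc_1c_2 = c_2(zc_1)$ fails when $c_1$, $c_2$ do not commute, and your $\Rightarrow$/$\Leftarrow$ labels in the core identity are interchanged --- but nothing in the proof depends on these points.
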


\begin{proof} Since the closed sets in $\spec_L A$ and $\prim_L A$, for $H$-primes $L \supseteq J$, have the forms
\begin{align*} V(I)\cap \spec_L A &= V(I+J)\cap \spec_L A  \\ V_p(I)\cap \prim_L A &= V_p(I+J)\cap \prim_L A
\end{align*} 
for ideals $I$ of $A$, there is no loss of generality in assuming that $J=0$.

Let us label the homeomorphism $\spec_J A \rightarrow \spec Z(A_J)$ of Theorem \ref{strat.thm} in the form $T \mapsto
T^* := TA_J \cap Z(A_J)$, and similarly for the homeomorphism $\spec_K A \rightarrow \spec Z(A_K)$. The restrictions
of these maps to homeomorphisms from $\prim_J A$ and $\prim_K A$ onto $\max Z(A_J)$ and $\max Z(A_K)$, respectively,
are then also given in the form $T \mapsto T^*$.

(a) We are aiming to characterize the condition 
\begin{equation}  \label{expandJKconj} 
\phi^s_{JK}(Y) = (f_{JK}^\circ
\hightop g_{JK}^\circ)(Y) \text{\;for all\;} Y \in \CL(\spec_J A) 
\end{equation} 
by means of \eqref{inclwish}. Any $Y
\in \CL(\spec_J A)$ has the form $Y = V(I) \cap \spec_J A$ for some ideal $I$ of $A$. Now $V(I) = V(P_1) \cup
\cdots\cup V(P_n)$ where $P_1,\dots, P_n$ are the primes of $A$ minimal over $I$, so $Y$ is the union of the closed
sets $Y_i := V(P_i) \cap \spec_J A$. Since $\phi^s_{JK}$ and $f_{JK}^\circ \hightop g_{JK}^\circ$ preserve finite
unions, they agree on $Y$ if and only if they agree on each $Y_i$. Thus, \eqref{expandJKconj} holds if and only if
$\phi^s_{JK}(Y) = (f_{JK}^\circ \hightop g_{JK}^\circ)(Y)$ for all $Y = V(P)\cap \spec_J A$, where $P$ is a prime of
$A$ that contains $J$. If $P \notin \spec_J A$, then $P$ must lie in $\spec_L A$ for some $H$-prime $L \supsetneq J$,
in which case $Y$ is empty. That case is no problem, since $\phi^s_{JK}(\varnothing) = \varnothing = (f_{JK}^\circ
\hightop g_{JK}^\circ)(\varnothing)$. Hence, we conclude that \eqref{expandJKconj} holds if and only if
\begin{equation}  \label{VPreduce} 
\begin{aligned} 
\phi^s_{JK}(Y) &= (f_{JK}^\circ \hightop g_{JK}^\circ)(Y)
\text{\;for all\;} Y \text{\;of the form\;}  \\ Y &= V(P)\cap \spec_J A \text{\;with\;} P \in \spec_J A. 
\end{aligned}
\end{equation}

We next characterize the sets $\phi^s_{JK}(Y)$ and $(f_{JK}^\circ \hightop g_{JK}^\circ)(Y)$ appearing in
\eqref{VPreduce}, i.e., we assume that $Y = V(P)\cap \spec_J A$ for some $P \in \spec_J A$. Since $P \in Y \subseteq
V(P)$ and $\overline{ \{P\} } = V(P)$, we see that $\Ybar = V(P)$, and hence 
\begin{equation}  \label{phiJKY}
\phi^s_{JK}(Y) = V(P) \cap \spec_K A. 
\end{equation}

For $Q \in \spec_K A$, we have $Q \in (f_{JK}^\circ \hightop g_{JK}^\circ)(Y)$ if and only if $f_{JK}^\circ(Q^*) \in
\overline{ g_{JK}^\circ(Y) }$. On one hand, $f_{JK}^\circ(Q^*) = f_{JK}^{-1}(Q^*)$. On the other hand, since the set
$g_{JK}^\circ(Y) = \{ T^*\cap Z_{JK} \mid T \in Y \}$ has a unique smallest element, namely $P^*\cap Z_{JK}$, the
closure of $g_{JK}^\circ(Y)$ in $\spec Z_{JK}$ is just the set of primes of $Z_{JK}$ that contain $P^*\cap Z_{JK}$.
Thus, 
$$Q \in (f_{JK}^\circ \hightop g_{JK}^\circ)(Y) \iff f_{JK}^{-1}(Q^*) \supseteq P^*\cap Z_{JK}.$$ Note that
$P^*\cap Z_{JK} = PA_J \cap Z(A_J) \cap Z_{JK} = PA_J \cap Z_{JK}$. Since $f_{JK}(P^*\cap Z_{JK}) \subseteq Z(A_K)$,
we have $f_{JK}(P^*\cap Z_{JK}) \subseteq Q^*$ if and only if $f_{JK}(P^*\cap Z_{JK}) \subseteq QA_K$. Hence, 
$$Q \in
(f_{JK}^\circ \hightop g_{JK}^\circ)(Y) \iff f_{JK}(PA_J\cap Z_{JK}) \subseteq QA_K \,.$$

Given $Q \in \spec_K A$, we want to show that $f_{JK}(PA_J\cap Z_{JK}) \subseteq QA_K$ if and only if $P \cap Z_{JK}
\cdot \calE_{JK} \subseteq Q$. To do so, we first observe that 
\begin{equation}  \label{PAJcapZJK} 
PA_J\cap Z_{JK} =
\{ pc^{-1} \mid p \in P,\; c\in \calE_{JK} \} \cap Z_{JK} \,. 
\end{equation} 
The inclusion $(\supseteq)$ is clear. If
$z \in PA_J\cap Z_{JK}$, there is some $c \in \calE_{JK}$ such that $zc \in A$, whence $zc \in PA_J \cap A = P$. This
establishes $(\subseteq)$ and \eqref{PAJcapZJK}. Consequently, 
$$f_{JK}(PA_J\cap Z_{JK}) = \{ \pi(p) \pi(c)^{-1} \mid
p \in P,\; c\in \calE_{JK},\; pc^{-1} \in Z_{JK} \}.$$ 
For $p \in P$ and $c\in \calE_{JK}$, we have $\pi(p)
\pi(c)^{-1} \in QA_K$ if and only if $\pi(p) \in QA_K$, if and only if $\pi(p) \in QA_K \cap (A/K) = Q/K$, if and only
if $p \in Q$. Thus, 
\begin{equation*} 
\begin{aligned} 
f_{JK}&(PA_J\cap Z_{JK}) \subseteq QA_K  \\
 &\iff \{ p \in P\mid pc^{-1} \in Z_{JK} \text{\; for some\;} c \in \calE_{JK} \} \subseteq Q \\
 &\iff P \cap Z_{JK} \cdot \calE_{JK} \subseteq Q,
 \end{aligned}
 \end{equation*}
as desired.

On combining the results above, we obtain 
\begin{equation}  \label{descrhightopY} 
(f_{JK}^\circ \hightop
g_{JK}^\circ)(Y) = \{ Q \in \spec_K A \mid P \cap Z_{JK} \cdot \calE_{JK}  \subseteq Q\}. 
\end{equation} 
It is clear
from \eqref{phiJKY} and \eqref{descrhightopY} that $\phi^s_{JK}(Y) \subseteq (f_{JK}^\circ \hightop g_{JK}^\circ)(Y)$.
Therefore \eqref{expandJKconj} holds if and only if 
\begin{equation}  \label{inclcond} 
\{ Q \in \spec_K A \mid P \cap
Z_{JK} \cdot \calE_{JK} \subseteq Q\} \subseteq V(P) 
\end{equation} 
for all $P \in \spec_J A$ and $Q\in \spec_K A$.
This completes the proof of (a).

(b) The proof is the same as for (a), modulo changing $V(-)$ to $V_p(-)$ throughout, except for two points. Namely, if
$P \in \spec_J A$ and $Y = V_p(P) \cap \prim_J A$, we need to know that $\Ybar = V_p(P)$ in $\prim A$ and that
\begin{equation}  \label{gJKYclosure} 
\overline{g_{JK}^\circ(Y)} = \{ M \in \max Z_{JK} \mid M \supseteq P^*\cap
Z_{JK} \} 
\end{equation} 
in $\max Z_{JK}$. The first statement is given by \eqref{VpPprimclosure}.

Now set $Y^* := \{ T^* \mid T \in Y \}$, which is a closed subset of $\max Z(A_J)$. Obviously $T^* \supseteq P^*$ for
all $T^* \in Y^*$. On the other hand, if $M \in \max Z(A_J)$ with $M \supseteq P^*$, then $M=T^*$ for some $T \in
\prim_J A$, and $T^*$ belongs to the closure of $\{ P^* \}$ in $\spec Z(A_J)$. It follows that $T$ must belong to the
closure of $\{ P \}$ in $\spec_J A$, yielding $T \supseteq P$ and $T \in Y$. Thus, 
$$Y^* = \{ M \in \max Z(A_J) \mid M
\supseteq P^* \}.$$ 
Since $Z(A_J)$ is a commutative affine algebra, it is a Jacobson ring, and so we must have $P^* =
\bigcap Y^*$. Consequently, 
$$P^* \cap Z_{JK} = \bigcap\, \{T^* \cap Z_{JK} \mid T \in Y \} = \bigcap g_{JK}^\circ(Y),$$
and \eqref{gJKYclosure} follows.

(c) If \eqref{inclwish} holds for $P \in \spec_J A$ and $Q \in \spec_K A$, then it holds a priori for $P \in \spec_J
A$ and $Q \in \prim_K A$. Conversely, assume that \eqref{inclwish} holds for $P \in \spec_J A$ and $Q \in \prim_K A$.
Let $P \in \spec_J A$ and $Q \in \spec_K A$ such that $(P/J) \cap Z_{JK} \cdot \calE_{JK} \subseteq Q/J$. If $Q' \in
\prim_K A$ and $Q \subseteq Q'$, then $(P/J) \cap Z_{JK} \cdot \calE_{JK} \subseteq Q'/J$, and so $P \subseteq Q'$ by
our assumption. By \eqref{VpPprimclosure}, the intersection of those $Q' \in \prim_K A$ that contain $Q$ equals $Q$,
whence $P \subseteq Q$. This verifies that \eqref{inclwish} holds for $P \in \spec_J A$ and $Q \in \spec_K A$.
\end{proof}

\begin{proposition}  \label{normgen} 
Impose Assumptions {\rm\ref{AHetc}}, let $J \subset K$ in $\Hspec A$, and let $P
\in \spec_J A$. If $P/J$ is generated by some set of normal elements of $A/J$, then {\rm\eqref{inclwish}} holds for
all $Q \in \spec_K A$. 
\end{proposition}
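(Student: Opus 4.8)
The proof splits into a sequence of reductions followed by one substantive step about normal elements.

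\smallskip
\noindent\textit{Reductions.} Exactly as at the start of the proof of Proposition~\ref{conjreduc}, the closed sets entering \eqref{inclwish} are unaffected by adjoining $J$, so there is no harm in assuming $J=0$; thus $A$ is prime, $A_0=A[\calE_0^{-1}]$ is $H$-simple, and $\spec_0 A\cong\spec Z(A_0)$ with $Z(A_0)$ a Laurent polynomial ring. Since $P$ is generated by its normal elements, $P=\sum_\lambda Ap_\lambda A$ with each $Ap_\lambda=p_\lambda A$; hence it suffices, for each normal generator $p:=p_\lambda$ and each $Q\in\spec_K A$ with $P\cap Z_{0K}\cdot\calE_{0K}\subseteq Q$, to show $p\in Q$. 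If $p\in K$ this is immediate, so assume $p\notin K$; then $p\neq 0$, and since a nonzero normal element of a prime ring is regular, $p$ is a regular normal element of $A$ and $\pi_K(p)$ is a regular normal element of $A/K$.

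\smallskip
\noindent\textit{The plan.} I would pass to $A_0$: as $PA_0$ is prime and $A_0$ is $H$-simple, $PA_0=P^*A_0$ where $P^*:=PA_0\cap Z(A_0)$ is a finitely generated ideal of $Z(A_0)$. Then I would (i) produce generators $z_1,\dots,z_r$ of $P^*$ lying in $Z_{0K}$; (ii) clear denominators — using that $\calE_{0K}$ is multiplicatively closed and that $Z_{0K}$ is closed under the operations in \eqref{ZJKsubalg} — to obtain $w_j:=z_je\in A$ with a common $e\in\calE_{0K}$, so that $w_j\in P\cap Z_{0K}\cdot\calE_{0K}$ while $\{w_j\}$ still generates $PA_0$ over $A_0$; (iii) set $\mathfrak c:=\sum_j Aw_jA\subseteq P$, note $\mathfrak cA_0=PA_0$, whence $P=\mathfrak cA_0\cap A$; and (iv) deduce from $p\in P=\mathfrak cA_0\cap A$ that $pd\in\mathfrak c$ for some $d$, with $d$ chosen in $\calE_{0K}$. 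Granting (i)--(iv): if $Q\in\spec_K A$ and $P\cap Z_{0K}\cdot\calE_{0K}\subseteq Q$, then each $w_j\in Q$, so $\mathfrak c\subseteq Q$ and $pd\in Q$; since $Q\in\spec_K A$ forces $\pi_K(\calE_{0K})=\calE_K$ to be disjoint from $Q/K$, we get $d\notin Q$, and primeness of $Q$ gives $p\in Q$. (Incidentally this argument also shows $P$ is generated, as a two-sided ideal, by $P\cap Z_{0K}\cdot\calE_{0K}$.) Assembling over $\lambda$ then yields $P=\sum_\lambda Ap_\lambda A\subseteq Q$, i.e.\ \eqref{inclwish}.

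\smallskip
\noindent\textit{The obstacle.} The hard part is steps (i) and (iv): arranging that the relevant denominators lie in $\calE_{0K}$ (homogeneous elements \emph{regular modulo $K$}) rather than merely in $\calE_0$, which is all that localizing to form $A_0$ supplies. This is exactly where the normality hypothesis is used. The ideals $p_\lambda A_0$ are invertible, and $p_\lambda\notin K$ forces $\pi_K(p_\lambda)$ to be a regular normal — hence essential — ideal generator in $A/K$; so any $\calE_0$-denominator can be multiplied by a suitable cofactor so as to land inside the essential ideal $\pi_K(p_\lambda)(A/K)$, after which a graded Goldie argument produces a replacement denominator inside $\calE_{0K}$. (When $P$ is generated by \emph{central} elements the difficulty disappears, since then $P$ is visibly generated by $P\cap Z(A)\subseteq P\cap Z_{0K}\cdot\calE_{0K}$; the normal case is a controlled elaboration of this.)
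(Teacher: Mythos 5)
Your reductions and your final deduction (granting elements $w_j\in P\cap Z_{0K}\cdot\calE_{0K}$ generating $PA_0$, and a denominator $d\in\calE_{0K}$ with $pd\in\mathfrak{c}$) are sound, but the proof has a genuine gap exactly where you flag ``the obstacle'': steps (i) and (iv) are the entire content of the proposition, and the paragraph meant to dispose of them is not an argument. Nothing you say shows that the prime $P^*=PA_0\cap Z(A_0)$ has generators lying in $Z_{0K}$, nor that a denominator supplied by the Ore condition in $A_0$ --- which a priori lies only in $\calE_0$, i.e.\ is regular in $A$ but possibly not modulo $K$ --- can be traded for one in $\calE_{0K}$. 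The suggestion to multiply a denominator into the essential ideal of $A/K$ generated by the image of $p$ and then invoke ``a graded Goldie argument'' does not produce such a trade: elements of that ideal need not be homogeneous or regular in $A/K$, and invertibility of $pA_0$ inside $A_0$ carries no information modulo $K$. Since a prime $Q\in\spec_K A$ certainly contains elements of $\calE_0$ (it contains $K\neq 0$), the argument cannot close without this control, so as written it does not go through.

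The idea you are missing is more elementary and makes the localization bookkeeping unnecessary; it is how the paper argues. Work contrapositively: if $P\nsubseteq Q$, pick a normal generator $p\in P\setminus Q$ and write $p=c_1+\cdots+c_n$ as a sum of nonzero homogeneous components of distinct $X(H)$-degrees. Since $p\notin K$, some component, say $c_1$, lies outside $K$. By standard facts about rational torus actions (e.g.\ \cite[Proposition 6.20]{Ymem}), all the $c_i$ are normal with the \emph{same} normalizing automorphism $\phi$ as $p$, i.e.\ $pa=\phi(a)p$ and $c_ia=\phi(a)c_i$ for all $a\in A$. Hence $c_1$ is regular in $A$ and regular modulo $K$, so $c_1\in\calE_0\cap\calE_{0K}$, and $z:=pc_1^{-1}$ commutes with every $\phi(a)$ and therefore lies in $Z(A_0)$; since $zc_1=p\in A$, in fact $z\in Z_{0K}$. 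Thus $p=zc_1\in P\cap Z_{0K}\cdot\calE_{0K}$ while $p\notin Q$, which is precisely the contrapositive of \eqref{inclwish}. In particular the normality hypothesis enters through this homogeneous-component decomposition, not through invertibility of $pA_0$ or any denominator-replacement procedure.
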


\begin{proof} We may assume that $J = 0$.

Suppose $Q \in \spec_K A$ and $P \nsubseteq Q$. Then there is a normal element $p \in P \setminus Q$. Write $p = c_1+
\cdots+ c_n$ where the $c_i$ are nonzero homogeneous elements with distinct degrees. Since $p$ is not in $Q$, it is
not in $K$, so the $c_i$ cannot all lie in $K$. We may assume that $c_1 \notin K$. By standard results (e.g.,
\cite[Proposition 6.20]{Ymem}), all the $c_i$ are normal; in fact, there is an automorphism $\phi$ of $A$ such that
$pa = \phi(a)p$ and $c_ia = \phi(a)c_i$ for all $a \in A$ and all $i$. In particular, $c_1$ is regular in $A$ and
regular modulo $K$, so that $c_1 \in \calE _J \cap \calE_{JK}$.

For any $a\in A$, we have $pc_1^{-1}\phi(a) = pac_1^{-1} = \phi(a)pc_1^{-1}$ in $\Fract A$. Hence, the element $z :=
pc_1^{-1}$ lies in $Z(A_J)$. The fact that $zc_1= p \in A$ now implies $z \in Z_{JK}$. Consequently, $p \in Z_{JK}
\cdot \calE_{JK}$, and therefore $P\cap Z_{JK} \cdot \calE_{JK} \nsubseteq Q$. 
\end{proof}

\begin{example}  \label{oqgl2} 
Let $A = \Oq(GL_2(k))$ with $q \in \kx$ not a root of unity, and use the standard
abbreviations for the generators of $A$, namely 
$$\begin{matrix} a&b\\ c&d \end{matrix} \quad := \quad \begin{matrix}
X_{11}&X_{12}\\ X_{21}&X_{22} \end{matrix}$$ 
and $\Delta^{-1}$, where $\Delta := ad-qbc$ denotes the quantum
determinant in $A$. There is a standard rational action of $H = (\kx)^4$ on $A$ such that 
\begin{equation}
\label{kx4action} 
(\alpha_1,\alpha_2,\beta_1,\beta_2) . X_{ij} = \alpha_i \beta_j X_{ij} \qquad\text{for\;} i,j=1,2.
\end{equation} 
As is well known, $A$ has exactly four $H$-primes, and the poset $\Hspec A$ may be displayed in the
following form, where we abbreviate the descriptions of the $H$-prime ideals by omitting angle brackets and commas.
For instance, $bc$ stands for $\langle b,c\rangle$. 
$$\xymatrixrowsep{1.2pc}\xymatrixcolsep{0.6pc} \xymatrix{
 &bc \edge[dl] \edge[dr]  \\
b \edge[dr] &&c \edge[dl]  \\
 &0
}$$ 
Finally, $A$ satisfies the noncommutative Nullstellensatz by \cite[Corollary II.7.18]{BG}, and so Assumptions
\ref{AHetc} hold.

Define the following multiplicative sets consisting of homogeneous normal elements: 
\begin{align*}
 \calEtil_0 &:=
\{\kx b^\bullet c^\bullet \Delta^\bullet \} \subseteq \calE_0  &\calEtil_b &:= \{ \kx c^\bullet \Delta^\bullet \}
\subseteq \calE_b  \\ \calEtil_c &:= \{ \kx b^\bullet \Delta^\bullet \} \subseteq \calE_c  &\calEtil_{bc} &:= \{ \kx
\Delta^\bullet \} \subseteq \calE_{bc} \,, 
\end{align*} 
where $x^\bullet$ abbreviates ``arbitrary nonnegative powers
of $x$'' and elements are interpreted as cosets where appropriate, and set $\Atil_J := (A/J)[\calEtil_J^{-1}]$.
Observe that each nonzero $H$-prime of $A/J$ has nonempty intersection with $\calEtil_J$. Hence,   Lemma
\ref{shrinkEJ}(c) shows that $Z(\Atil_J) = Z(A_J)$. These centers have the following forms: 
\begin{align*} 
Z(A_0) &=
k[(bc^{-1})^{\pm1}, \Delta^{\pm1}]  &Z(A_b) &= k[(ad)^{\pm1}]  \\ Z(A_c) &= k[(ad)^{\pm1}]  &Z(A_{bc}) &=
k[a^{\pm1},d^{\pm1}]. 
\end{align*}

Next, set $\calEtil_{J,K} := \calEtil_J \setminus K$ for $H$-primes $J \subset K$, and observe that \begin{align*}
\calEtil_{0,b} &= \{ \kx c^\bullet \Delta^\bullet \}  &\calEtil_{0,c} &= \{ \kx b^\bullet \Delta^\bullet \}
&\calEtil_{0,bc} &= \{ \kx \Delta^\bullet \}  \\ \calEtil_{b,bc} &= \{ \kx (ad)^\bullet \} &\calEtil_{c,bc} &= \{ \kx
(ad)^\bullet \} \,. 
\end{align*} 
Moreover, $\pi_{J,K}(\calEtil_{J,K}) = \calEtil_K$, and hence $Z_{J,K} = Z \bigl(
(A/J)[\calEtil_{J,K}^{-1}] \bigr)$ by Lemma \ref{EAtils}. These algebras have the following descriptions:
\begin{align*} 
Z_{0,b} &= k[ bc^{-1}, \Delta^{\pm1}]  &Z_{0,c} &= k[ b^{-1}c, \Delta^{\pm1}]  &Z_{0,bc} &= k[
\Delta^{\pm1}]  \\ Z_{b,bc} &= k[ (ad)^{\pm1}]  &Z_{c,bc} &= k[ (ad)^{\pm1}]  \,. 
\end{align*} 
The maximal ideal
spaces of the $Z(A_J)$ and the $Z_{J,K}$ are copies of the affine varieties $\kx$, $(\kx)^2$, and $k \times \kx$. We
can picture these spaces together with the associated maps $f_{J,K}^\circ$ and $g_{J,K}^\circ$ as in Figure
\ref{figprimoqgl2} below.

\begin{figure}[htp] 
$$\xymatrixrowsep{4pc}\xymatrixcolsep{1.8pc} \xymatrix{ \save [0,0]+(-5,5);[2,1]+(10,-5)
**\frm{.}\restore &\kx &&\save [0,0]+(-5,5);[2,2]+(5,-5) **\frm{.}\restore &(\kx)^2 \ar[lll]_(0.62){\txt{mult}}
\ar[dllll]_(0.72){\txt{mult}\;} \ar[rrr]^(0.62){\txt{mult}} &&&\kx \save [0,0]+(-10,5);[2,0]+(10,-5) **\frm{.}\restore
\\ \kx &&&\kx \ar[ull]^(0.35){\txt{id}} \ar[dll]_(0.29){(0,-)} &&\kx \ar[urr]^(0.45){\txt{id}} \ar[drr]^(0.29){(0,-)}
\\ \dropdownright{\max Z_{J,K}} \dropdownrightmore{(J=0,0,b)} {\save+<8ex,-14ex>\drop{(K=b,bc,bc)}\restore}
&k\times\kx  &&&(\kx)^2 \dropdown{\max Z(A_J)} \dropdownmore{(J=0,b,c,bc)}
 \ar[lll]_(0.62){\txt{incl}} \ar[ullll]_(0.77){\txt{pr}_2} \ar[rrr]^(0.62){\txt{incl}} &&&k\times\kx \dropdown{\max
 Z_{J,K}} \dropdownmore{(J=0,c)} {\save+<0ex,-14ex>\drop{(K = c,bc)}\restore}
 }$$
\caption{$\prim \Oq(GL_2(k))$ with spaces $\max Z_{J,K}$ and maps $f_{J,K}^\circ$, $g_{J,K}^\circ$}
\label{figprimoqgl2} 
\end{figure}
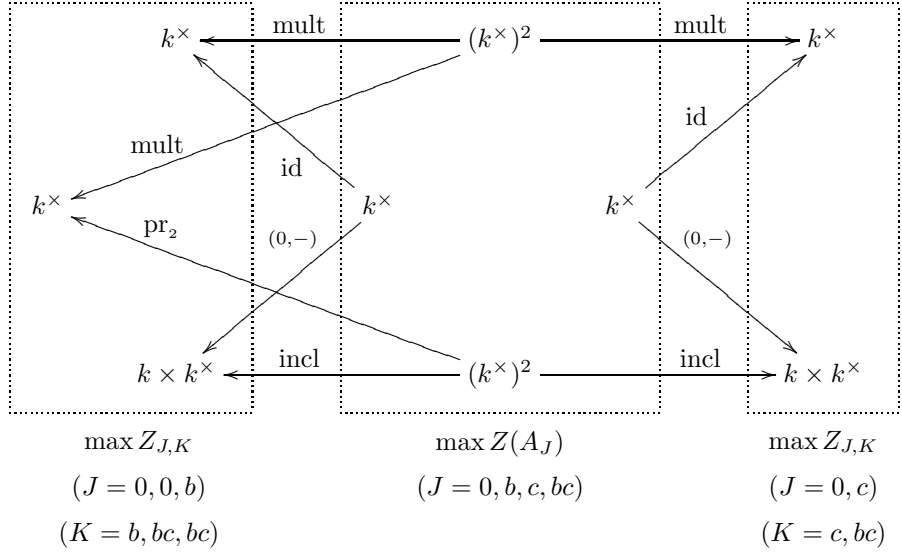

In order to see that the topology on $\prim A$ is determined by this picture, and similarly for the topology on $\spec
A$, we need to show that Conjecture \ref{fJKgJKconj} holds. This will follow from Propositions \ref{conjreduc} and
\ref{normgen} provided we verify that 
\begin{enumerate} \item[$(*)$] 
For each $J \in \Hspec A$ and each non-minimal $P
\in \spec_J A$, the ideal $P/J$ of $A/J$ is
    generated by normal elements.
\end{enumerate}

In the case $J=b$, we find that $P = \langle b,\, ad-\mu \rangle$ for some $\mu \in \kx$. Then $P/J$ is normally
generated because $ad-\mu$ is normal (in fact, central) in $A/J$. The case $J=c$ is exactly analogous. In the case $J=
bc$, the algebra $A/J$ is commutative, so all its ideals are centrally generated.

Finally, consider the case $J=0$. The maximal elements of $\spec_0 A$ are of the form $\langle b-\lambda c,\,
\Delta-\mu \rangle$ for $\lambda,\mu \in \kx$. These ideals are normally generated because $b- \lambda c$ is normal
and $\Delta - \mu$ is central. The remaining nonzero elements of $\spec_0 A$ are height $1$ primes of $A$. Each of
these is generated by a normal element because $A$ is a noncommutative UFD \cite[Corollary 3.8]{LLR}. This finishes
the verification of $(*)$, and we conclude that
 Conjecture \ref{fJKgJKconj} holds for this example.
\end{example}


\section{Quantum $SL_3$} \label{oqsl3}

The purpose of this section is to verify Conjecture \ref{fJKgJKconj} for $\OqSLth$ for generic $q$, thus showing that $\spec \OqSLth$ and $\prim \OqSLth$ can be entirely determined by classical (i.e., commutative) algebro-geometric data. Side benefits of our analysis provide new information about the structure of prime factor algebras, such as that all $H$-prime factors of $\OqSLth$ are noncommutative UFDs. Moreover, as we show in the following section, all prime factors of $\OqSLth$ are Auslander-Gorenstein and GK-Cohen-Macaulay, extending a result of \cite{primOqSL3} from primitive factors to prime factors.

Throughout the section, let $A = \OqSLth$, with $q \in \kx$ not a root of unity, and let $X_{ij}$, for $i,j = 1,2,3$,
denote the standard generators of $A$. Recall that all prime ideals of $A$ are completely prime (e.g., \cite[Corollary
II.6.10]{BG}). There is a natural rational action of the torus 
\begin{equation}  \label{OqSL3torus} 
H := \{
(\alpha_1,\alpha_2,\alpha_3,\beta_1,\beta_2,\beta_3) \mid \alpha_1\alpha_2\alpha_3\beta_1\beta_2\beta_3 = 1 \}
\end{equation} on $A$ such that \begin{equation}  \label{HSLaction}
(\alpha_1,\alpha_2,\alpha_3,\beta_1,\beta_2,\beta_3) \cdot X_{ij} = \alpha_i \beta_j X_{ij} 
\end{equation} 
for
$(\alpha_1,\alpha_2,\alpha_3,\beta_1,\beta_2,\beta_3) \in H$ and $i,j = 1,2,3$. As is well known (see, e.g.,
\cite{primOqSL3}), $A$ has exactly $36$ $H$-primes. Since $A$ satisfies the noncommutative Nullstellensatz
\cite[Corollary II.7.18]{BG}, Assumptions \ref{AHetc} hold.

\begin{subsec} \label{Hprimesindexed} 
As in \cite{primOqSL3}, we index the $H$-primes of $A$ in the form $Q_{w_+,w_-}$
for $(w_+,w_-)$ in $S_3 \times S_3$. Generating sets for these ideals are given in Figure \ref{Hprimegens}, taken from
\cite[Figure 1]{primOqSL3} -- see \cite[Subsection 2.1 and Corollary 2.6]{primOqSL3}. In this figure, bullets and
squares stand for $1\times 1$ and $2\times 2$ quantum minors, respectively, while circles are placeholders.

It is clear from Figure \ref{Hprimegens} that the height of any $H$-prime $Q_w$ is at least as large as the number of
generators $g$ given for $Q_w$ in the figure. On the other hand, these generators can be arranged in a polynormal
sequence, and so by the noncommutative Principal Ideal Theorem (e.g., \cite[Theorem 4.1.11]{McRob}), $\hgt(Q_w) \le
g$. Thus, the height of $Q_w$ exactly equals the number of generators for $Q_w$ given in Figure \ref{Hprimegens}.

The $H$-primes of $A$ are permuted by various symmetries of $A$. We summarize the three discussed in
\cite[\S1.4]{primOqSL3}. First, there is the \emph{transpose automorphism} $\tau$, which satisfies $\tau(X_{ij}) =
X_{ji}$ for $i,j = 1,2,3$; moreover, $\tau([I|J]) = [J|I]$ for all quantum minors $[I|J]$. Second, there is the
antipode $S$ of $A$, which is an anti-automorphism such that $S([I|J]) = (-q)^{\Sigma I - \Sigma J} [\Jtil|\Itil]$ for
all $[I|J]$, where $\Itil := \{1,2,3\} \setminus I$ and similarly for $\Jtil$. Finally, there is an anti-automorphism
$\rho$ of $A$ such that $\rho(X_{ij}) = X_{4-j,4-i}$ for all $i$, $j$; it satisfies $\rho([I|J]) = [w_0(J)|w_0(I)]$
for all $[I|J]$, where $w_0 = (321)$ is the longest element of $S_3$. 

\begin{figure}[htp]
 $$\xymatrixrowsep{0.1pc}\xymatrixcolsep{0.1pc}
\xymatrix{ 
 &&&&\ar@{=}[28,0] \\
 &&&\dropgen{-0.5}0{w_-} \\
 &&& &&&\dropvarup0{321} &&&&\dropvarup0{231} &&&&\dropvarup0{312} &&&&\dropvarup0{132} &&&&\dropvarup0{213}
 &&&&\dropvarup0{123} \\
 &\dropvarup0{w_+} \\
\ar@{=}[0,28] &&&&& &&&&& &&&&& &&&&& &&&&& &&&\dropup{} \\ 
 &&& &&\ccc &&\plc &\hrzvrt &&\ccb &&\cbb &&\ccb &&\cbb \\
 &&\dropvarup0{321} & &&\ccc &&\plc &\hrz &&\ccc &&\ccc
&&\ccb &&\ccb \\
 &&& &&\ccc &&\ccc &&\ccc &&\ccc &&\ccc &&\ccc \\   \\
 &&& &&\ccc &&\plc &\hrzvrt &&\ccb &&\cbb &&\ccb &&\cbb \\
 &&\dropvarup0{231} & &&\ccc &&\plc &\hrz &&\ccc &&\ccc
&&\ccb &&\ccb \\
 &&& &&\bcc &&\bcc &&\bcc &&\bcc &&\bcc &&\bcc \\   \\
 &&& &&\ccc &&\plc &\place \edge[dd] \edge[r] &\place \edge[d] &&\ccb &&\cbb &&\ccb &&\cbb \\
 &&\dropvarup0{312} & &&\hrzvrt &\plc &&\place \edge[rr] \edge[d] &&\place
 &&\hrzvrt &\plc &&\hrzvrt &\plc &&\hrzvrt &\plb &&\hrzvrt &\plb \\
 &&& &&\hrz &\plc &&\hrz &\plc &&\hrz &\plc &&\hrz &\plc
 &&\hrz &\plc &&\hrz &\plc \\   \\
 &&& &&\ccc &&\plc &\hrzvrt &&\ccb &&\cbb &&\ccb &&\cbb \\
 &&\dropvarup0{132} & &&\bcc &&\plb &\hrz &&\bcc &&\bcc
&&\bcb &&\bcb \\
 &&& &&\bcc &&\bcc &&\bcc &&\bcc &&\bcc &&\bcc \\   \\
 &&& &&\ccc &&\plc &\hrzvrt &&\ccb &&\cbb &&\ccb &&\cbb \\
 &&\dropvarup0{213} & &&\ccc &&\plc &\hrz &&\ccc &&\ccc
&&\ccb &&\ccb \\
 &&& &&\bbc &&\bbc &&\bbc &&\bbc &&\bbc &&\bbc \\   \\
 &&& &&\ccc &&\plc &\hrzvrt &&\ccb &&\cbb &&\ccb &&\cbb \\
 &&\dropvarup0{123} & &&\bcc &&\plb &\hrz &&\bcc &&\bcc
&&\bcb &&\bcb \\
 &&& &&\bbc &&\bbc &&\bbc &&\bbc &&\bbc &&\bbc \\
 &&&&&
}$$ \caption{Generators for $H$-prime ideals of $\OqSLth$}   \label{Hprimegens} 
\end{figure}
\end{subsec}

Recall that a \emph{noncommutative unique factorization domain} in the sense of  \cite[Definition, p.50]{Cha},
\cite[Definition, p.23]{ChJo} is a domain $R$ such that each nonzero prime ideal of $R$ contains a \emph{prime
element}, i.e., a nonzero normal element $p$ such that $R/Rp$ is a domain.

\begin{theorem}  \label{HUFDfactors} 
For any $H$-prime $J$ of $A$, the algebra $A/J$ is a noncommutative UFD.
\end{theorem}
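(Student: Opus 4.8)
The plan is to proceed by induction on the height of the $H$-prime $J$, reducing the statement for $A/J$ to a statement about chains of $H$-primes in $A/J$ with a normal generator bridging consecutive steps. Since all prime ideals of $A$ are completely prime, $A/J$ is a domain, so the task is: for each nonzero prime $P$ of $A/J$, produce a prime element of $A/J$ lying in $P$. It suffices to handle the case where $P$ is a height-$1$ prime, since any nonzero prime contains a height-$1$ prime; and since $Z(A/J)$ and more generally the normal elements control things, it further suffices to treat height-$1$ $H$-primes of $A/J$ --- indeed, if $P$ is a height-$1$ prime that is not $H$-stable, then the intersection of its $H$-orbit is a nonzero $H$-prime $L \subseteq P$, and as $\hgt P = 1$ we get $L$ is a (possibly smaller-height, hence) height-$1$ $H$-prime contained in $P$; a normal generator for $L$ then lies in $P$. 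So the crux is: \emph{every height-$1$ $H$-prime of $A/J$ is generated by a single normal element, with domain factor.}

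Next I would use the explicit combinatorial data of Figure \ref{Hprimegens} together with the height formula established in Subsection \ref{Hprimesindexed} (the height of $Q_w$ equals the number of listed generators). Concretely, fix $J = Q_{w_+,w_-}$ and let $K = Q_{v_+,v_-} \supsetneq J$ be an $H$-prime with $\hgt(K) = \hgt(J)+1$, i.e.\ $K/J$ is a height-$1$ $H$-prime of $A/J$. From the figure one reads off that the generator list for $K$ is obtained from that for $J$ by adjoining exactly one more quantum minor $p$ (a $1\times 1$ or $2\times 2$ minor, a bullet or square in the diagram). The key claims to verify, case by case over the covering relations in the $36$-element poset, are: (i) the image $\overline{p}$ of this extra minor in $A/J$ is a normal element --- this should follow from the standard commutation relations between quantum minors (the quasi-commutation/Laplace-type identities), together with the fact that the remaining generators of $K$ sit in $J$; (ii) $\overline{p}$ is regular in $A/J$ (automatic, $A/J$ a domain, $\overline p \neq 0$ since $p \notin J$); and (iii) $(A/J)/\langle \overline p\rangle = A/K$ is a domain --- which holds because $K$ is completely prime. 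Then $K/J = \langle \overline p\rangle$: the inclusion $\supseteq$ is (i)+(iii) via the noncommutative Principal Ideal Theorem comparing heights, or directly since $\langle \overline p \rangle$ is a nonzero prime (by (iii)) contained in the height-$1$ prime $K/J$, forcing equality. Finally, to get prime elements for \emph{all} nonzero primes $P$ of $A/J$: pick a minimal prime $K/J$ of $P$ over, wait --- rather, pick an $H$-prime $K \supseteq J$ with $K/J$ of height $1$ and $K \subseteq$ (the intersection of the $H$-orbit of) $P$; the argument above then supplies a prime element of $A/J$ inside $K/J \subseteq P$.

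The main obstacle I anticipate is step (i) --- establishing normality of the adjoined minor $\overline p$ in $A/J$ uniformly, or at least manageably, across the covering relations of the poset. The commutation relations among quantum minors in $\OqSLth$ are not as clean as in $\Oq(M_2)$ or $\Oq(GL_2)$; a given $2\times 2$ minor does not $q$-commute with all generators of $A$, only modulo the ideal $J$ does one expect the normalizing automorphism to materialize. I would handle this by invoking the results of \cite{primOqSL3} on the structure of the $H$-prime factors (in particular any normality or quasi-commutation statements proved there for the relevant minors), and by exploiting the symmetries $\tau$, $S$, $\rho$ described in Subsection \ref{Hprimesindexed} to cut the number of covering relations one must check by hand by roughly a factor of four or more. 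A secondary, smaller point to be careful about is the passage from normal generation of height-$1$ $H$-primes of $A/J$ to a prime \emph{element} in an arbitrary nonzero prime $P$ of $A/J$: one must confirm that every nonzero prime of $A/J$ contains a nonzero $H$-prime of $A/J$ (true since $H$ has only finitely many prime-orbit intersections and these are $H$-primes, and the relevant intersection is nonzero as $P \neq 0$), and that within the $H$-prime poset of $A/J$ one can descend to height $1$; both are formal given the finite stratification picture of Observation \ref{phiJK}.
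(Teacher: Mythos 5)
Your computational core (verify, covering pair by covering pair of $H$-primes, that $Q_v/Q_w$ is generated by a normal $H$-eigenvector, using Figure \ref{Hprimegens} and the symmetries $\tau$, $S$, $\rho$ to cut down the casework) is exactly the content of the paper's proof. The genuine gap is in your reduction from that statement to ``$A/J$ is a noncommutative UFD.'' You argue that any nonzero prime $P$ of $A/J$ contains a nonzero $H$-prime because the $H$-core $\bigcap_{h\in H} h.P$ ``is nonzero as $P\neq 0$.'' That is false: the $H$-core of a nonzero prime is zero precisely when $P$ lies in the zero stratum $\spec_0(A/J)$, and such primes exist in abundance here (for instance the maximal ideals of the $J=0$ stratum of $\OqSLth$, or of the strata isomorphic to $\spec_0\Oq(GL_2(k))$, which have height $2$ over the relevant $H$-prime and trivial $H$-core). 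For these primes your argument produces no prime element at all. This is exactly why the paper does not make a formal reduction but invokes \cite[Proposition 1.6, Theorem 3.6]{LLR} (cf.\ \cite[Theorem 2.3]{GYcgl}): those results say that, under the CGL-type hypotheses satisfied here, the ``$H$-UFD'' property (every nonzero $H$-prime contains a prime $H$-eigenvector) already implies the full UFD property, and their proof is a nontrivial analysis of the primes with trivial $H$-core via the stratification; it is not the one-line orbit-intersection argument you give. Without citing or reproving that reduction, your proposal only establishes the $H$-UFD property. (Your preliminary reduction ``every nonzero prime contains a height-$1$ prime'' also needs justification in the noncommutative setting, though it is subsumed by the LLR/GY machinery.)

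A secondary inaccuracy: it is not true that for every covering pair the generator list for $Q_v$ in Figure \ref{Hprimegens} is that of $Q_w$ with one extra minor adjoined; this fails exactly in the cases $w_+=312$ or $w_-=231$, which is where the paper has to work. For example, for $w=(231,231)$ and $v=(231,132)$ the ideal $Q_v/Q_w$ is generated by $[13|23]+Q_w$, an element appearing in neither displayed generating set, and the paper must prove both that $[13|23]$ is normal modulo $Q_w$ (by transporting normality of $X_{12}$ through the antipode) and that $Q_w+\langle [13|23]\rangle$ really equals $Q_v$ (by applying $S$ and comparing heights). So the hard cases require more than checking normality of an adjoined listed generator; your plan correctly anticipates using the symmetries, but the claim ``one reads off that the generator list for $K$ is obtained from that for $J$ by adjoining exactly one more quantum minor'' would have to be abandoned there and replaced by arguments of this kind.
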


\begin{proof} By arguments of Launois, Lenagan, and Rigal in \cite[Proposition 1.6, Theorem 3.6]{LLR}
(cf.~\cite[Theorem 2.3]{GYcgl}), it suffices to show that each nonzero $H$-prime of $A/J$ contains a prime
$H$-eigenvector, i.e., for all $H$-primes $Q_v \supset Q_w$ in $A$ with $\hgt(Q_v/Q_w) = 1$, the ideal $Q_v/Q_w$ is
generated by a normal $H$-eigenvector. In $25$ cases, namely when $w_- \ne 231$ and $w_+ \ne 312$, this is clear by
inspection from Figure \ref{Hprimegens}. Since 
\begin{align*} 
S(Q_{321,231}) &= Q_{321,312}  &S(Q_{312,321}) &=
Q_{231,321}  &S(Q_{312,231}) &= Q_{231,312} 
\end{align*} 
the cases $w= (321,231), (312,321), (312,231)$ follow
immediately from the earlier cases. Next, observe that $S(Q_{132,231})$ must be an $H$-prime of height $3$. Since
$$S(Q_{132,231}) = \langle X_{13}, [23|13],[23|12] \rangle \subseteq Q_{132,312}$$ 
and $\hgt(Q_{132,312}) = 3$, we
find that $S(Q_{132,231}) = Q_{132,312}$. Hence, the case $w= (132,231)$ follows from the earlier cases. The cases
$$w= (213,231),\, (123,231),\, (312,132),\, (312,213),\, (312,123)$$
 are handled similarly.

Only the cases $w= (231,231),(312,312)$ remain. Since $\tau$ interchanges $Q_{231,231}$ and $Q_{312,312}$, it suffices
to deal with one of these cases. We concentrate on $w= (231,231)$.

There are four indices $v$ such that $Q_v$ is an $H$-prime of height $3$ containing $Q_w$. In two of these cases,
namely when $v= (132,231)$ or $v= (213,231)$, it is clear that $Q_v/Q_w$ is generated by a normal $H$-eigenvector. The
remaining two cases are when $v= (231,132)$ or $(231,213)$. Since $\rho(Q_w) = Q_w$ and $\rho(Q_{231,132}) =
Q_{231,213}$, we need only consider the case $v= (231,132)$.

Note that $X_{12}$ is normal modulo $Q_{321,312}$. Applying $S$, we find that $[13|23]$ is normal modulo
$Q_{321,231}$, and hence normal modulo $Q_w$. Next, observe that $S$ sends the ideal $K := Q_w + \langle [13|23]
\rangle$ to $Q_{312,132}$, which is an $H$-prime of height $3$, so $K$ must be an $H$-prime of height $3$. However, $K
\subseteq Q_v$ and $Q_v$ is an $H$-prime of height $3$, so we conclude that $K = Q_v$. This implies that $Q_v/Q_w$ is
generated by the normal $H$-eigenvector $[13|23] + Q_w$, completing the proof. 
\end{proof}

Recall that a \emph{polynormal regular sequence} in a ring $R$ is a sequence of elements $u_1,\dots,u_n$ such that
each $u_i$ is regular and normal modulo $\langle u_1,\dots,u_{i-1} \rangle$. If the $u_i$ are all normal in $R$, we
refer to $u_1,\dots,u_n$ as a \emph{regular normal sequence}.

\begin{theorem}  \label{P/Jnormgen} 
For any $J \in \Hspec A$ and $P \in \spec_J A$, the ideal $P/J$ is generated by
normal elements. In fact, $P/J$ is generated by a regular normal sequence, and thus $P$  is generated by a polynormal
regular sequence. 
\end{theorem}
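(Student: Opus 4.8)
The plan is to reduce, via the quantum determinant being central, and via the localization techniques already established, to controlling the prime ideals of the $H$-prime factor algebras $A/J$ and to exhibit explicit normal generating sequences. Since Theorem \ref{HUFDfactors} tells us each $A/J$ is a noncommutative UFD, every height-$1$ prime of $A/J$ is generated by a single normal element (a prime element). The strategy is then to induct on the height of $P/J$ inside $A/J$: if $P/J$ has height $h\ge 1$, pick an $H$-prime $L$ with $J\subseteq L$ minimal among the $H$-primes properly containing $J$ and contained in $P$ (if any such $L\ne P$ exists), peel off the corresponding normal generators of $L/J$ supplied by Figure \ref{Hprimegens}, and pass to $A/L$, where $P/L$ lies in $\spec_L A$ with strictly smaller height; then splice the sequences together. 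The delicate point is the base of the induction, namely the case where $P$ is not itself an $H$-prime but lies in the stratum $\spec_J A$ with no $H$-prime strictly between $J$ and $P$.

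For that base case I would argue as follows. By Theorem \ref{strat.thm}(b)–(c), $\spec_J A \approx \spec Z(A_J)$ with $Z(A_J)$ a Laurent polynomial ring over $k$, so $P A_J \cap Z(A_J)$ is a prime of a Laurent polynomial ring. Each prime of $Z(A_J)$ is itself an intersection of maximal ideals (Jacobson), and each maximal ideal of $Z(A_J)$ pulls back to a maximal element of $\spec_J A$, i.e.\ to an element of $\prim_J A$; moreover in a Laurent polynomial ring in $r$ variables any prime can be generated (after a monomial change of coordinates) by a subset of the coordinate functions shifted by scalars, hence every prime $\mathfrak p$ of $Z(A_J)$ of height $t$ is generated by $t$ elements forming a regular sequence of "coordinate-type" elements $z_1-\mu_1,\dots,z_t-\mu_t$. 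Pulling these back to $A/J$ via the isomorphism $\spec_J A \approx \spec Z(A_J)$, and using that elements of $Z(A_J)\cap (A/J)$ are central, one obtains central (hence normal) elements of $A/J$; the nontrivial check is that the ideal they generate in $A/J$ is exactly $P/J$ rather than something smaller, which one gets from the fact that $P/J = (PA_J\cap Z(A_J))\cdot A_J \cap (A/J)$ together with regularity of the $z_i-\mu_i$ modulo the preceding ones (regularity in $A_J$, and then regularity in $A/J$ by Theorem \ref{strat.thm}(a), since the relevant localization is at regular homogeneous elements). Concretely, for $A=\OqSLth$ one should identify the generators $z_i$ with explicit products/ratios of quantum minors, as in the $\Oq(GL_2)$ computation of Example \ref{oqgl2}; the point is only that they are central in $A/J$ and form a regular sequence there.

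Carrying out the induction: given the sequence of normal generators $g_1,\dots,g_m$ for $L/J$ read off from Figure \ref{Hprimegens} — which, by the polynormality remarked in Subsection \ref{Hprimesindexed}, form a polynormal regular sequence in $A/J$ and, since the minors in the figure are $H$-eigenvectors, in fact a regular normal sequence in $A/J$ once one checks each $g_i$ is actually normal (not merely normal modulo the earlier ones) — and the regular normal sequence $h_1,\dots,h_t$ for $P/L$ obtained inductively in $A/L$, lift the $h_j$ arbitrarily to $A/J$; then $g_1,\dots,g_m,h_1,\dots,h_t$ generates $P/J$, each $h_j$ is regular and normal modulo $\langle g_1,\dots,g_m,h_1,\dots,h_{j-1}\rangle$ because that ideal is $\supseteq L/J$ and modulo it we are in $A/L$ where the $h_j$ form a regular normal sequence, and the $g_i$ are already normal in $A/J$. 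This yields a polynormal regular sequence generating $P/J$; but to get a \emph{regular normal} sequence (the stronger claim), one uses the UFD structure differently — namely, rather than peeling off the full ideal $L/J$ at once, peel off one prime element at a time: by Theorem \ref{HUFDfactors} applied iteratively to the factors $A/L_0, A/L_1,\dots$ along a saturated chain $J=L_0\subsetneq L_1\subsetneq\cdots$ of $H$-primes below $P$, each step contributes a single normal $H$-eigenvector, and these remain normal in $A/J$ (an $H$-eigenvector normal modulo an $H$-prime ideal that it generates, together over an $H$-prime chain, can be checked to be normal in the original factor). Finally, appending these sequences to a polynormal regular sequence generating $J$ in $A$ (which exists by the polynormality of the generators in Figure \ref{Hprimegens}, or more simply because $J$ is an $H$-prime of a quantum affine space–type algebra) gives the polynormal regular sequence generating $P$.

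The main obstacle is the base case — verifying that the prime $P/J$ in the "open" part of a single stratum is normally generated, with the generators forming a \emph{regular normal} sequence rather than merely a polynormal one. The subtlety is that a height-$t$ prime of $Z(A_J)$ need not be generated by $t$ of the original coordinates; one must first perform a monomial substitution in the Laurent polynomial ring $Z(A_J)$ to put the prime in coordinate form, and then argue that the substituted "coordinates," when pulled back, are still central in $A/J$ (they are, being in $Z(A_J)$) and still regular modulo the preceding ones in $A/J$ (not just in $A_J$). For the second point I would invoke that the denominator set $\calE_J$ consists of regular homogeneous elements, so regularity modulo a prime is preserved on contracting from $A_J$ to $A/J$; this needs a small argument that the relevant quotients $(A/J)/\langle z_1-\mu_1,\dots\rangle$ embed into their localizations, which follows because the images of the $z_i-\mu_i$ generate, in $A_J$, a prime ideal and $A_J$ is (by $H$-simplicity and Theorem \ref{strat.thm}) well-behaved enough for regularity to descend.
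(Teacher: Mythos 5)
There is a genuine gap, and it sits exactly where you locate the ``main obstacle.'' First, note that your induction over intermediate $H$-primes is vacuous: $P \in \spec_J A$ means by definition that $\bigcap_{h\in H} h.P = J$, so there is no $H$-prime $L$ with $J \subsetneq L \subseteq P$; the whole theorem \emph{is} your base case. For that base case you assert that every prime of the Laurent polynomial ring $Z(A_J)$ becomes, after a monomial change of coordinates, an ideal generated by coordinates shifted by scalars. This is false: such primes are exactly the ideals of cosets of subtori, whereas a general prime is the ideal of an arbitrary irreducible subvariety of the torus (e.g.\ $\langle y^2 - x^3 - 1\rangle$ in $k[x^{\pm1},y^{\pm1}]$). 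Only \emph{maximal} ideals of $Z(A_J)$ have the coordinate form. Moreover, even when the contracted prime of $Z(A_J)$ is nice, its generators typically do not lie in $A/J$ and their ``cleared'' versions are not central: in the $\Oq(GL_2(k))$ example the central generator is $bc^{-1}-\lambda$, whose numerator $b-\lambda c$ is normal but not central, and proving that such numerators generate exactly $P/J$ (rather than something smaller) is precisely the nontrivial content, not a routine localization check.

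More fundamentally, no argument as soft as the one you propose can work, because it uses nothing special about $\OqSLth$ beyond the UFD property, and the statement fails in that generality: by Theorem \ref{infdim}(b) together with the mechanism of Theorem \ref{homolconseq}, quantized coordinate rings of larger groups have prime factors of infinite injective dimension, hence have primes \emph{not} generated by polynormal regular sequences. The paper's proof instead exploits the fact, extracted from the explicit generator lists of \cite{primOqSL3} (Figures 6 and 7) plus a few ad hoc identities, that every $H$-stratum of $\OqSLth$ has dimension at most $2$. Then height $1$ primes $P/J$ are handled by the UFD theorem (Theorem \ref{HUFDfactors}), while $\hgt(P/J)\ge 2$ forces $P$ to be maximal in its stratum, i.e.\ primitive, and for primitive ideals explicit regular normal generating sequences are produced case by case: choosing $D_q-1$ in the first coordinate for the strata $(321,321)$, $(231,231)$, $(312,312)$, using commutativity for $(123,123)$, and identifying $A/Q_w$ with $\Oq(GL_2(k))$ for $(132,132)$, $(213,213)$, where $(\Delta-\mu,\,b-\lambda c)$ is checked to be a regular normal sequence via $B/\langle\Delta-\mu\rangle\cong\Oq(SL_2(k))$. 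Your proposal contains neither the dimension bound nor any substitute for these explicit verifications, so the central claim remains unproved.
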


\begin{proof} The argument of \cite[\S2.4(4)]{primOqSL3} shows that $J$ has a polynormal regular sequence of
generators, and so we only need to show that $P/J$ has a regular normal sequence of generators.

There is nothing to prove in case $P/J = 0$. If $P/J$ has height $1$, then $P/J$ is generated by a normal element $u$
because $A/J$ is a noncommutative UFD (Theorem \ref{HUFDfactors}), and $u$ is regular because $A/J$ is a domain.
Assume now that $\hgt(P/J) \ge 2$.

Write $J = Q_w$, and let $Q^+_w$ denote the corresponding $H$-prime in $\OqGLth$. According to \cite[Corollary 5.4,
Theorem 5.5]{primOqSL3}, the elements listed in position $w$ of \cite[Figure 6]{primOqSL3} give regular normal
sequences in $\OqGLth/Q^+_w$ and the ideals they generate cover all quotients $P^+/Q^+_w$ where $P^+ \in \prim_w
\OqGLth$. Consequently, the elements listed in position $w$ of \cite[Figure 7]{primOqSL3} are normal in $A/Q_w$ and
the ideals they generate cover all quotients $P'/Q_w$ where $P' \in \prim_w A$. Note that in all but three cases, the
number of elements listed is at most two. In these three cases, the quotients $P'/Q_w$ can be generated by two of the
three elements listed, since 
\begin{gather*} \begin{aligned} 
{}[23|23] - \alpha^{-1} &= -\alpha^{-1} [23|23] (X_{11} -
\alpha)  &&\quad (w = (132,132))  \\ [12|12] - \alpha^{-1} &= -\alpha^{-1} [12|12] (X_{33} - \alpha)  &&\quad (w =
(213,213)) \end{aligned}  \\ \begin{aligned} X_{33} - \alpha^{-1} \beta^{-1} &= -\alpha^{-1} X_{33} (X_{11}-\alpha)
-\alpha^{-1} \beta^{-1} X_{11}X_{33} (X_{22}-\beta)  \\
 &\qquad (w= (123,123)).
 \end{aligned}
\end{gather*} 
Thus, in all cases, $P'/Q_w$ can be generated by two or fewer normal elements, and we conclude that
$\hgt(P'/Q_w) \le 2$.

Observe next that in the four cases 
$$w= (132,123), \, (213,123), \, (123,132), \, (123,213),$$ 
the quotients $P'/Q_w$
where $P' \in \prim_w A$ can be generated by single normal elements, so they have height $1$.

Since the primitive ideals in $\spec_w A$ coincide with the maximal elements of that stratum, our assumption
$\hgt(P/Q_w) \ge 2$ implies that $P \in \prim_w A$. There are only six cases where this can occur: 
$$w = (321,321), \,
(231,231), \, (312,312), \, (132,132), \, (213,213), (123,123).$$ 
In the first three of these cases, the first element
of the regular normal sequence in position $w$ of \cite[Figure 6]{primOqSL3} is $D_q - \alpha$, where $D_q$ is the
quantum determinant and $\alpha \in \kx$. Choosing $\alpha = 1$, we find that the remaining elements listed -- i.e.,
those in position $w$ of \cite[Figure 7]{primOqSL3} -- give regular normal sequences in $A/Q_w$ and the ideals they
generate cover all quotients $P'/Q_w$ where $P' \in \prim_w A$. Thus, $P/Q_w$ is generated by a regular normal
sequence in these cases. This likewise holds in the case $w = (123,123)$, since in that case, $A/Q_w$ is a commutative
Laurent polynomial ring.

The cases $w= (132,132),(213,213)$ remain. In both of these cases, $A/Q_w$ is isomorphic to the algebra $B :=
\Oq(GL_2(k))$, via an isomorphism that carries $P/Q_w$ to a maximal element of $\spec_0 B$. As noted in Example
\ref{oqgl2}, the maximal elements of $\spec_0 B$ have the form $\langle b - \lambda c, \, \Delta - \mu \rangle$ for
$\lambda, \mu \in \kx$. The quotients $B/ \langle \Delta - \mu \rangle$ are isomorphic to $\Oq(SL_2(k))$, so they are
domains. Consequently, $(\Delta - \mu, \, b - \lambda c)$ is a regular normal sequence in $B$. Therefore $P/Q_w$ is
generated by a regular normal sequence in the final two cases. 
\end{proof}

We now see that Conjecture \ref{fJKgJKconj} holds in the present situation:

\begin{theorem}  \label{conjverifOqSL3} 
Let $A = \OqSLth$, with $q \in \kx$ not a root of unity and $k=\kbar$, and let
the torus $H$ of \eqref{OqSL3torus} act rationally on $A$ as in \eqref{HSLaction}. Then both cases of Conjecture
{\rm\ref{fJKgJKconj}} hold.
 \end{theorem}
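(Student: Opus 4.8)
The plan is to obtain Theorem \ref{conjverifOqSL3} as a direct consequence of the three preceding results, with essentially no new work. First, since Assumptions \ref{AHetc} are in force for $A = \OqSLth$ with the torus $H$ of \eqref{OqSL3torus} (noetherianity and the finiteness of $\Hspec A$ are standard, and the noncommutative Nullstellensatz holds by \cite[Corollary II.7.18]{BG}), I may apply Proposition \ref{conjreduc}. Part (c) of that proposition tells us that, for each pair $J \subset K$ in $\Hspec A$, Conjecture \ref{fJKgJKconj} holds for $\phi^s_{JK}$ if and only if it holds for $\phi^p_{JK}$; hence it suffices to treat the case of $\phi^s_{JK}$ for every such pair. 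By Proposition \ref{conjreduc}(a), verifying Conjecture \ref{fJKgJKconj} for $\phi^s_{JK}$ is equivalent to checking the implication \eqref{inclwish}, that is, $(P/J) \cap Z_{JK} \cdot \calE_{JK} \subseteq Q/J$ forces $P \subseteq Q$, for all $P \in \spec_J A$ and all $Q \in \spec_K A$.

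Next I would feed in the normal-generation input. Proposition \ref{normgen} says that, for a fixed $P \in \spec_J A$, the implication \eqref{inclwish} holds for \emph{every} $Q \in \spec_K A$ as soon as $P/J$ is generated by a set of normal elements of $A/J$. Theorem \ref{P/Jnormgen} supplies precisely this hypothesis for the algebra at hand: for any $J \in \Hspec A$ and any $P \in \spec_J A$, the ideal $P/J$ is generated by normal elements (indeed by a regular normal sequence). Running this over all $36$ choices of $J$ and all primes $P$ in each stratum $\spec_J A$ then yields \eqref{inclwish} for every relevant pair $(P,Q)$, so Conjecture \ref{fJKgJKconj} holds for every $\phi^s_{JK}$, and by Proposition \ref{conjreduc}(c) for every $\phi^p_{JK}$ as well. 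This is exactly the assertion that both cases of the conjecture hold.

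The point worth stressing is that there is no genuine obstacle in this final deduction: it is a purely formal chaining of Propositions \ref{conjreduc} and \ref{normgen} with Theorem \ref{P/Jnormgen}, requiring no case analysis. All of the actual difficulty has been discharged upstream, in establishing Theorem \ref{P/Jnormgen} — which in turn relies on Theorem \ref{HUFDfactors} (the noncommutative UFD property of the $H$-prime factors) together with the explicit descriptions of the strata $\spec_J A$ and their maximal elements imported from \cite{primOqSL3}. So in writing the proof I would keep it to a few lines, simply citing these results in the order above.
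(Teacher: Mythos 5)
Your proposal is correct and matches the paper exactly: the paper's proof is the one-line citation of Theorem \ref{P/Jnormgen} together with Propositions \ref{normgen} and \ref{conjreduc}, which is precisely the chain of reductions you describe. The substance indeed lies upstream in Theorem \ref{P/Jnormgen}, as you note.
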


\begin{proof} Theorem \ref{P/Jnormgen} and Propositions \ref{normgen}, \ref{conjreduc}. 
\end{proof}


\section{Homological applications}
\label{Homological}

We establish the announced homological conditions for prime factor algebras of $\OqSLth$ here, and then show that these conditions do not hold for all prime factors of quantized coordinate rings of larger algebraic groups.  We begin with the following consequence of Theorem \ref{P/Jnormgen}. It was obtained for primitive
 factor algebras in \cite[Theorem 6.1]{primOqSL3}.

\begin{theorem}  \label{homolconseq} 
Let $A = \OqSLth$, with $q \in \kx$ not a root of unity and $k=\kbar$. Then all
prime factor algebras of $A$ are Auslander-Gorenstein and GK-Cohen-Macaulay. 
\end{theorem}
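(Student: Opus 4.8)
The plan is to derive this from Theorem~\ref{P/Jnormgen} together with the standard fact that both the Auslander-Gorenstein and the GK-Cohen-Macaulay property pass to a factor of a ring by a regular normal element, with the starting point being that $A$ itself has these properties. Concretely, for the base of the induction one notes that $\OqMth$ is an iterated Ore extension of $k$ in nine indeterminates, hence a noetherian domain of finite global dimension which is Auslander-regular and GK-Cohen-Macaulay with $\GKdim = 9$; and $A = \OqMth/\langle \detq - 1\rangle$ is its quotient by the central regular element $\detq - 1$, so it remains Auslander-regular and GK-Cohen-Macaulay, with global dimension and GK-dimension each dropping by one. (Alternatively, one may simply quote the references assembled for the base case of \cite[Theorem~6.1]{primOqSL3}.) In particular $A$ is a noetherian domain with $\GKdim A < \infty$.

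Next I would invoke the propagation step: if $R$ is a noetherian $k$-algebra which is Auslander-Gorenstein and GK-Cohen-Macaulay with $\GKdim R < \infty$, and $x \in R$ is a regular normal non-unit, then $R/xR$ is again Auslander-Gorenstein and GK-Cohen-Macaulay, with both its injective dimension and its GK-dimension one smaller than those of $R$. This is well known; the injective-dimension and Auslander part is classical (see \cite{McRob} and the references in \cite[Theorem~6.1]{primOqSL3}), while the GK-dimension drop relies on exactness of GK-dimension for finitely generated modules over the algebras at hand. Since every ring appearing below is a domain, ``regular'' just means ``nonzero'' throughout.

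The main argument is then a short induction. Given a prime $P$ of $A$, set $J := \bigcap_{h\in H} h.P \in \Hspec A$, so $P \in \spec_J A$. By Theorem~\ref{P/Jnormgen}, $P$ is generated by a polynormal regular sequence $u_1,\dots,u_n$ in $A$: each $u_i$ is regular and normal modulo $\langle u_1,\dots,u_{i-1}\rangle$. Put $R_0 := A$ and $R_i := A/\langle u_1,\dots,u_i\rangle$ for $1\le i\le n$, so that $R_n = A/P$. The base case gives that $R_0$ is Auslander-Gorenstein and GK-Cohen-Macaulay; and if $R_{i-1}$ has these properties, then the image of $u_i$ in $R_{i-1}$ is a regular normal non-unit, so the propagation step applies and $R_i = R_{i-1}/u_iR_{i-1}$ is Auslander-Gorenstein and GK-Cohen-Macaulay. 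Taking $i=n$ proves the theorem. This works for every prime, primitive or not: the only input beyond \cite[Theorem~6.1]{primOqSL3} is Theorem~\ref{P/Jnormgen}, which supplies polynormal regular generating sequences for arbitrary primes rather than just primitive ones.

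I expect the only delicate point to be the GK-dimension bookkeeping inside the propagation step. One must know that passing from $R_{i-1}$ to $R_{i-1}/u_iR_{i-1}$ lowers $\GKdim$ by exactly $1$, so that it stays in step with the grade, which rises by exactly $1$ by the Auslander-Gorenstein property; this keeps the Cohen-Macaulay identity (GK-dimension plus grade equal to $\GKdim R$) intact, and the GK-Cohen-Macaulayness of $R_i$ over itself then follows by a change-of-rings argument. The exactness of GK-dimension needed here holds for the iterated Ore extensions and factor rings in play, but it is the one ingredient that should be verified carefully rather than taken for granted.
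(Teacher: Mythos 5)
Your argument is correct and is essentially the paper's own proof: the paper likewise combines Theorem~\ref{P/Jnormgen} (polynormal regular generating sequences for arbitrary primes) with the Auslander-regularity and GK-Cohen-Macaulayness of $A$ itself (citing \cite[Proposition I.9.12]{BG}) and the standard propagation results for factoring by regular normal elements, which it quotes in collected form from \cite[Theorem 7.2]{primOqSL3} rather than running the induction explicitly as you do. The GK-dimension bookkeeping you flag is exactly what those cited standard results take care of, so there is no gap.
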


\begin{proof} By Theorem \ref{P/Jnormgen}, any prime ideal $P$ of $A$ has a polynormal regular sequence of generators.
Moreover, $A$ is Auslander-regular and GK-Cohen-Macaulay (e.g., \cite[Proposition I.9.12]{BG}). It thus follows from
standard results, collected in \cite[Theorem 7.2]{primOqSL3}, that $A/P$ must be Auslander-Gorenstein and
GK-Cohen-Macaulay. 
\end{proof}

We now show that Theorem \ref{homolconseq} does not extend to $\mathcal{O}_q(G)$ for an arbitrary group $G$, but
rather is a consequence of the special circumstance that all the $H$-strata of $\OqSLth$ have dimension at most 2. We
also prove that Theorem \ref{homolconseq} cannot be improved so as to conclude that the prime factors of
$\OqSLth$ have finite global dimension. For these results we need the following lemma.

\begin{lemma}  \label{freeness} 
Impose Assumptions {\rm\ref{AHetc}}. For any $J \in \Hspec A$, the algebra $A_J$ is a
free module over its center. Moreover, there is a $Z(A_J)$-basis for $A_J$ that contains $1$. 
\end{lemma}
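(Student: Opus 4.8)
The plan is to exploit the $X(H)$-grading on $A_J$ coming from the rational $H$-action. Write $C := Z(A_J)$. Then $C$ is a graded subalgebra of $A_J$, since the homogeneous components of a central element are central (cf.~\cite[Exercise II.3.B]{BG}), and by Theorem~\ref{strat.thm}(c) it is a Laurent polynomial ring $k[z_1^{\pm1},\dots,z_r^{\pm1}]$. Because $C$ is a domain graded by the totally ordered group $X(H)$, every unit of $C$ is homogeneous (compare leading and trailing terms); since the generators $z_i$ are units, they are homogeneous, say of degrees $\gamma_i \in X(H)$. I would then aim to prove that $\gamma_1,\dots,\gamma_r$ are $\ZZ$-linearly independent, and to use this to manufacture a $C$-basis of $A_J$ from $k$-bases of the homogeneous components $(A_J)_\gamma$, transporting between components by multiplication by monomials in the $z_i$ (this mirrors the structure of a quantum torus as a free module over its center, in the spirit of McConnell--Pettit).

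The heart of the argument is the linear independence of the $\gamma_i$, and this is where $H$-simplicity is used. If there were a nonzero relation $\sum_i a_i\gamma_i = 0$ with $a\in\ZZ^r$, then $w := z_1^{a_1}\cdots z_r^{a_r}$ would be a central homogeneous element of degree $0$ with $w\neq 1$ (the $z_i$ being algebraically independent in $C$), so $w-1$ would be a nonzero central homogeneous element of degree $0$. Then $(w-1)A_J$ is a nonzero $H$-stable two-sided ideal of the $H$-simple ring $A_J$, forcing $(w-1)A_J = A_J$, so that $w-1$ is invertible in $A_J$ with inverse necessarily lying in $C$. This contradicts the fact that the units of the Laurent polynomial ring $C$ are the scalar multiples of monomials in the $z_i$, since $w-1$ is not of that form. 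I expect this to be the one genuinely non-routine step; granting it, one gets that the only homogeneous elements of $C$ of degree $0$ are the scalars.

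With that in hand the rest is bookkeeping. Put $\Gamma_0 := \sum_i\ZZ\gamma_i \subseteq X(H)$; by independence this is free on the $\gamma_i$, so each $\delta\in\Gamma_0$ is the degree of a unique monomial $z^{(\delta)}$ in the $z_i$, which is a homogeneous unit of $A_J$. Multiplication by $z^{(\delta)}$ is a $k$-linear bijection $(A_J)_\gamma\to(A_J)_{\gamma+\delta}$, so the support $\Lambda := \{\gamma : (A_J)_\gamma\neq 0\}$ is a union of $\Gamma_0$-cosets, and it contains $0$. Grouping the homogeneous components by $\Gamma_0$-coset writes $A_J = \bigoplus_{t\in T} M_t$ with $M_t := \bigoplus_{\delta\in\Gamma_0}(A_J)_{t+\delta}$ a $C$-submodule, where $T$ is a set of representatives for the $\Gamma_0$-cosets meeting $\Lambda$, chosen with $0\in T$. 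For each $t$ I would fix a $k$-basis $B_t$ of $(A_J)_t$, taking $1\in B_0$, and check that $B_t$ is a $C$-basis of $M_t$: spanning follows by expanding $z^{(-\delta)}y_\delta\in(A_J)_t$ in terms of $B_t$ for $y=\sum_\delta y_\delta \in M_t$; linear independence follows by decomposing any $C$-combination of the $b\in B_t$ into homogeneous components of the distinct degrees $t+\delta$, then multiplying by $z^{(-\delta)}$ to undo the shift and invoking $k$-linear independence of $B_t$. Then $A_J$ is $C$-free with basis $\bigsqcup_{t\in T}B_t$, which contains $1$, as required.
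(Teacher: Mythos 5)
Your proof is correct: the $H$-simplicity argument for the $\ZZ$-independence of the degrees $\gamma_i$ works (if $w=z_1^{a_1}\cdots z_r^{a_r}$ had degree $0$ with $a\neq 0$, then $w-1$ is a nonzero central homogeneous element fixed by $H$, so it generates a nonzero $H$-stable ideal and is therefore invertible, with central inverse, contradicting the description of the units of a Laurent polynomial ring), and the coset-by-coset verification that each $B_t$ is a $Z(A_J)$-basis of $M_t$ goes through exactly as you indicate. The route differs from the paper's in its inputs, though the underlying decomposition is the same. The paper quotes the proof of \cite[Lemma II.3.7]{BG} to get that $Z(A_J)$ is homogeneous, that its support $\Gamma$ is a subgroup of $X(H)$, and that $S:=\bigoplus_{\chi\in\Gamma}(A_J)_\chi$ is free over $Z(A_J)$ with a basis containing $1$; it then uses graded-simplicity of $A_J$ (the identity component is simple, hence $A_J$ is strongly graded) to make $A_J$ free over $S$ on a transversal of $\Gamma$ in $X(H)$. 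You bypass both citations: your degree-zero unit argument re-proves precisely the piece of \cite[Lemma II.3.7]{BG} that is needed, namely that every nonzero homogeneous component of $Z(A_J)$ is one-dimensional and spanned by a unit, with support the subgroup $\sum_i\ZZ\gamma_i$ (which is the paper's $\Gamma$), and you then exhibit freeness in a single step, with basis $\bigsqcup_{t\in T}B_t$ indexed by a transversal of the cosets of that subgroup, rather than factoring through $S$ and strong gradedness. What your version buys is self-containedness: only Theorem \ref{strat.thm}(a),(c) and the homogeneity of central components are used, and no crossed-product machinery; the cost is a larger, less structured basis built from $k$-bases of the graded components, whereas the paper's two-stage argument is shorter and places $A_J$ in the standard strongly graded framework.
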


\begin{proof} Theorem \ref{strat.thm}(a) says that $A_J$ is $H$-simple, and thus also graded-simple with respect to
the $X(H)$-grading. The proof of \cite[Lemma II.3.7]{BG} shows that $Z(A_J)$ is a homogeneous subring of $A_J$, the
set 
$$\Gamma := \{ \chi \in X(H) \mid Z(A_J)_\chi \ne 0 \}$$ 
is a subgroup of $X(H)$, and the homogeneous subring $S
:= \bigoplus_{\chi \in \Gamma} \, (A_J)_\chi$ of $A_J$ is a free $Z(A_J)$-module with a basis containing $1$.

The graded-simplicity of $A_J$ implies that its identity component is simple, from which it follows that $A_J$ is
strongly graded. Choose a transversal $T$ for $\Gamma$ in $X(H)$ such that $1 \in T$, and observe that $A_J$ is a free
left $S$-module with basis $T$. Both conclusions of the lemma now follow. 
\end{proof}

\begin{subsec}  \label{OqGfacts}
Let $A= \Oq(G)$, with $q \in \kx$ not a root of unity and $k= \kbar$, where $G$ is $SL_n(k)$, $GL_n(k)$, or a connected, simply connected, semisimple complex algebraic group. There are standard choices for a $k$-torus $H$ acting rationally on $A$ by $k$-algebra automorphisms, as in \cite[\S\S II.1.15, II.1.16, II.1.18, Exer.~II.2.G]{BG}. The remaining parts of Assumptions \ref{AHetc} hold by \cite[Theorems I.2.10, I.8.18, II.5.14, II.5.17, Corollaries I.2.8,  II.4.12, II.7.18, II.7.20]{BG}.

There are $H$-strata of $\prim A$ with dimension $\rank G$, as follows. In case $G$ is $SL_n(k)$ or $GL_n(k)$, we can just let $J$ be the $H$-prime $\langle X_{ij} \mid i \ne j \rangle$ and observe that $A/J$ is a Laurent polynomial ring over $k$ in $n-1$ (respectively, $n$) variables. In this special case, $A/J = A_J = Z(A_J)$, and the stratum $\prim_J A$ has dimension $n-1$ (respectively, $n$), in view of Theorem \ref{strat.thm}. There are other strata with the same dimension, obtained for the $SL_n$ case as in the following paragraph, and then for the $GL_n$ case using the isomorphism $\Oq(GL_n(k)) \cong \OqSLn[z^{\pm1}]$ (e.g., \cite[Lemma II.5.15]{BG}).

In the remaining cases, choose $J = K_{w_+,w_-}$ in the notation of \cite[Proposition II.4.11]{BG}, with $w_+ = w_-$. Then \cite[Corollary II.4.15]{BG} shows that $Z(A_J)$ is a Laurent polynomial ring in $\rank G$ variables, so that, again, $\prim_J A$ has dimension $\rank G$ by Theorem \ref{strat.thm}. (In the case $w_+ = w_- = \ident$, we have $A/J = A_J = Z(A_J)$ as above.)
\end{subsec}

\begin{theorem} \label{infdim}
Let $A= \mathcal{O}_q(G)$, with $q \in \kx$ not a root of unity and $k = \kbar$, where $G$ is either a nontrivial connected, simply connected, semisimple complex algebraic group or $GL_n(k)$ for some $n \ge 2$. 

{\rm(a)} If $G$ is not $SL_2(k)$, then $A$ has a prime factor of infinite global dimension.

{\rm(b)} If $G$ is not $SL_2(k)$, $GL_2(k)$ or $SL_3(k)$, then $A$ has a prime factor of infinite injective dimension.  
\end{theorem}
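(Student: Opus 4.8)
The plan is to locate a homological defect of a suitable prime factor $A/P$ of $A$ inside a \emph{commutative} ring --- the center of an $H$-stratum localization --- and then to transfer it upward using Lemma~\ref{freeness}. Fix an $H$-prime $J$ as in~\ref{OqGfacts}, so that $Z := Z(A_J)$ is a Laurent polynomial ring $k[t_1^{\pm1},\dots,t_\ell^{\pm1}]$ with $\ell = \rank G$, and identify $\spec_J A$ with $\spec Z$ via Theorem~\ref{strat.thm}. For a prime $\mathfrak p$ of $Z$, let $P \in \spec_J A$ be the corresponding point; then $PA_J = \mathfrak p A_J$, so $A_J/PA_J = A_J \otimes_Z (Z/\mathfrak p)$, which by Lemma~\ref{freeness} is free as a module over its central subalgebra $Z/\mathfrak p$ with a basis containing $1$, and moreover $A_J/PA_J$ is a localization of $A/P$.

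The transfer step is formal. If $S$ is any ring and $R \subseteq Z(S)$ is a central subalgebra over which $S$ is free with $1$ in a basis, then $S = R \oplus C$ as $R$-bimodules, and, $S$ being $R$-flat, the functor $-\otimes_R S$ is exact and preserves projectives, while $\calExt^n_R(M,N) \cong \calExt^n_S(M\otimes_R S, N)$ for $R$-modules $M$ and $S$-modules $N$ by adjunction. Restricting a shortest projective $S$-resolution of $M\otimes_R S$ to $R$ bounds the projective dimension of $M\otimes_R S$ over $R$, and since $M$ is an $R$-module summand of $M\otimes_R S$, this yields $\gldim R \le \gldim S$; applying the same Ext isomorphism with $N=S$ shows that the self-injective dimension of $R$ is at most that of $S$. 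As neither invariant increases on passing to a localization, it follows that if $Z/\mathfrak p$ has infinite global (resp.\ self-injective) dimension, then so does the prime factor $A/P$.

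It remains to choose $\mathfrak p$. For part~(a): because $A \not\cong \Oq(SL_2(k))$ we have $\ell = \rank G \ge 2$, and a torus of dimension $\ge 2$ contains an irreducible plane curve with a node at a point of the torus --- for instance the image in $\spec Z$ of the translated nodal cubic $(t_2-1)^2 = (t_1-1)^3 + (t_1-1)^2$ (with $t_3 = \dots = t_\ell = 1$), whose coordinate ring is a non-regular domain and hence has infinite global dimension by Serre's criterion. (This is consistent with Theorem~\ref{homolconseq}: the resulting $A/P$ is a hypersurface ring, so Gorenstein, yet not of finite global dimension.) For part~(b) we instead need $Z/\mathfrak p$ to be non-Gorenstein, and when $\ell \ge 3$ this is available: one can realize inside $\spec Z$ a rational curve with a single monomial singularity of type $(3,4,5)$ at a torus point --- e.g.\ the Zariski closure of $s \mapsto (1+s^3,\, 1+s^4,\, 1+s^5)$ intersected with $(\kx)^3$ (and $t_4 = \dots = 1$) --- whose completed local ring at that point is $k[[s^3,s^4,s^5]]$, Cohen--Macaulay of type $2$ and hence not Gorenstein, so of infinite self-injective dimension. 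This settles part~(b) for $GL_n(k)$ with $n \ge 3$, $SL_n(k)$ with $n \ge 4$, and every semisimple $G$ of rank $\ge 3$.

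The main obstacle is the remaining cases of part~(b): the semisimple groups $G \not\cong SL_3(k)$ with $\rank G = 2$, such as $Sp_4$, $G_2$, and $SL_2(k)\times SL_2(k)$. Here the argument above is powerless, since every prime quotient of $k[t_1^{\pm1},t_2^{\pm1}]$ --- being regular, a hypersurface, or a point --- is Gorenstein; this is also exactly why $GL_2(k)$ must be excluded from~(b). For these groups the defect must come from the noncommutative structure. I would look for an $H$-prime inclusion $K \supset J$ for which, in contrast to the $\OqSLth$ situation of Theorem~\ref{P/Jnormgen}, the ideal $K/J$ of $A/J$ is \emph{not} generated by a regular normal sequence, and then verify directly --- via an explicit presentation of $A/J$ as a quantized coordinate ring, or via a depth/Ext computation for $A_J/PA_J$ over its own center when the latter strictly contains $Z/\mathfrak p$ --- that some prime factor trapped between $J$ and $K$, or a localization of it, fails to be Gorenstein. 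Carrying out this group-by-group analysis for $Sp_4$, $G_2$, and the reducible rank-$2$ case is where the real work lies.
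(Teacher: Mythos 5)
For part (a), and for part (b) whenever $\rank G \ge 3$, your proposal is essentially the paper's own proof. The paper selects the same kind of stratum from \S\ref{OqGfacts}, takes $\mathfrak{p}$ to be the cuspidal curve $\langle (x-1)^2-(y-1)^3\rangle$ for (a) and the numerical-semigroup curve $\langle (x-1)^4-(y-1)^3,\,(y-1)^5-(z-1)^4,\,(x-1)^5-(z-1)^3\rangle$ for (b) (your nodal cubic and your parametrized $(3,4,5)$ monomial curve are interchangeable with these choices), transfers the failure of regularity, respectively of the Gorenstein property, from $Z(A_J)/\mathfrak{p}$ to $A_J/\mathfrak{p}A_J$ by exactly the Lemma~\ref{freeness} free-with-$1$-in-a-basis summand argument you describe (the paper restricts a finite projective, resp.\ injective, resolution; your change-of-rings Ext isomorphism is an equivalent and if anything cleaner packaging), and then descends to $A/Q$ because global and injective dimension cannot increase under Ore localization, citing \cite{GJ} for the injective case. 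Up to these cosmetic differences there is nothing to correct in that portion.

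The divergence is the set of cases you leave open in (b): the rank-two semisimple groups other than $SL_3(k)$, namely $SL_2(k)\times SL_2(k)$, $Sp_4(k)$ and $G_2$. You should be aware that the paper does not supply the missing idea either: its proof of (b) begins by asserting, with reference to \S\ref{OqGfacts}, that $\prim A$ contains an $H$-stratum of dimension $t\ge 3$ for every $G$ outside $\{SL_2(k),GL_2(k),SL_3(k)\}$. But \S\ref{OqGfacts} only produces strata of dimension $\rank G$, and by the Joseph/Hodges--Levasseur--Toro description underlying \cite[Corollary II.4.15]{BG} the $(w_+,w_-)$-stratum of $\Oq(G)$ has dimension equal to that of the fixed space of $w_+^{-1}w_-$ on the weight space, hence never more than $\rank G$; so the assertion is justified only when $\rank G\ge 3$, while for the three groups above every stratum has dimension at most $2$, where (as you correctly observe) every prime quotient of a two-variable Laurent polynomial ring is Gorenstein and the commutative-transfer strategy cannot detect infinite injective dimension. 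In short, your admitted gap is not a defect of your route relative to the paper's: your argument proves (a) in full and proves (b) precisely in the cases the paper's argument actually covers ($\rank G\ge 3$, including $GL_n(k)$ for $n\ge3$ and $SL_n(k)$ for $n\ge4$), and your closing paragraph in fact pinpoints a genuine gap in the paper's proof of statement (b) as formulated, rather than a weakness peculiar to your approach.
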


\begin{proof} Let $H$ be the $k$-torus acting rationally on $A$ as in \S\ref{OqGfacts}.

(a) The hypothesis on $G$ guarantees that $\prim A$ contains an $H$-stratum  of dimension $t \geq 2$, by \S\ref{OqGfacts}; choose such a stratum, $\prim_J A$. Thus, $Z(A_J)$ is a Laurent polynomial algebra over $k$ in $t$ variables. We can therefore find a prime ideal $\mathfrak{p}$ of $Z(A_J)$ such that $Z(A_J)/\mathfrak{p}$ has infinite global dimension. (For example, we might take $\mathfrak{p} = \langle (x-1)^2 - (y-1)^3 \rangle$, where $x^{\pm 1}$, $y^{\pm 1}$ are the first two Laurent variables of $Z(A_J)$.) Now set $P = \mathfrak{p}A_J$, a prime ideal of $A_J$ by Theorem \ref{strat.thm}. We claim that
\begin{equation}\label{big} 
\mathrm{gl.dim.}(A_J/P) = \infty. 
\end{equation}

For, suppose to the contrary that $\mathrm{gl.dim.}(A_J/P) = d < \infty$. Let $M$ be any left $Z(A_J)/\mathfrak{p}$-module, and consider the $A_J/P$-module $A_J/P \otimes_{Z(A_J)/\mathfrak{p}} M$. By our supposition, this module has a finite resolution by $A_J/P$-projectives. But now Lemma \ref{freeness} ensures, first, that the terms of the resolution are $Z(A_J)/\mathfrak{p}$-projective, and second, that $M$ is a direct summand of $A_J/P \otimes_{Z(A_J)/\mathfrak{p}} M$ as $Z(A_J)/\mathfrak{p}$-modules. It follows that $M$ has projective dimension at most $d$; since $M$ was arbitrary, we conclude that $\mathrm{gl.dim.}(Z(A_J)/\mathfrak{p})$ is finite, a contradiction. Thus, \eqref{big} is proved.

Now let $Q$ be the prime ideal in $\spec_J A$ such that $(Q/J) A_J = P$. By Theorem \ref{strat.thm}, $P \cap (A/J) = Q/J$, so $A_J/P$ is an Ore localisation of $A/Q$, and hence \eqref{big} implies that $A/Q$ has infinite global dimension.

(b) Let $A = \Oq(G)$, where $G$ is as stated. Then, by \S\ref{OqGfacts}, $\prim A$ has at least one $H$-stratum $\prim_J A$ of dimension $t \geq 3$. That is, $Z(A_J)$ is a Laurent polynomial $k$-algebra in variables $x_1^{\pm 1}, \ldots, x_t^{\pm 1}$. Choose a prime ideal $\mathfrak{p}$ of $Z(A_J)$ such that $Z(A_J)/\mathfrak{p}$ is not Gorenstein. For example, letting $x^{\pm 1}$, $y^{\pm 1}$, $z^{\pm 1}$ be the first three generators of $Z(A_J)$, one can take $\mathfrak{p}$ to be the prime ideal 
$$\langle (x - 1)^4 - (y-1)^3, \, (y-1)^5 - (z-1)^4, \, (x-1)^5 - (z-1)^3 \rangle$$
 of $Z(A_J)$, by, e.g., \cite[Theorem 4.3.10]{BH}. The argument now proceeds in a manner similar to (a). In brief, let $P = \mathfrak{p}A_J$, a prime ideal of $A_J$. Suppose that $A_J/P$ has finite injective dimension as a left $A_J/P$-module, with resolution
\begin{equation}\label{inj} 
0 \longrightarrow A_J/P \longrightarrow E_0 \longrightarrow \cdots \longrightarrow E_m \longrightarrow 0. 
\end{equation}
In view of Lemma \ref{freeness}, a standard and easy argument shows that each $E_i$ is an injective $Z(A_J)/\mathfrak{p}$-module. Hence, $A_J/P$ and its direct summand $Z(A_J)/\mathfrak{p}$ have finite injective dimension as $Z(A_J)/\mathfrak{p}$-modules, a contradiction. Now let $Q$ be the prime ideal in $\spec_J A$ which corresponds to $P$. If $\mathrm{inj.dim.}(A/Q)$ were finite, then the same would be true of its localisation $A_J/P$, by the exactness of Ore localisation, and by the preservation of injectivity when localising at a set of normal elements in a noetherian ring \cite[Theorem 1.3]{GJ}. However we have just shown that this is not the case. Therefore $\mathrm{inj.dim.}(A/Q) = \infty$, as required.
\end{proof}


\section{$2\times2$ quantum matrices}
\label{oqm2}

In this final section, we verify Conjecture \ref{fJKgJKconj} for $\Oq(M_2(k))$ for generic $q$. There are side benefits almost the same as those obtained for $\OqSLth$: All prime factor algebras of $\Oq(M_2(k))$ are Auslander-Gorenstein and GK-Cohen-Macaulay, and all but one of the $H$-prime factors of $\Oq(M_2(k))$ are noncommutative UFDs. The exception, namely the quotient of $\Oq(M_2(k))$ modulo its quantum determinant, exhibits a phenomenon that has not been seen before to our knowledge: This domain is nearly a noncommutative UFD in that all but four of its height $1$ prime ideals are principal, while four are not.

Let $A= \Oq(M_2(k))$ throughout this section, with $q\in\kx$ a non-root of unity. Just as in Example \ref{oqgl2}, use
the standard abbreviations $a$, $b$, $c$, $d$ for the generators of $A$, let $\Delta$ denote the quantum determinant
in $A$, and let $H = (\kx)^4$ act rationally on $A$ as in \eqref{kx4action}. It is well known that $A$ has exactly
$14$ $H$-primes (e.g., \cite[\S3.6]{GLmurcia}). Since $A$ satisfies the noncommutative Nullstellensatz (e.g.,
\cite[Corollary II.7.18]{BG}), Assumptions \ref{AHetc} hold.

We display the poset $\Hspec A$ in Figure \ref{HprimesOqM2} below, where we again abbreviate descriptions of ideals by
omitting angle brackets and commas. Whenever we display quantities indexed by $\Hspec A$, we place the quantity
indexed by a given $H$-prime $J$ in the same relative position that $J$ occupies in Figure \ref{HprimesOqM2}. See
\eqref{EtilJ}, \eqref{ZOqM2J}, \eqref{maxgen}.

There is a transpose automorphism $\tau$ on $A$, which sends $a$, $b$, $c$, $d$ to $a$, $c$, $b$, $d$, and an
anti-automorphism $\rho$ which sends $a$, $b$, $c$, $d$ to $d$, $b$, $c$, $a$.

\begin{figure}[htp] 
$$\xymatrixrowsep{2.6pc}\xymatrixcolsep{2.5pc} \xymatrix{
 &&abcd\\
abd \edge[urr] &abc \edge[ur] &&bcd \edge[ul] &acd \edge[ull]\\ ab \edge[u] \edge[ur]
 &bd \edge[ul] \edge[urr]
 &bc \edge[ul] \edge[ur]
 &ac \edge[ull] \edge[ur]
 &cd \edge[ul] \edge[u]\\
 &b \edge[ul] \edge[u] \edge[ur]
 &\Delta \edge[ull] \edge[ul] \edge[ur] \edge[urr]
 &c \edge[ul] \edge[u] \edge[ur]\\
 &&0 \edge[ul] \edge[u] \edge[ur]
}$$
 \caption{$\Hspec \Oq(M_2(k))$}    \label{HprimesOqM2}
 \end{figure}

\begin{lemma} \label{partHufdOqM2} 
{\rm (a)} Let $J\subset K$ be $H$-primes of $A$ such that $\hgt(K/J)=1$. If $J\ne
\Delta$, then $K/J$ is generated by a normal element, while if $J= \Delta$, then $K/J$ cannot be generated by a normal
element.

{\rm (b)} $A/J$ is a UFD for all $H$-primes $J \ne \Delta$.

{\rm (c)} Every $H$-prime of $A$ can be generated by a polynormal regular sequence. 
\end{lemma}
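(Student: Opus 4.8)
The plan is to establish the three parts in the order (a), (b), (c), with (a) feeding the other two. The structural fact I would lean on is that for every $H$-prime $J\neq\langle\Delta\rangle$ the algebra $A/J$ is an iterated skew polynomial ring over $k$ on those of the generators $a,b,c,d$ that survive in $A/J$ -- a quantum affine space, or an ordinary polynomial ring -- in which each of those generators is normal; I would also use the symmetries $\tau$ and $\rho$ throughout to cut down the casework. For the positive assertion of (a) it then suffices to run through the covers $J\subsetneq K$ of Figure \ref{HprimesOqM2} with $J\neq\langle\Delta\rangle$. If $J=0$, then $K\in\{\langle b\rangle,\langle\Delta\rangle,\langle c\rangle\}$, generated respectively by the normal element $b$, the central element $\Delta$, and the normal element $c$. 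Otherwise $J$ and $K$ are both ``monomial'' $H$-primes, and since $\hgt(K/J)=1$ the ideal $K$ is obtained from $J$ by adjoining exactly one further generator $\bar x$ of $A/J$, so $K/J=\langle\bar x\rangle$ with $\bar x$ normal.

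The negative assertion of (a) is where the work lies, and I expect it to be the main obstacle. Put $R:=A/\langle\Delta\rangle$, a noetherian domain (the quantum determinant generates a completely prime ideal of $A$; cf.\ \cite{LLR}). The $H$-primes $K$ with $\hgt(K/\langle\Delta\rangle)=1$ are the images of $\langle a,b\rangle$, $\langle a,c\rangle$, $\langle b,d\rangle$, $\langle c,d\rangle$, and these four are permuted transitively by the group generated by $\tau$ and $\rho$ (both of which fix $\langle\Delta\rangle$), so it is enough to treat $K=\langle a,b\rangle$. This is the quantum analogue of the classical fact that the Weil divisor $V(a,b)$ on the cone $\{\det=0\}\subseteq M_2(k)$ is not Cartier, and I would detect it by a degree argument. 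Suppose $\overline K:=K/\langle\Delta\rangle=uR$ with $u$ normal; since $\overline K$ is $H$-stable, we may take $u$ to be a homogeneous $H$-eigenvector (standard for $H$-stable principal ideals -- e.g.\ by passing to leading terms for a total ordering on $X(H)$ making the degrees of $a,b,c,d$ positive; cf.\ the proof of Proposition \ref{normgen}). From $\bar a,\bar b\in uR$ we get $\bar a=ur_1$ and $\bar b=ur_2$ with $r_1,r_2$ nonzero and homogeneous, so $\deg u$ and each of $\deg\bar a-\deg u$, $\deg\bar b-\deg u$ lie in the submonoid $M\subseteq X(H)$ generated by the degrees of $a,b,c,d$. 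Identifying $X(H)\cong\ZZ^4$, the degrees of $a,b,c,d$ are four vectors with nonnegative, even-sum coordinates, and a short combinatorial check then forces $\deg u\in\{0,\deg\bar a\}\cap\{0,\deg\bar b\}=\{0\}$; hence $u\in R_0=k$ and $\overline K=R$, a contradiction. The only delicate points are this reduction to a homogeneous generator and keeping the bookkeeping with $M$ straight; the degree computation itself is brief.

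For (b), with $J\neq\langle\Delta\rangle$ the algebra $A/J$ is a noetherian domain, so by the Launois--Lenagan--Rigal criterion already invoked for Theorem \ref{HUFDfactors} (\cite[Proposition 1.6, Theorem 3.6]{LLR}, cf.\ \cite[Theorem 2.3]{GYcgl}) it suffices that every nonzero $H$-prime of $A/J$ contain a prime $H$-eigenvector. The minimal nonzero $H$-primes of $A/J$ are the $K/J$ with $\hgt(K/J)=1$, which by the positive part of (a) are generated by normal $H$-eigenvectors $\bar p$; and $(A/J)/\langle\bar p\rangle=A/K$ is a domain, so each $\bar p$ is a prime element. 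Since every nonzero $H$-prime of $A/J$ contains one of these minimal ones, $A/J$ is a noncommutative UFD.

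For (c), I would exhibit, for each $H$-prime $J$, a chain $0=J_0\subsetneq J_1\subsetneq\cdots\subsetneq J_m=J$ of $H$-primes with $\hgt(J_i/J_{i-1})=1$ in which $\langle\Delta\rangle$ does not occur among $J_0,\dots,J_{m-1}$. This is possible because -- as one reads off Figure \ref{HprimesOqM2} -- every $H$-prime of $A$ properly containing $\langle\Delta\rangle$ also contains $b$ or $c$, so such a chain can always be routed through $\langle b\rangle$ or $\langle c\rangle$ rather than through $\langle\Delta\rangle$; for $J=\langle\Delta\rangle$ itself one uses the chain $0\subsetneq\langle\Delta\rangle$, with $\Delta$ central. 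Along any such chain each $A/J_{i-1}$ is a domain, and by the positive part of (a) (or, when $J_{i-1}=0$, directly) the ideal $J_i/J_{i-1}$ is generated by a single normal -- hence, in a domain, regular normal -- element $\bar u_i$; lifting the $\bar u_i$ to elements $u_i\in A$ and inducting on $i$ gives $J_i=\langle u_1,\dots,u_i\rangle$, so $u_1,\dots,u_m$ is a polynormal regular sequence generating $J$. The only ingredient here beyond (a) is the elementary routing observation.
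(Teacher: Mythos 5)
Your proposal is correct, and for the one substantive point of the lemma it takes a genuinely different route from the paper. The positive half of (a), and parts (b) and (c), are handled in the paper exactly as you do: (a) by inspection of Figure \ref{HprimesOqM2}, (b) by feeding the height-one normal $H$-eigenvector generators into the Launois--Lenagan--Rigal criterion as in Theorem \ref{HUFDfactors}, and (c) by reading polynormal regular sequences off the figure (your ``route the chain through $\langle b\rangle$ or $\langle c\rangle$'' observation is just a more explicit version of this). Where you diverge is the proof that $\langle a,b\rangle/\langle\Delta\rangle$ is not generated by a normal element. The paper argues without any homogeneity reduction: writing $K=\langle\Delta,u\rangle$ and using that $(b,a)$ and $(\Delta,u)$ are polynormal sequences, it produces explicit membership equations $a=r_1\Delta+r_2u$, $b=s_1\Delta+s_2u$, $u=t_1a+t_2b$, passes to the skew polynomial ring $A/\langle c,d\rangle$ to force $u$ to be a nonzero scalar modulo $\langle c,d\rangle$, and then contradicts the standard $\NN$-grading of $A$. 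You instead first reduce to a homogeneous normal generator and then run a weight count in $X(H)\cong\ZZ^4$: since every nonzero degree occurring in $A/\langle\Delta\rangle$ has coordinate sum at least $2$ while $\deg\bar a\neq\deg\bar b$ each have sum exactly $2$, any homogeneous normal generator of $\langle a,b\rangle/\langle\Delta\rangle$ would have degree $0$, hence be a unit. This is correct, and the homogeneity reduction you gesture at is in fact automatic: if a graded ideal of a domain graded by a totally ordered abelian group (the order on $X(H)$ fixed in Assumptions \ref{AHetc}) equals $uR$ with $u$ normal, then comparing lowest and leading terms in $u_t=ur$ forces $u$ itself to be homogeneous, so no choice is involved. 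Your argument is arguably more conceptual and symmetric in the four excluded $H$-primes, at the cost of this graded-generator lemma; the paper's is more computational but entirely elementary, needing only the total-degree grading and the quotient $A/\langle c,d\rangle$.
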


\begin{proof} (a) The first statement is clear by inspection of Figure \ref{HprimesOqM2}. Now let $J= \Delta$ and $K=
ab$, and suppose that $K/J$ is generated by a normal element $u+J$. Then $K = \langle \Delta,u\rangle$. Since $(b,a)$
and $(\Delta,u)$ are polynormal sequences, the left ideals they generate are the same as the two-sided ideals. Hence,
there exist $r_1,r_2,s_1,s_2,t_1,t_2 \in A$ such that 
\begin{align*} a &= r_1\Delta+r_2u  &b &= s_1\Delta+s_2u  &u &=
t_1a+t_2b \;. 
\end{align*}

Transfer these equations to $A/cd$, which is a skew polynomial ring $k[a][b;\sigma]$. Here, $a= r_2u$ and $b= s_2u$,
from which it follows that $u$ is a nonzero scalar. Returning to $A$, we have $u= \alpha+ p_1c+ p_2d$ for some
$\alpha\in\kx$ and  $p_1,p_2\in A$. Thus, 
$$t_1a+ t_2b- p_1c- p_2d= \alpha.$$ 
This is impossible, since $A$ is a
positively graded ring in which $a$, $b$, $c$, $d$ are homogeneous of degree $1$.

Therefore $ab/ \Delta$ cannot be generated by a normal element. The cases $K= bd$, $ac$, $cd$ follow by symmetry (via
$\tau$ and $\rho$).

(b) This follows from part (a) and the arguments of \cite{LLR} (cf.~\cite[Theorem 2.3]{GYcgl}).

(c) This is clear from Figure \ref{HprimesOqM2}. 
\end{proof}

Define multiplicative sets $\calEtil_J \subseteq \calE_J$ for $J\in \Hspec A$ as in \eqref{EtilJ}. It follows from
Lemma \ref{shrinkEJ}(c) that $Z(A_J) = Z((A/J)[\calEtil_J^{-1}]$ for all $J$.

\begin{equation}  \label{EtilJ} {\begin{matrix}
 &&\{\kx\}  \\  \\
\{\kx c^\bullet\} &\{\kx d^\bullet\} &&\{\kx a^\bullet\} &\{\kx b^\bullet\}  \\  \\ \{\kx c^\bullet d^\bullet\} &\{\kx
a^\bullet c^\bullet\} &\{\kx a^\bullet d^\bullet\} &\{\kx b^\bullet d^\bullet\} &\{\kx a^\bullet b^\bullet\}  \\  \\
 &\{\kx a^\bullet c^\bullet d^\bullet\} &\{\kx a^\bullet b^\bullet c^\bullet d^\bullet\} &\{\kx a^\bullet b^\bullet
 d^\bullet\}  \\  \\
 &&\{\kx b^\bullet c^\bullet \Delta^\bullet\}
\end{matrix}} 
\end{equation}

Consider the following subalgebras of the algebras $A_J$ for $J \in \Hspec A$: 
\begin{equation}  \label{ZOqM2J}
{\begin{matrix}
 &&k  \\  \\
k[c^{\pm1}] &k[d^{\pm1}] &&k[a^{\pm1}] &k[b^{\pm1}]  \\  \\ k &k &k[a^{\pm1},d^{\pm1}] &k &k  \\  \\
 &k[(ad)^{\pm1}] &k[(bc^{-1})^{\pm1}] &k[(ad)^{\pm1}]  \\  \\
 &&k[(bc^{-1})^{\pm1},\Delta^{\pm1}]
\end{matrix}} 
\end{equation}

\begin{lemma} \label{ZOqM2verif} 
For each $J \in \Hspec A$, the algebra shown in position $J$ of \eqref{ZOqM2J} equals
the center of $A_J$. 
\end{lemma}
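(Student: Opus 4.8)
The plan is to compute $Z(A_J)$ for each of the fourteen $H$-primes $J$ by working inside the localization $(A/J)[\calEtil_J^{-1}]$, which has the same center as $A_J$ by the consequence of Lemma \ref{shrinkEJ}(c) recorded just before the statement. First I would cut down the number of cases using the transpose automorphism $\tau$ and the anti-automorphism $\rho$: each permutes $\Hspec A$ and, as one checks directly against \eqref{EtilJ}, carries $\calEtil_J$ onto $\calEtil_{\tau(J)}$, respectively $\calEtil_{\rho(J)}$. Hence $\tau$ induces an isomorphism $A_J \cong A_{\tau(J)}$ and $\rho$ an anti-isomorphism $A_J \cong A_{\rho(J)}$, each preserving centers. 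Computing the orbits of $\langle\tau,\rho\rangle$ on Figure \ref{HprimesOqM2} leaves eight representatives, say $J = \langle a,b,c,d\rangle$, $\langle a,b,c\rangle$, $\langle a,b,d\rangle$, $\langle a,b\rangle$, $\langle b,c\rangle$, $\langle b\rangle$, $\langle \Delta\rangle$, and $0$; since one checks in passing that the entries displayed in \eqref{ZOqM2J} are themselves $\tau$- and $\rho$-compatible, verifying them at the representatives suffices.

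For seven of these representatives $A/J$ is a ``quantum affine space'': it is $k$ when $J=\langle a,b,c,d\rangle$; the polynomial rings $k[d]$ and $k[c]$ when $J=\langle a,b,c\rangle$ and $\langle a,b,d\rangle$; the quantum plane $k_q[c,d]$ when $J=\langle a,b\rangle$; the commutative ring $k[a,d]$ when $J=\langle b,c\rangle$; and the quantum affine $3$-space on $a,c,d$ with $ac=qca$, $cd=qdc$, $ad=da$ when $J=\langle b\rangle$. In each case $\calEtil_J$ inverts exactly the surviving generators, which are normal, so $\calEtil_J$ is an Ore set and $(A/J)[\calEtil_J^{-1}]$ is the associated quantum torus (or Laurent polynomial ring, or $k$); that the evident surjection onto it from the abstract quantum torus is an isomorphism follows from a normal-form argument, or a Gelfand--Kirillov dimension count against the domain $A/J$. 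The center of a quantum torus is the span of those monomials in the invertible generators whose exponent vector lies in the kernel of the commutation matrix over $\ZZ$ --- here one uses that $q$ is not a root of unity --- and solving this linear system returns exactly $k$, $k[c^{\pm1}]$, $k[d^{\pm1}]$, $k$, $k[a^{\pm1},d^{\pm1}]$ and $k[(ad)^{\pm1}]$, matching \eqref{ZOqM2J}.

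For $J=\langle\Delta\rangle$ the same method applies once one records that, modulo $\Delta$, the identity $ad=qbc$ together with $ad-da=(q-q^{-1})bc$ forces $da=q^{-2}ad$; thus $a,b,c,d$ are all normal modulo $\Delta$, and after inverting them $d=qa^{-1}bc$, so $(A/\langle\Delta\rangle)[\calEtil_\Delta^{-1}]$ is the quantum torus on $a^{\pm1},b^{\pm1},c^{\pm1}$ with $ab=qba$, $ac=qca$, $bc=cb$, whose center is $k[(bc^{-1})^{\pm1}]$. For $J=0$ one observes the identification of algebras $(A/0)[\calEtil_0^{-1}]=\Oq(M_2(k))[\Delta^{-1},b^{-1},c^{-1}]=\Oq(GL_2(k))[b^{-1},c^{-1}]=\Atil_0$, so its center is $k[(bc^{-1})^{\pm1},\Delta^{\pm1}]$ by the computation already carried out in Example \ref{oqgl2}. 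This disposes of all eight representatives.

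Rather than a genuine obstacle, the only points that need care are the localization bookkeeping in the cases $J=\langle\Delta\rangle$ and $J=\langle b\rangle$: verifying that the elements of $\calEtil_J$ are normal (so that the localization is flat and the relevant center is unchanged) and that the localized algebra is the full quantum torus rather than a proper homomorphic image, for which the dimension count against $A/J$ suffices. Everything else is routine inspection of Figure \ref{HprimesOqM2} and \eqref{EtilJ} together with the standard center computation for quantum tori.
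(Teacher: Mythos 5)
Your proof is correct and follows essentially the same route as the paper: localize at $\calEtil_J$, identify the result as a quantum torus (or Laurent polynomial ring, or $k$), and compute the center by checking which monomials are central, using that $q$ is not a root of unity. The only differences are organizational --- your upfront reduction to eight $\langle\tau,\rho\rangle$-orbit representatives (the paper invokes symmetry only to get $J=\langle c\rangle$ from $J=\langle b\rangle$) and your deferral of the $J=0$ case to the center recorded in Example \ref{oqgl2}, which is the same rank-$4$ quantum torus computation the paper performs directly.
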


\begin{proof} We use the relations $Z(A_J) = Z((A/J)[\calEtil_J^{-1}]$ without comment.

The conclusion is clear if $J= abcd$, in which case $A/J = k$, and if $J$ is one of $abd$, $abc$, $bcd$, $acd$, in
which cases $A/J= k[c],k[d],k[a],k[b]$, respectively.

If $J$ is one of $ab$, $bd$, $ac$, $cd$, then $A/J$ is a copy of $\Oq(k^2)$. Since $\Fract \Oq(k^2)= \Oq((\kx)^2)$ has
center $k$, it follows that $Z(A_J) = k$ in these cases. The case $J= bc$ is clear, because then $A/J= k[a,d]$.

Now let $J= b$. In this case, $A_J$ is a quantum torus generated by $a^{\pm1}$, $c^{\pm1}$, $d^{\pm1}$, and we check
that monomials $a^ic^jd^l$ are central if and only if $j=0$ and $i=l$. Thus, $Z(A_J) = k[(ad)^{\pm1}]$. The same holds
when $J= c$, by symmetry.

Next, let $J= \Delta$. In $A_J$, we have $d= qa^{-1}bc$, and consequently $A_J$ is a quantum torus generated by
$a^{\pm1}$, $b^{\pm1}$, $c^{\pm1}$. We check that monomials $a^ib^jc^l$ are central if and only if $i= j+l= 0$. Thus,
$Z(A_J)= k[(bc^{-1})^{\pm1}]$.

Finally, let $J= 0$, and observe that $A[\calEtil_0^{-1}]$ is a quantum torus of rank $4$, with generators $a^{\pm1}$,
$b^{\pm1}$, $c^{\pm1}$, $\Delta^{\pm1}$. We check that monomials $a^ib^jc^l\Delta^m$ are central if and only if $i=
j+l= 0$. Thus, $Z(A_J)= k[(bc^{-1})^{\pm1},\Delta^{\pm1}]$. 
\end{proof}

Generating sets for the maximal ideals of the algebras $Z(A_J)$ can be given as follows, where $\alpha$, $\beta$,
$\delta$, $\gamma$, $\lambda$, $\mu$ are arbitrary nonzero scalars from $k$. 
\begin{equation}  \label{maxgen}
\begin{matrix}
 &&0  \\  \\
c-\gamma &d-\delta &&a-\alpha &b-\beta  \\  \\ 0 &0 &a-\alpha,\, d-\delta &0 &0  \\  \\
 &ad-\mu &b-\lambda c &ad-\mu  \\  \\
 &&b-\lambda c,\, \Delta-\mu
\end{matrix} 
\end{equation}

\begin{lemma}  \label{primJmodJnormgen} 
For each $J \in \Hspec A$, the elements listed in position $J$ of
\eqref{maxgen} form a regular normal sequence in $A/J$, and they generate a primitive ideal of $A/J$. These ideals
cover all quotients $P/J$ for $P \in \prim_J A$. 
\end{lemma}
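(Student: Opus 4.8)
The plan is to work case by case over the fourteen $H$-primes $J$ of Figure~\ref{HprimesOqM2}, cutting down the work with the symmetries $\tau$ and $\rho$. Fix $J$ and an admissible choice of the scalars $\alpha,\beta,\gamma,\delta,\lambda,\mu$; write $u_1,\dots,u_r$ for the corresponding list in \eqref{maxgen}, let $I\subseteq A/J$ be the ideal they generate, and let $P\subseteq A$ be the preimage of $I$. The common mechanism is to pass to the localization $A_J=(A/J)[\calEtil_J^{-1}]$ (with $\calEtil_J$ as in \eqref{EtilJ}; recall from Lemma~\ref{shrinkEJ} that this computes $\spec_J A$ and $Z(A_J)$), to identify $A_J/IA_J$ as a simple ring --- in each nontrivial case a quantum torus or a skew-Laurent ring over $k$ --- so that $IA_J$ is a maximal, hence completely prime, ideal of $A_J$, and then to check that $I$ equals its own contraction $IA_J\cap(A/J)$. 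Granting this, $I$ is prime, $P\in\spec A$, and $P\in\spec_J A$ because $I$ meets no element of $\calEtil_J$ (every nonzero $H$-ideal of $A/J$ meets $\calEtil_J$ by the choice of these sets, so $P/J$ contains no nonzero $H$-ideal of $A/J$); moreover $PA_J\cap Z(A_J)$ contains the stated generators of a maximal ideal of $Z(A_J)$ (e.g. $bc^{-1}-\lambda=(b-\lambda c)c^{-1}\in IA_J$), so by Lemma~\ref{ZOqM2verif} it \emph{equals} that maximal ideal. Theorem~\ref{strat.thm}(d) then makes $P$ a maximal element of $\spec_J A$, i.e. $P\in\prim_J A$; and since $k=\kbar$, every maximal ideal of the Laurent polynomial ring $Z(A_J)$ is of the stated form, so the homeomorphism $\prim_J A\approx\max Z(A_J)$ shows the ideals $P/J$ exhaust $\prim_J A$. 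The normality and regularity of the sequence $u_1,\dots,u_r$ is read off in passing: in each case the $u_i$ are central in $A/J$ (the elements $a-\alpha$, $d-\delta$ in the commutative quotients, and $ad-\mu$ in $A/\langle b\rangle$, where $ad$ is central), or are $\kx$-combinations of the normal elements $b,c$ (the element $b-\lambda c$), or involve the central element $\Delta$; regularity of the successive terms follows once the relevant quotients are known to be domains.

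Ten of the fourteen cases are immediate. If $J\in\{abcd,\ abd,\ abc,\ bcd,\ acd,\ bc\}$ then $A/J$ is $k$ or a commutative polynomial ring and $I$ is a maximal ideal, so nothing is needed beyond the classical fact that the listed generators form a regular sequence. If $J\in\{ab,\ bd,\ ac,\ cd\}$ then $A/J\cong\Oq(k^2)$, with centre $k$, so $\prim_J A$ is a single point, $I=0$, and there is nothing to prove. The transpose automorphism $\tau$ identifies the case $J=c$ with $J=b$, so only $J\in\{b,\ \Delta,\ 0\}$ remain.

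For $J=b$ one checks that $A/\langle b\rangle=k\langle a,c,d\mid ac=qca,\ cd=qdc,\ ad=da\rangle$ is an iterated Ore extension $k[c][a;\sigma][d;\sigma',\delta']$, hence a noetherian domain, that $ad$ is central in it, and that in $(A/\langle b\rangle)/\langle ad-\mu\rangle$ the element $a$ becomes invertible with inverse $\mu^{-1}d$; that quotient is thus the skew-Laurent ring $k[a^{\pm1}][c;\sigma]$, a domain, and passing to $A_b$ (a quantum torus of rank $3$ on $a^{\pm1},c^{\pm1},d^{\pm1}$) one sees $IA_b$ kills $ad-\mu$ and leaves a simple quantum torus of rank $2$. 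For $J=\Delta$ one uses that $A/\langle\Delta\rangle$ is a noetherian domain (all prime factors of $A$ are completely prime), that $b-\lambda c$ is normal there (it is normal in $A$), and that $A/\langle\Delta,\,b-\lambda c\rangle$ is again a domain; in $A_\Delta$ (a quantum torus of rank $3$ on $a^{\pm1},b^{\pm1},c^{\pm1}$), $IA_\Delta$ kills $b-\lambda c$ and leaves the simple quantum torus of rank $2$ on $a^{\pm1},c^{\pm1}$, whose contraction to $Z(A_\Delta)=k[(bc^{-1})^{\pm1}]$ is the generic maximal ideal $\langle bc^{-1}-\lambda\rangle$. For $J=0$ the key point is that the $H$-action scales $\Delta$ by an arbitrary element of $\kx$ (from \eqref{kx4action}, $\Delta\mapsto\alpha_1\alpha_2\beta_1\beta_2\,\Delta$), so $A/\langle\Delta-\mu\rangle\cong\Oq(SL_2(k))$ for every $\mu\in\kx$ via an isomorphism carrying $\langle b-\lambda c\rangle$ to some $\langle b-\lambda'c\rangle$ with $\lambda'\in\kx$; hence $A/\langle b-\lambda c,\ \Delta-\mu\rangle\cong\Oq(SL_2(k))/\langle b-\lambda'c\rangle$, which is a domain (these are the primitive factors of $\Oq(SL_2(k))$ along its bottom $H$-stratum), and in $A_0$ (a quantum torus of rank $4$ on $a^{\pm1},b^{\pm1},c^{\pm1},\Delta^{\pm1}$) the ideal $IA_0$ kills $b-\lambda c$ and $\Delta-\mu$ and leaves a simple quantum torus of rank $2$, giving $PA_0\cap Z(A_0)=\langle bc^{-1}-\lambda,\ \Delta-\mu\rangle$, the generic maximal ideal of $Z(A_0)=k[(bc^{-1})^{\pm1},\Delta^{\pm1}]$.

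The step I expect to be the main obstacle is the domain property for the two largest quotients, i.e. that $A/\langle\Delta,\,b-\lambda c\rangle$ (case $J=\Delta$) and $\Oq(SL_2(k))/\langle b-\lambda'c\rangle$ (case $J=0$) are domains; equivalently, that $\langle b-\lambda c\rangle$ is $\calEtil_J$-saturated in $A/\langle\Delta\rangle$ and in $A/\langle\Delta-\mu\rangle$, which is exactly what forces $I$ to coincide with its contraction from $A_J$. One can obtain this either by exhibiting $A/\langle b-\lambda c\rangle$ as an iterated Ore extension $k[c][a;\sigma][d;\sigma',\delta']$ (a domain) and localizing at the normal element $c$, or by appealing to the known prime-ideal structure of $\Oq(SL_2(k))$ and $\Oq(M_2(k))$ (cf.\ \cite[\S3.6]{GLmurcia}, \cite{BG}). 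Everything past that point --- that $P$ lies in $\spec_J A$, that it is maximal there, and that $PA_J\cap Z(A_J)$ is the expected maximal ideal --- is forced by Theorem~\ref{strat.thm}, Lemma~\ref{shrinkEJ}, and Lemma~\ref{ZOqM2verif}.
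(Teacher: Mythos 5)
Your reduction follows the same skeleton as the paper's: identify the quotients $P/J$ for $P\in\prim_J A$ with the contractions $QA_J\cap(A/J)$ of the ideals $Q$ generated by the elements of \eqref{maxgen} (via Theorem \ref{strat.thm} and Lemma \ref{ZOqM2verif}), dispose of the ten commutative or $\Oq(k^2)$ strata, and observe that everything hinges on the equality $Q=QA_J\cap(A/J)$, i.e.\ on a primeness/$\calEtil_J$-saturation statement, for $J=b,c,\Delta,0$. Your treatment of $J=b$ (and $J=c$ via $\tau$) is complete. The genuine gap is exactly the step you flag as the main obstacle and then leave unproved: that $\langle b-\lambda c,\,\Delta-\mu\rangle$ is completely prime in $A$ when $(\lambda,\mu)\neq(0,0)$, which is what forces the contraction equality for $J=\Delta$ and $J=0$. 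Of your two suggested routes, the first does not work as stated: writing $R:=A/\langle b-\lambda c\rangle$ as an iterated Ore extension and ``localizing at the normal element $c$'' only gives control of $R[c^{-1}]/\langle\Delta-\mu\rangle$; to descend to $R/\langle\Delta-\mu\rangle$ you need the $c$-saturation of $\langle\Delta-\mu\rangle$ in $R$, which is equivalent to the contraction statement you are trying to establish, so the argument is circular. The second route can be made to work for $J=0$ (your $H$-rescaling of $\Delta$ correctly reduces to $\Oq(SL_2(k))/\langle b-\lambda' c\rangle$, whose domain property is standard), but it says nothing about $J=\Delta$, where $\mu=0$ and $\Oq(SL_2(k))$ is irrelevant; there you would have to invoke and verify the determinantal-ring literature instead.

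The paper closes this gap uniformly with one structural observation: $A/\langle b-\lambda c,\,\Delta-\mu\rangle$ is the generalized Weyl algebra $k[c](\phi,\lambda qc^2+\mu)$ with $\phi(c)=q^{-1}c$, hence a domain by Bavula's criterion since $\lambda qc^2+\mu\neq0$; the specializations $(\lambda=0,\mu\neq0)$, $(\lambda\neq0,\mu=0)$, $(\lambda,\mu\neq0)$ then settle $J=b,c$, $J=\Delta$ and $J=0$ at once, while the complete primeness of $\langle b-\lambda c\rangle$ (your iterated Ore extension, which is also the paper's) supplies the regularity needed for the regular normal sequence statement. Supplying this GWA computation (or an equivalent saturation argument covering $J=\Delta$) is what your proposal is missing; as written, the central case is asserted rather than proved. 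A smaller point: your parenthetical for $P\in\spec_J A$ is stated backwards --- what must be checked is that $P/J$ is disjoint from $\calEtil_J$ (which follows from complete primeness of $P$ once the images of $a,b,c,d,\Delta$ are known to be nonzero in the quotient), and only then does the fact that every nonzero $H$-prime of $A/J$ meets $\calEtil_J$ place $P$ in the $J$-stratum.
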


\begin{proof} The statement about regular normal sequences is clear for $J \ne bc,0$. We deal with the cases $J =
bc,0$ later.

In view of Lemma \ref{ZOqM2verif} and Theorem \ref{strat.thm}, the quotients $P/J$ for $P \in \prim_J A$ are exactly
the ideals $QA_J \cap (A/J)$ where $Q$ is the ideal of $A/J$ generated by the elements in position $J$ of
\eqref{maxgen}, for some choice of scalars. Thus, we need to show that each such $Q$ equals $QA_J \cap (A/J)$. That
equality holds if $(A/J)/Q$ is $\calE_J$-torsionfree, so it will suffice to show that $Q$ is a prime ideal of $A/J$.
This is trivial when $J$ is one of $abcd$, $ab$, $bd$, $ac$, $cd$. The cases when $J$ is one of $abd$, $abc$, $bcd$,
$acd$, $bc$ are clear since then $A/J$ is a commutative polynomial ring, namely $k[c]$, $k[d]$, $k[a]$, $k[b]$,
$k[a,d]$, respectively.

The remaining four cases are based on the following claims: 
\begin{enumerate} 
\item $\langle b-\lambda c\rangle$ is a
prime ideal of $A$, for all $\lambda \in k$. 
\item $\langle b-\lambda c, \, \Delta -\mu \rangle$ is a prime ideal of
$A$ for all $(\lambda,\mu) \in k^2
    \setminus \{(0,0)\}$.
\end{enumerate} 
The case $J=b$ follows from (2) with $\lambda=0$ and $\mu\ne 0$, the case $J=c$ is symmetric to the
previous one, the case $J= \Delta$ follows from (2) with $\lambda \ne 0$ and $\mu=0$, and the case $J=0$ follows from
(2) with $\lambda,\mu \ne 0$. Moreover, it follows from (1) that $(b-\lambda c, \, \Delta- \mu)$ is a regular normal
sequence in $A$. Since $A/bc= k[a,d]$, we see that $(a-\alpha, \, d-\delta)$ is a regular normal sequence in $A/bc$.
Thus, what is left is to establish (1) and (2).

The algebra $A/\langle b-\lambda c\rangle$ has a presentation with generators $a$, $c$, $d$ and relations
\begin{align*} 
ac &= qca  &cd &= qdc  &ad-da = \lambda (q-q^{-1}) c^2 \,. 
\end{align*} 
It follows that this algebra is
an iterated skew polynomial ring of the form 
$$k[a] [c;\sigma_2] [d; \sigma_3,\delta_3],$$ 
and hence a domain. This
proves (1).

Now set $B := A/ \langle b-\lambda c, \, \Delta -\mu \rangle$, where $(\lambda,\mu) \in k^2 \setminus \{(0,0)\}$. This
algebra has a presentation with generators $a$, $c$, $d$ and relations 
\begin{align*} 
ac &= qca  &cd &= qdc  \\ ad &=
\lambda qc^2 + \mu  &da &= \lambda q^{-1}c^2 + \mu \,. 
\end{align*} 
It can also be viewed as generated by a copy of
the polynomial ring $k[c]$ together with elements $a$ and $d$ such that 
\begin{align*} 
dr &= \phi(r)d \quad \forall\,
r\in k[c]  &ar &= \phi^{-1}(r)a \quad \forall\, r\in k[c]  \\ ad &= \lambda qc^2 + \mu  &da &= \phi( \lambda qc^2 +
\mu ) \,, 
\end{align*} 
where $\phi$ is the $k$-algebra automorphism of $k[c]$ such that $\phi(c) = q^{-1}c$. Hence,
$B$ is a generalized Weyl algebra, of the form $k[c](\phi,\lambda qc^2 + \mu)$. Since $k[c]$ is a domain and $\lambda
qc^2 + \mu$ is nonzero, $B$ is a domain \cite[Proposition 1.3(2)]{Bav}. Therefore (2) holds. 
\end{proof}

\begin{theorem} \label{secondP/Jnormgen} 
Let $J \in \Hspec A$ and $P \in \spec_J A$. Then $P/J$ is generated by a
regular normal sequence, and $P$ is generated by a polynormal regular sequence. 
\end{theorem}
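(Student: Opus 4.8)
The plan is to mirror the proof of Theorem~\ref{P/Jnormgen}, using the poset of Figure~\ref{HprimesOqM2} and the data already assembled in Lemmas~\ref{partHufdOqM2}, \ref{ZOqM2verif}, and~\ref{primJmodJnormgen}. First I would reduce to the first assertion. By Lemma~\ref{partHufdOqM2}(c) each $H$-prime $J$ has a polynormal regular sequence of generators $v_1,\dots,v_n$; if $\bar u_1,\dots,\bar u_m$ is a regular normal sequence generating $P/J$, then choosing lifts $u_i\in A$ and forming $v_1,\dots,v_n,u_1,\dots,u_m$ gives a polynormal regular sequence generating $P$, since each $u_i$ is normal and regular modulo $\langle v_1,\dots,v_n,u_1,\dots,u_{i-1}\rangle$ (the quotient of $A$ by that ideal being $(A/J)/\langle\bar u_1,\dots,\bar u_{i-1}\rangle$). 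So it suffices to generate $P/J$ by a regular normal sequence in $A/J$, and there is nothing to prove when $P=J$.

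Next I would split on $\dim Z(A_J)$, read off from~\eqref{ZOqM2J} via Lemma~\ref{ZOqM2verif}. For every $H$-prime $J$ other than $bc$ and $0$, $Z(A_J)$ has Krull dimension at most $1$, so by Theorem~\ref{strat.thm}(b) the stratum $\spec_J A\approx\spec Z(A_J)$ satisfies $\spec_J A=\{J\}\sqcup\prim_J A$; hence any $P\ne J$ in $\spec_J A$ is primitive, and Lemma~\ref{primJmodJnormgen} exhibits $P/J$ as generated by the regular normal sequence listed in position $J$ of~\eqref{maxgen}. This settles in particular the delicate case $J=\Delta$, where $A/\Delta$ is \emph{not} a UFD: because $Z(A_\Delta)=k[(bc^{-1})^{\pm1}]$ is one-dimensional, every prime of $\spec_\Delta A$ other than $\Delta$ is primitive, and the ideals $\langle b-\lambda c\rangle/\Delta$ that occur are each generated by a single normal element.

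There remain the two cases $J=bc$ and $J=0$, where $A/J$ is a noetherian UFD (Lemma~\ref{partHufdOqM2}(b), since $bc,0\ne\Delta$) and $\dim Z(A_J)=2$. If $\hgt(P/J)=1$, then $P/J$ contains a prime element $u$ of $A/J$; by the noncommutative principal ideal theorem $\langle u\rangle$ is a height-$1$ prime, so $\langle u\rangle=P/J$, and $u$ is regular since $A/J$ is a domain, giving a regular normal sequence of length $1$. If $\hgt(P/J)\ge 2$, I claim $P$ is a maximal element of $\spec_J A$, hence primitive: by Lemma~\ref{primJmodJnormgen} together with the noncommutative principal ideal theorem, every primitive quotient of $A/J$ is generated by at most two normal elements and therefore has height at most $2$ (the lists in~\eqref{maxgen} have at most two entries); so if $P$ were not maximal in its stratum we could choose $P'\in\prim_J A$ with $P\subsetneq P'$ and get $\hgt(P'/J)\ge\hgt(P/J)+1\ge 3$, a contradiction. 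Thus $P$ is primitive and $P/J$ is generated by the regular normal sequence from~\eqref{maxgen}, via Lemma~\ref{primJmodJnormgen} again.

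I expect the only step requiring real argument (rather than a table check against Figure~\ref{HprimesOqM2}, \eqref{ZOqM2J}, and~\eqref{maxgen}) to be this last deduction of primitivity from $\hgt(P/J)\ge 2$, which rests on the uniform height-$\le 2$ bound for primitive quotients. The case $J=\Delta$ also merits a word of care, since the UFD argument is unavailable there, but the one-dimensionality of $Z(A_\Delta)$ removes the difficulty.
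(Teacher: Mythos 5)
Your proposal is correct and takes essentially the same route as the paper: reduce to the first statement via Lemma \ref{partHufdOqM2}(c), show for $J \ne bc,0$ that any $P \ne J$ in the stratum is primitive and apply Lemma \ref{primJmodJnormgen}, and for $J = bc,0$ combine the UFD property (height $1$) with the height-$\le 2$ bound forcing $P \in \prim_J A$ (height $2$). The only cosmetic difference is that you justify primitivity in the small strata via the Krull dimension of $Z(A_J)$ and Theorem \ref{strat.thm}, whereas the paper reads the bound $\hgt(P'/J) \le 1$ directly off the one-element lists of Lemma \ref{primJmodJnormgen}; both are equally valid.
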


\begin{proof} Only the first statement needs to be proved, since $J$ is generated by a polynormal regular sequence
(Lemma \ref{partHufdOqM2}(c)). To prove the first statement, we may obviously assume that $P \ne J$.

First, assume that $J\ne bc,0$. In these cases, it follows from Lemma \ref{primJmodJnormgen} that $\hgt(P'/J) \le 1$
for all $P' \in \prim_J A$, and thus also for all $P' \in \spec_J A$ (since every element of $\spec_J A$ is contained
in an element of $\prim_J A$). The assumption $P \ne J$ then implies $P \in \prim_J A$, whence the lemma shows that
$P/J$ is generated by a normal element.

Now suppose that either $J=bc$ or $J=0$. In these cases, $A/J$ is a noncommutative UFD by Lemma \ref{partHufdOqM2}(b),
so if $P/J$ has height $1$, it must be generated by a normal element. From Lemma \ref{primJmodJnormgen}, we see that
$\hgt(P'/J) \le 2$ for all $P' \in \spec_J A$. Hence, if $\hgt(P/J) = 2$, then $P \in \prim_J A$, and the lemma
implies that $P/J$ is generated by a regular normal sequence. 
\end{proof}

Theorem \ref{secondP/Jnormgen} yields the same conclusions for $\Oq(M_2(k))$ that we obtained for $\OqSLth$ in Sections
\ref{oqsl3} and \ref{Homological}.

\begin{theorem}  \label{OqM2satconj} 
Let $A = \Oq(M_2(k))$, with  $q \in \kx$ not a root of unity and $k = \kbar$, and
let $H = (\kx)^4$ act rationally on $A$ in the standard fashion. Then both cases of Conjecture {\rm\ref{fJKgJKconj}}
hold. 
\end{theorem}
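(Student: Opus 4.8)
The plan is to argue exactly as for $\OqSLth$ in Theorem \ref{conjverifOqSL3}, pushing all of the real content into the normal-generation statement that has already been isolated as Theorem \ref{secondP/Jnormgen}. Concretely, I would: (i) use Proposition \ref{conjreduc} to convert Conjecture \ref{fJKgJKconj}, for each pair $J \subset K$ in $\Hspec A$, into the inclusion implication \eqref{inclwish}; (ii) feed Theorem \ref{secondP/Jnormgen} into Proposition \ref{normgen} to obtain \eqref{inclwish} for all relevant $Q$; and (iii) observe that this covers every pair $J \subset K$ among the fourteen $H$-primes displayed in Figure \ref{HprimesOqM2}.

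In more detail: fix $J \subset K$ in $\Hspec A$. By Proposition \ref{conjreduc}(c) it suffices to verify Conjecture \ref{fJKgJKconj} for $\phi^p_{JK}$ (equivalently $\phi^s_{JK}$), and by Proposition \ref{conjreduc}(a),(b) this reduces to checking that for every $P \in \spec_J A$ and every $Q \in \spec_K A$ one has $(P/J) \cap Z_{JK}\cdot \calE_{JK} \subseteq Q/J \implies P \subseteq Q$. Now Theorem \ref{secondP/Jnormgen} asserts that $P/J$ is generated by a regular normal sequence in $A/J$, hence in particular by a set of normal elements of $A/J$, so Proposition \ref{normgen} yields exactly this implication for all $Q \in \spec_K A$. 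Since $J \subset K$ were arbitrary, both cases of Conjecture \ref{fJKgJKconj} hold, which is the assertion of the theorem.

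The place where one might expect trouble — and where the genuinely nontrivial input sits — is the stratum over $J = \Delta$, since $A/\langle\Delta\rangle$ is not a noncommutative UFD (Lemma \ref{partHufdOqM2}(a),(b)) and, as noted in the introduction, has height-$1$ primes that are not normally generated. But those exceptional primes are precisely the $H$-primes $ab$, $bd$, $ac$, $cd$, which lie in strictly larger $H$-strata; the primes actually occurring in $\spec_\Delta A$ other than $\Delta$ itself are the primitive ones of the form $\langle b - \lambda c,\, \Delta\rangle/\langle\Delta\rangle$, normally generated by $b - \lambda c$ via Lemma \ref{primJmodJnormgen}. A similar remark applies to the two-dimensional strata over $J = bc$ and $J = 0$, where Lemma \ref{partHufdOqM2}(b) supplies the UFD property for height-$1$ primes and Lemma \ref{primJmodJnormgen} bounds the heights of primitive ideals by $2$. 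Thus this obstacle has already been absorbed into Theorem \ref{secondP/Jnormgen}, and no further argument beyond the reduction above is required.
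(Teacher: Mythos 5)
Your proposal is correct and follows exactly the route the paper intends: Theorem \ref{OqM2satconj} is stated as an immediate consequence of Theorem \ref{secondP/Jnormgen} combined with Propositions \ref{normgen} and \ref{conjreduc}, just as Theorem \ref{conjverifOqSL3} was deduced from Theorem \ref{P/Jnormgen} and the same two propositions. Your additional remarks about the $J=\Delta$, $J=bc$, and $J=0$ strata are accurate but not needed, since those concerns are already absorbed into Theorem \ref{secondP/Jnormgen}.
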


\begin{theorem}  \label{AGCMfactorsOqM2} 
Let $A = \Oq(M_2(k))$, with  $q \in \kx$ not a root of unity and $k = \kbar$.
Then all prime factor algebras of $A$ are Auslander-Gorenstein and GK-Cohen-Macaulay. 
\end{theorem}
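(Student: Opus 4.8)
The plan is to follow verbatim the strategy used for $\OqSLth$ in Theorem~\ref{homolconseq}. The first step is to invoke Theorem~\ref{secondP/Jnormgen}: every prime ideal $P$ of $A = \Oq(M_2(k))$ is generated by a polynormal regular sequence. The second step is to record that $A$ itself is Auslander-regular and GK-Cohen-Macaulay; this is standard for the quantized coordinate ring of $2\times 2$ matrices at a non-root of unity (cf.\ the analog of \cite[Proposition I.9.12]{BG}, or the fact that $A$ is an iterated Ore extension of the required type).

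Granting these two facts, the theorem follows from the standard homological transfer principle: if $R$ is Auslander-regular and GK-Cohen-Macaulay and $I$ is an ideal of $R$ generated by a polynormal regular sequence, then $R/I$ is Auslander-Gorenstein and GK-Cohen-Macaulay. This is exactly the content of \cite[Theorem~7.2]{primOqSL3}, which assembles the relevant results on how the Auslander and Cohen-Macaulay conditions behave under factoring out a normal regular element and then iterating. Applying it with $R = A$ and $I = P$ immediately gives the conclusion for every prime factor $A/P$.

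I do not expect any genuine obstacle here: the only delicate input is the normal generation of arbitrary primes of $A$, and that has already been established in Theorem~\ref{secondP/Jnormgen} (which in turn rests on the UFD property of all but one of the $H$-prime factors, Lemma~\ref{partHufdOqM2}, together with the explicit normal generators for the $P/J$ in Lemma~\ref{primJmodJnormgen}, and on the fact that each $H$-prime $J$ is itself polynormally generated, Lemma~\ref{partHufdOqM2}(c)). Accordingly, I anticipate the actual proof being a single short paragraph, essentially identical to that of Theorem~\ref{homolconseq}.
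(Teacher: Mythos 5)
Your proposal is correct and follows exactly the route the paper intends: the theorem is stated as an immediate consequence of Theorem \ref{secondP/Jnormgen} combined with the Auslander-regularity and GK-Cohen-Macaulay property of $A$ itself and the transfer result collected in \cite[Theorem 7.2]{primOqSL3}, mirroring the proof of Theorem \ref{homolconseq}. No gaps.
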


\begin{remark}  \label{ncphenom} 
The results above show that the algebra $A/\Delta$ is very nearly a noncommutative
UFD. First, as noted in the proof of Theorem \ref{secondP/Jnormgen}, it follows from Lemma \ref{primJmodJnormgen} that
for any $P \in \spec_\Delta A$ with $\hgt(P/\Delta) = 1$, the prime $P/\Delta$ is generated by a normal element. These
are the primes $\langle \Delta, \, b-\lambda c\rangle /\Delta$, for $\lambda \in \kx$. The only other height $1$
primes in $A/\Delta$ are the $H$-primes $ab/\Delta$, $bd/\Delta$, $ac/\Delta$, and $cd/\Delta$, and by Lemma
\ref{partHufdOqM2}(a), none of these is generated by a normal element.

Thus, $A/\Delta$ has infinitely many height $1$ primes, all but four of which are principal. This is a noncommutative
phenomenon, in view of a theorem of Bouvier \cite{Bouv} which states that in a (commutative) Krull domain, the set of
non-principal height $1$ primes is either empty or infinite. To see that $A/\Delta$ is an appropriate noncommutative
analog of a Krull domain, recall that normal (i.e., integrally closed) commutative notherian domains are Krull
domains, and that  the standard analog of normality for a noncommutative noetherian domain is the property of being a
maximal order in its division ring of fractions. That $A/\Delta$ is a maximal order is one case of a theorem of Rigal
\cite[Th\'eor\`eme 2.2.7]{Rig}. 
\end{remark}


\end{document}